\renewcommand{\thesubsection}{\thesection.\Alph{subsection}}
\newcommand{\Mdef}[2]{\newcommand{#1}{\relax \ifmmode #2 \else $#2$\fi}}
\newcommand{\cok}{\mathrm{cok}}
\newcommand{\sm }{\wedge}
\newcommand{\tensor}{\otimes}
\newcommand{\map}{\mathrm{map}}
\newcommand{\Hom}{\mathrm{Hom}}
\newcommand{\Tor}{\mathrm{Tor}}
\newcommand{\Ext}{\mathrm{Ext}}
\Mdef{\bhom}{\mathbf{\hat{H}om}}
\Mdef{\Mod}{\mathrm{mod}}
\newcommand{\st}{\; | \;}
\newtheorem{thm}{Theorem}[section]
\newtheorem{lemma}[thm]{Lemma}
\newtheorem{prop}[thm]{Proposition}
\newtheorem{cor}[thm]{Corollary}
\theoremstyle{definition}
\newtheorem{defn}[thm]{Definition}
\newtheorem{construction}[thm]{Construction}
\newtheorem{context}[thm]{Context}
\newtheorem{example}[thm]{Example}
\newtheorem{examples}[thm]{Examples}
\newtheorem{remark}[thm]{Remark}
\newcommand{\qqed}{\qed \\[1ex]}
\renewenvironment{proof}[1][\hspace*{-.8ex}]{\noindent {\bf Proof #1:\;}}{\qqed}
\Mdef{\PH} {\Phi^H}
\Mdef{\PK} {\Phi^K}
\Mdef{\PL} {\Phi^L}
\Mdef{\PT} {\Phi^{\T}}
\Mdef{\ef}{E{\cF}_+}
\Mdef{\etf}{\tilde{E}{\cF}}
\Mdef{\eg}{E{G}_+}
\Mdef{\etg}{\tilde{E}{G}}
\Mdef{\tf}{\T / \! \!  / {\cF}_+}
\Mdef{\infl}{\mathrm{inf}}
\Mdef{\defl}{\mathrm{def}}
\Mdef{\res}{\mathrm{res}}
\Mdef{\ind}{\mathrm{inf}}
\Mdef{\univ}{\mathcal{U}}
\newcommand{\CP}{\mathbb{C} P}
\Mdef{\Fp}{\mathbb{F}_p}
\Mdef{\Zpinfty}{\Z /p^{\infty}}
\Mdef{\Zpadic}{\Z_p^{\wedge}}
\newcommand{\bi}{\begin{itemize}}
\newcommand{\be}{\begin{enumerate}}
\newcommand{\bc}{\begin{center}}
\newcommand{\bd}{\begin{description}}
\newcommand{\ei}{\end{itemize}}
\newcommand{\ee}{\end{enumerate}}
\newcommand{\ec}{\end{center}}
\newcommand{\ed}{\end{description}}
\newcommand{\RLadjunction}[4]{
\diagram
#1:#2 \rrto<-0.7ex> &&
#3  \llto<-0.7ex> :#4 
\enddiagram}
\newcommand{\adjunction}[4]{
\diagram
#1:#2 \rrto<0.7ex> &&
#3  \llto<0.7ex> :#4 
\enddiagram}
\newcommand{\lra}{\longrightarrow}
\newcommand{\lla}{\longleftarrow}
\newcommand{\iso}{\cong}
\newcommand{\spec}{\mathrm{spec}}
\Mdef{\we}{\mathbf{we}}
\Mdef{\fib}{\mathbf{fib}}
\Mdef{\cof}{\mathbf{cof}}
\Mdef{\BI}{\mathcal{BI}}
\newcommand{\ann}{\mathrm{ann}}
\newcommand{\fibre}{\mathrm{fibre}}
\newcommand{\ilim}{\mathop{ \mathop{\mathrm{lim}} \limits_\leftarrow} \nolimits}
\newcommand{\colim}{\mathop{  \mathop{\mathrm {lim}} \limits_\rightarrow} \nolimits}
\newcommand{\holim}{\mathop{ \mathop{\mathrm {holim}} \limits_\leftarrow} \nolimits}
\newcommand{\hocolim}{\mathop{  \mathop{\mathrm {holim}}\limits_\rightarrow} \nolimits}
\newcommand{\sfD}{\mathsf{D}}
\newcommand{\Ho}{\mathrm{Ho}}
\Mdef{\A}{\mathbb{A}}
\Mdef{\B}{\mathbb{B}}
\Mdef{\C}{\mathbb{C}}
\Mdef{\D}{\mathbb{D}}
\Mdef{\E}{\mathbb{E}}
\Mdef{\T}{\mathbb{T}}
\Mdef{\F}{\mathbb{F}}
\Mdef{\G}{\mathbb{G}}
\Mdef{\I}{\mathbb{I}}
\Mdef{\N}{\mathbb{N}}
\Mdef{\Q}{\mathbb{Q}}
\Mdef{\R}{\mathbb{R}}
\Mdef{\bbS}{\mathbb{S}}
\Mdef{\Z}{\mathbb{Z}}
\Mdef{\bA}{\mathbb{A}}
\Mdef{\bB}{\mathbb{B}}
\Mdef{\bC}{\mathbb{C}}
\Mdef{\bD}{\mathbb{D}}
\Mdef{\bE}{\mathbb{E}}
\Mdef{\bF}{\mathbb{F}}
\Mdef{\bG}{\mathbb{G}}
\Mdef{\bH}{\mathbb{H}}
\Mdef{\bI}{\mathbb{I}}
\Mdef{\bJ}{\mathbb{J}}
\Mdef{\bK}{\mathbb{K}}
\Mdef{\bL}{\mathbb{L}}
\Mdef{\bM}{\mathbb{M}}
\Mdef{\bN}{\mathbb{N}}
\Mdef{\bO}{\mathbb{O}}
\Mdef{\bP}{\mathbb{P}}
\Mdef{\bQ}{\mathbb{Q}}
\Mdef{\bR}{\mathbb{R}}
\Mdef{\bS}{\mathbb{S}}
\Mdef{\bT}{\mathbb{T}}
\Mdef{\bU}{\mathbb{U}}
\Mdef{\bV}{\mathbb{V}}
\Mdef{\bW}{\mathbb{W}}
\Mdef{\bX}{\mathbb{X}}
\Mdef{\bY}{\mathbb{Y}}
\Mdef{\bZ}{\mathbb{Z}}
\Mdef{\cA}{\mathcal{A}}
\Mdef{\cB}{\mathcal{B}}
\Mdef{\cC}{\mathcal{C}}
\Mdef{\mcD}{\mathcal{D}} 
\Mdef{\cE}{\mathcal{E}}
\Mdef{\cF}{\mathcal{F}}
\Mdef{\cG}{\mathcal{G}}
\Mdef{\mcH}{\mathcal{H}} 
\Mdef{\cI}{\mathcal{I}}
\Mdef{\cJ}{\mathcal{J}}
\Mdef{\cK}{\mathcal{K}}
\Mdef{\mcL}{\mathcal{L}}
\Mdef{\cM}{\mathcal{M}}
\Mdef{\cN}{\mathcal{N}}
\Mdef{\cO}{\mathcal{O}}
\Mdef{\cP}{\mathcal{P}}
\Mdef{\cQ}{\mathcal{Q}}
\Mdef{\mcR}{\mathcal{R}}
\Mdef{\cS}{\mathcal{S}}
\Mdef{\cT}{\mathcal{T}}
\Mdef{\cU}{\mathcal{U}}
\Mdef{\cV}{\mathcal{V}}
\Mdef{\cW}{\mathcal{W}}
\Mdef{\cX}{\mathcal{X}}
\Mdef{\cY}{\mathcal{Y}}
\Mdef{\cZ}{\mathcal{Z}}
\Mdef{\tA}{\tilde{A}}
\Mdef{\tB}{\tilde{B}}
\Mdef{\tC}{\tilde{C}}
\Mdef{\tE}{\tilde{E}}
\Mdef{\tH}{\tilde{H}}
\Mdef{\tK}{\tilde{K}}
\Mdef{\tL}{\tilde{L}}
\Mdef{\tM}{\tilde{M}}
\Mdef{\tN}{\tilde{N}}
\Mdef{\tP}{\tilde{P}}
\Mdef{\Ab}{\overline{A}}
\Mdef{\Bb}{\overline{B}}
\Mdef{\Cb}{\overline{C}}
\Mdef{\Db}{\overline{D}}
\Mdef{\Hb}{\overline{H}}
\Mdef{\Ib}{\overline{I}}
\Mdef{\Kb}{\overline{K}}
\Mdef{\Lb}{\overline{L}}
\Mdef{\Mb}{\overline{M}}
\Mdef{\Nb}{\overline{N}}
\Mdef{\Qb}{\overline{Q}}
\Mdef{\Tb}{\overline{T}}
\Mdef{\qb}{\overline{q}}
\Mdef{\rb}{\overline{r}}
\Mdef{\tb}{\overline{t}}
\Mdef{\ub}{\overline{u}}
\Mdef{\vb}{\overline{v}}
\Mdef{\hc}{\hat{c}}
\Mdef{\he}{\hat{e}}
\Mdef{\hf}{\hat{f}}
\Mdef{\hA}{\hat{A}}
\Mdef{\hH}{\hat{H}}
\Mdef{\hJ}{\hat{J}}
\Mdef{\hM}{\hat{M}}
\Mdef{\hP}{\hat{P}}
\Mdef{\hQ}{\hat{Q}}
\Mdef{\bolda}{\mathbf{a}}
\Mdef{\boldb}{\mathbf{b}}
\Mdef{\boldD}{\mathbf{D}}
\Mdef{\fm}{\frak{m}}
\Mdef{\fX}{\frak{X}}
\Mdef{\eps}{\epsilon}
\newcommand{\tand}{\mbox{ and }}
\newcommand{\sE}{\cE}
\newcommand{\epz}{\varepsilon}
\newcommand{\sset}[1]{#1}
\newcommand{\SI}{\Sigma}
\newcommand{\htp}{\simeq}
\newcommand{\FF}{\Bbb{F}}
\newcommand{\darrow}{\longrightarrow}
\newcommand{\AL}{\mbox{\boldmath $\alpha$}} 
\newcommand{\FK}[1]{K^{\bullet}_{\infty}( #1)} 
\newcommand{\HFK}[1]{K_{\infty}( #1)} 
\newcommand{\PPK}[1]{PK_{\infty}^{\bullet}( #1)} 
\newcommand{\FC}[1]{\check{C}^{\bullet}( #1)} 
\newcommand{\PC}[1]{P\check{C}^{\bullet}( #1)}
\newcommand{\CJI}{\check{C}(I)}
\newcommand{\CIM}{M[I^{-1}]}
\newcommand{\boldR}{R}
\newcommand{\boldk}{k}
\newcommand{\HombR}{\Hom_{\boldR}}
\newcommand{\HomR}{\Hom_{R}}
\newcommand{\kvee}{k^{\vee}}
\newcommand{\tensorcE}{\tensor_\cE}
\newcommand{\tensorR}{\tensor_R}
\newcommand{\modcE}{\mathrm{mod}\!-\!\cE}
\newcommand{\bRmod}{\boldR\!-\!\mathrm{mod}}
\newcommand{\cell}{\mathrm{Cell}}
\newcommand{\Gk}{\Gamma_k}
\newcommand{\Lk}{\Lambda^k}
\newcommand{\Rmod}{\mbox{$R$-mod}}
\newcommand{\End}{\mathrm{End}}
\newcommand{\HomE}{\mathrm{Hom}_{\cE}}
\newcommand{\Homk}{\mathrm{Hom}_k}
\newcommand{\sdr}{\rtimes}
\newcommand{\tensorE}{\otimes_{\cE}}
\newcommand{\Kinfty}{K_{\infty}}
\newcommand{\Ep}{E'}
\newcommand{\ksharp}{k^{\#}}
\newcommand{\depth}{\mathrm{depth}}
\newcommand{\AR}{R}
\newcommand{\un}{\mathrm{un}}
\newcommand{\Kbar}{\overline{K}}
\newcommand{\Fbar}{\overline{F}}
\newcommand{\builds}{\vdash}
\newcommand{\finbuilds}{\models}
\newcommand{\finbuiltby}{=\!\!|}
\newcommand{\shift}{\mathrm{shift}}
\newcommand{\Ftwo}{{\mathbb{F}}_{2}}
\newcommand{\Mt}{\tilde{M}}
\newcommand{\fn}{\mathfrak{n}}
\newcommand{\fa}{\mathfrak{a}}
\newcommand{\BC}{BC}
\newcommand{\ABC}{ABC}
\newcommand{\bbF}{\mathbb{F}}
\newcommand{\uFp}{\underline{\mathbb{F}}_p}
\newcommand{\lc}{H_{\fm}}
\title{Homotopy invariant commutative algebra over fields}
\author{J.P.C. Greenlees}
\address{The University of Sheffield, Sheffield, S3 7RH\, UK} 
\email{j.greenlees@sheffield.ac.uk}
\begin{document}

\baselineskip=15pt

\newcommand{\blankbox}[2]{%
  \parbox{\columnwidth}{\centering
    \setlength{\fboxsep}{0pt}%
    \fbox{\raisebox{0pt}[#2]{\hspace{#1}}}%
  }%
}

\begin{abstract}
This article grew out of lectures given in the programme `Interactions
between Representation Theory, Algebraic Topology and Commutative
Algebra' (IRTATCA) at the CRM (Barcelona) in Spring 2015. They give
some basic homotopy invariant definitions in commutative algebra
and illustrate their interest by giving a number of examples. 
\end{abstract}

\thanks{I am grateful to the CRM and the Simons Foundation for the opportunity to spend time
in the stimulating environment at the CRM during the IRTATCA programme, and to give these lectures. }
\maketitle

\tableofcontents

\part{Context}
\section{Introduction}

\subsection{The lectures}
The purpose of these lectures is to illustrate how powerful it can be
to formulate ideas of commutative algebra in a homotopy invariant
form.  The point is  that it shows the concepts are robust, in the sense that they are 
invariant under deformations. They can then be applied to derived
categories of rings, but of course derived categories of rings are
just one example of a homotopy category of a stable model category. 
In short, these ideas are powerful in classical algebra, in representation theory of groups, in
classical algebraic topology and elsewhere. 
We will focus on the fact that this includes many striking new examples from topology. 

Indeed, our principal example will be that arising from  a topological
space $X$, which is to say we will work over  the ring $R=C^*(X;k)$ of
cochains on a space $X$ with coefficients in a field $k$. Of course
this is a rather old idea, applied in rational homotopy theory to give
a very rich theory and many interesting examples. The key is that $R$
needs to be commutative: classically, one needs  to work over the rationals so that there is a commutative model for cochains (coming from piecewise 
polynomial differential forms). It is well known that there is no natural model for $C^*(X;k)$ which is a commutative differential graded algebra (commutative DGA)  if 
$k$ is not of characteristic 0, since the Steenrod operations (which
do not usually vanish) are built out of the
non-commutativity. However, there is a way round this, since we can
relax our requirements for what we consider a model. Instead of
requiring DGAs,  we permit $R$ to be a commutative ring spectrum in
the sense of homotopy theory (see Section \ref{sec:constructspectra}
for a brief introduction). There is then a commutative 
model for cochains; we will continue to use the notation $C^*(X;k)$ even though it is now a ring spectrum rather than 
a DGA.

This may be our principal example, but actually it is a very
restricted one. It is like restricting graded commutative rings to be
negatively graded $k$-algebras. We will hint at a number of  examples coming from derived algebraic geometry. 

The main point of the lectures is to give some interesting examples,
and the main source of examples will be modular representation theory
and group cohomology. In fact, the focus will be on examples picked out by 
requiring the ring spectrum to be well behaved in terms of commutative
algebra: we look for regular, complete intersection or Gorenstein ring
spectra. As one expects from commutative algebra, regular ring spectra
are very restricted and can sometimes even be classified. One also
expects to be able to parametrise complete intersections in some
sense, although our present account just mentions a few isolated examples.
Finally, just as is the case for conventional commutative  rings, the class of Gorenstein ring spectra has proved
ubiquitous. We shall see examples from representation theory, from chromatic stable homotopy theory and from rational homotopy theory, and one of the distinctive contributions is to emphasize duality. 

\subsection{54321} It may be worth bearing in mind the following
correspondences, which suggest useful ways to think about the material
here. 

\vspace{1ex}
{\bf The pentagon:} We may summarize  sources  examples and
the relationship between them in a diagram. The emphasis here will be
on commutative algebra and spaces. 
$$\diagram
&\ar@{.}[dl]_{DAG}\mbox{Commutative Algebra}&\\
\mbox{Chromatic homotopy theory}\ar@{.}[d]&&\mbox{Spaces and DGAs over $\Q$}\ar@{_{(}->}[ul]\ar@{^{(}->}[d]\\
\mbox{Representation theory of $G$}\ar@{^{(}->}[rr]
&&\mbox{Topological spaces}
\enddiagram$$

\vspace{1ex}
{\bf The square:} It is worth bearing in mind the dual points of view
of geometric objects and rings of functions on them, and the way the
global picture is built up out of points.  Our present
account focuses on the local rings at  the bottom right, but it can
be illuminating to consider the wider context.  
$$\diagram
&\mbox{Geometry}&\mbox{Algebra}\\
\mbox{Global}& \fX \ar@{<->}[r] &\cO_{\fX}\\
\mbox{Local}& \fX_x \ar@{<->}[r] \ar@{^{(}->}[u]
&\cO_{\fX,x} \ar@{^{(}->}[u]\\
&X&C^*(X;k)
\enddiagram$$

\vspace{1ex}
{\bf The three classes:} our focus is on the  following hierarchy of local
rings. 
$$\diagram
\mbox{Regular}\ar@{^{(}->}[r]&\mbox{Complete Intersection}\ar@{^{(}->}[r]&\mbox{Gorenstein}
\enddiagram$$

\vspace{1ex}
{\bf The dual pair:} even though our focus is on commutative rings, we
make constant use of the freedom to use Morita theory to transpose
properties into the non-commutative context. 
$$\diagram
\mbox{Commutative}\ar@{<->}[r]&\mbox{Non-commutative}
\enddiagram$$

\vspace{1ex}

\noindent
It is the purpose of the present article to give an account of these
objects and 
points of view  as a
\begin{center}
{\bf unified whole.} 
\end{center}

\subsection{Organization}
The  article is organized into five parts. In Part 1 we introduce
the context of ring spectra and modules over them, together with our
three main classes of examples (commutative rings, spaces and
representation theory) and the basic finiteness conditions. 

In Part 2 (Sections \ref{sec:Morita} to \ref{sec:exterior})
we describe the basic Morita equivalences between commutative and
non-commutative contexts, discuss our fundamental finiteness condition 
and illustrate its power by considering the richness of exterior algebras. 

We then turn to our hierarchy of local rings. In Part 3 (Sections
\ref{sec:regular} and \ref{sec:normalizable}) we consider
regular ring spectra; the principle of considering definitions in three
styles (structural, growth or module theoretic) recurs later, and the 
good behaviour of modules over regular rings is used later to give
a definition of `finitely generated' modules for many ring spectra.

 In Part 4 (Sections \ref{sec:ci} to \ref{sec:zhyperisghyper}) 
we consider hypersurface ring spectra. We spend a little time
recalling the  classical algebraic theory before introducing the
definitions for ring spectra, illustrating it with examples and explaining how 
the different versions are related.

Finally in Part 5 (Sections \ref{sec:Gor} to \ref{sec:Goregs}) we
consider Gorenstein ring spectra, and the duality properties they
often enjoy. We describe the basic tools for Gorenstein ring spectra
and illustrate them by explaining many known examples (especially
those from representation theory and rational homotopy theory). 
The rich and  attractive dualities that appear make this a highlight. 

A series of appendices recalls notation and basic properties of 
local cohomology and derived completion.

Each part starts with a  more detailed description of its contents.

\subsection{Sources}
Much of this account comes from joint work \cite{DGI, tec, cazci,
  pzci, qzci, THHGor} and other conversations with D.J.Benson,
W.G.Dwyer, K.Hess, S.B. Iyengar and  S. Shamir, and I would like to thank them for their collaboration and  inspiration. 

We summarize some relevant background, but commutative algebraists may find the articles \cite{spectra} 
and  \cite{firststeps} from the 2004 Chicago Workshop useful. General
background in topology may come from \cite{MayConcise} and in conventional commutative algebra
from \cite{Matsumura, BrunsHerzog}.

It is worth emphasizing that this is a vast area, and these lectures
will just focus on a tiny part corresponding to local $k$-algebras. 
 In particular we will not touch on arithmetic or
geometric aspects of the theory, such as the Galois theory of
J. Rognes, Brauer groups, or derived algebraic geometry. 

\subsection{Conventions}
We'll generally denote rings and ring spectra by letters like, $Q, R$ and $S$. 
Ring homomorphisms will generally go in reverse alphabetical order, as in 
$S\lra R \lra Q$. Modules will be denoted by letters like $L,M,N, \ldots$.

The ring of integers (initial amongst conventional rings)  is denoted $\Z$. 
The sphere spectrum (initial amongst ring spectra)  is denoted $\bbS$. 

After the inital sections we will use the same notation for a conventional ring
and the associated Eilenberg-MacLane spectrum. Similarly for modules. 

Generally, $M\tensor_{R} N$ will denote the left derived tensor product, and 
$\Hom_R(M,N)$ the module of derived homomorphisms. Similarly, fibre sequences, 
cofibre sequences, pullbacks and pushouts will be derived.

\subsection{Grading conventions.}
We will have cause to discuss homological and cohomological gradings.
Our experience is that this is a frequent source of confusion, so we adopt
the following conventions. First, we refer to lower gradings as {\em degrees}
and upper gradings as {\em codegrees}. As usual, one may convert gradings to
cogradings via the rule $M_n=M^{-n}$.
Thus both chain complexes and cochain
complexes have differentials of degree $-1$ (which is to say, of codegree $+1$).
This much is standard.  However, since we need to deal with both homology
and cohomology it is essential to have separate notation for
homological suspensions ($\Sigma^i$) and  cohomological suspensions
($\Sigma_i$): these are defined by
$$(\Sigma^iM)_n=M_{n-i} \mbox{ and } (\Sigma_iM)^n=M^{n-i}.$$
Thus, for example, with reduced chains and cochains  of a based
space $X$,  we have
$$C_*(\Sigma^iX)=\Sigma^iC_*(X) \mbox{ and } C^*(\Sigma^iX)=\Sigma_iC^*(X).$$

\section{Ring and module spectra I (motivation)}
\label{sec:ringspectra1}
Most of the generalities can take place in any category with suitable formal structure: we need a symmetric monoidal 
product so that we can discuss rings and modules over them, and we
need to be able to form tensor products over a commutative ring, 
and we will want a  homotopy theory which is stable so that we can form a triangulated homotopy 
category. In fact we will work with a category of spectra in the sense of homotopy theory; the construction is 
sketched in Appendix \ref{sec:constructspectra}, and the present section gives a brief orientation. A more extensive introduction 
designed for commutative algebraists is given in \cite{spectra}. 

\subsection{Additive structure}
We may start from the observation  that the homotopy theory of highly connected spaces is simpler than that
of general spaces. By 
suspending a space we may steadily simplify the homotopy theory, but because cohomology theories 
have suspension isomorphisms, we do not lose any additive cohomological information: spectra capture 
the limit of this process.  Thus spectra
are  a sort of abelianization of spaces where behaviour has a more algebraic formal flavour. Associated 
to any based space $X$ there is a suspension spectrum, and arbitrary spectra can be built from those of 
this form. The other  important source of spectra is as the representing objects for cohomology theories. 
If $E^*(X)$ is a  contravariant functor of a based space $X$ which satisfies the Eilenberg-Steenrod axioms, then 
there is a spectrum $E$ so that the equation 
$$E^*(X)=[X,E]^*$$
holds. This has the usual benefits that one can then apply geometric constructions to cohomology theories and one can argue more easily by universal examples. 

\subsection{Multiplicative structure}
Having taken the step of representing cohomology theories by spectra,  one may ask if good formal behaviour of the functor $E^*(\cdot) $ is reflected in the 
representing spectrum $E$. For our purposes the most important piece of structure is that of being a commutative
ring, and we would like to say that a cohomology theory whose values on spaces are commutative rings
is represented by a spectrum which is a commutative ring in the category of spectra. This is true remarkably often. 

In order to do homotopy theory we need a  Quillen model structure on the category of spectra,  
and to have commutative rings in this setting
we need a symmetric monoidal smash product,  with unit the sphere
spectrum $\bbS$ (the suspension spectrum of the two point space
$S^0$),  so that the two structures are compatible in a way elucidated by 
Schwede and Shipley \cite{SchwedeShipley}. In retrospect
it seems strange that such models were not constructed until the
1990s, but several such models are  now known, and they give
equivalent theories. We sketch the construction of symmetric spectra
in Section  \ref{sec:constructspectra}. 

In this context, it makes sense to ask for a cohomology theory to be
represented by a commutative ring spectrum $R$ (i.e., $R$ comes with a
multiplication map $\mu: R \sm R \lra R$ and a unit map $\bbS \lra R$
 making $R$ into a commutative monoid). Many of 
the important examples do have this structure. The most
obvious example from this point of view is the sphere spectrum $\bbS$
itself. 
This is the initial ring in the category of spectra, and the smash product plays the role of tensor product over $\bbS$.  We will describe a number of examples below, but for the present we continue with the general formalism just assuming 
that $R$ is a commutative ring spectrum. 

\subsection{Modules}
We may consider module spectra over $R$, and there is a model category
structure on $R$-modules; furthermore, since $R$ is commutative, there is a tensor product of $R$-modules
formed in the usual way from the tensor product over the initial ring $\bbS$ (i.e., as the coequalizer of the two maps
$M\tensor_{\bbS} R\tensor_{\bbS} N \lra M\tensor_{\bbS} N$). From the good formal properties of the original category, this category 
of $R$-modules is again a model category with a compatible symmetric monoidal product. This  has an associated 
homotopy category $\Ho(R-mod)$ and will be the context in which we work. 

\subsection{Reverse approach}
Commutative algebraists may approach spectra from the algebraic direction. Traditional commutative algebra considers commutative rings $A$ and modules over them, but some constructions make it natural to extend
further  to considering chain complexes of $A$-modules; the need to 
consider robust, homotopy invariant properties leads to the derived 
category $\sfD(A)$. Once we admit chain complexes, it is natural to 
consider the corresponding multiplicative objects, differential graded algebras (DGAs).
Although it may appear inevitable, the real justification for this 
process of generalization is the array  of naturally occurring examples. 

The use of spectra is a natural extension of this process. Shipley has
shown \cite{Shipley} that associated to any 
 DGA $A$ there is a ring spectrum $HA$, so that the derived 
 category $\sfD (A)$ is equivalent to the homotopy category of $HA$-module
 spectra. Accordingly we can view ring spectra
as generalizations of DGAs and categories of module spectra as 
flexible generalizations of the derived category.
Ring spectra extend the notion of rings, module spectra extend the notion
of (chain complexes of) modules over a ring, and the homotopy category of module spectra extends the
 derived category. 
Many ring theoretic constructions extend to ring spectra, 
and thus extend the power of commutative algebra to a vast new supply of naturally
occurring examples. Even for traditional rings, the new perspective is
often enlightening, and thinking in terms of spectra makes
 a number of new tools available. Once again the only compelling justification 
for this inexorable process of generalization  is the array of interesting,  naturally occurring 
examples, some of which will be described later in these lectures. 

\section{Ring and module spectra II (construction)}
\label{sec:constructspectra}

The purpose of this section  is to outline the construction of symmetric  spectra. 
The details will not be needed for the lectures, and  readers familiar
with spectra can comfortably omit it. The point is to reassure readers
that spectra are rather concrete objects. 

\subsection{Naive spectra}
\label{subsec:naive}
This subsection is designed to explain the idea behind spectra: where
they came from and why they were invented. Those already familiar
spectra should skip directly to Subsection \ref{subsec:symmetric},
which describes symmetric spectra.

The underlying idea is that spectra are just stabilised spaces and
the bonus is that they represent cohomology theories. This definition 
is perfectly good for additive issues, but it does not have a symmetric monoidal smash product, 
so is not adequate for commutative algebra. 

\begin{defn}
\label{defn:spectra}
A {\it spectrum\/} $E$ is a sequence of based spaces $E_k$ for  $k\ge 0$ 
together with structure maps
$$
\sigma\colon \Sigma E_k \to E_{k+1}, 
$$
where $\Sigma E_k=S^1\sm E_k$ is the topological suspension. 
A map of spectra $f\colon E\to F$ is a sequence of maps so that the
squares
$$
\begin{array}{ccc}
\Sigma E_k  &\stackrel{\Sigma f_k}\longrightarrow &  \Sigma F_k \\
\downarrow && \downarrow \\
E_{k+1}    &\stackrel{f_{k+1}}\longrightarrow   &   F_{k+1}
\end{array}
$$
commute for all $k$.
\end{defn}


\begin{example}
If $X$ is a based space we may define the suspension spectrum 
$\Sigma^\infty X$ to have $k$th term $\Sigma^k X$ with the structure maps
being the identity.
\vskip .1in

\noindent
{\bf Remark:}
It is possible to make a definition of homotopy immediately, but this does not 
work very well for arbitrary spectra. Nonetheless it will turn out that for 
finite CW-complexes $K$, maps out of a suspension  spectrum can be
easily described. We reserve $[\cdot, \cdot]$ for homotopy classes of
maps between spectra (sometimes called `stable maps'), so we write
$[A,B]_{\un}$  for homotopy classes of (unstable) maps from a based space $A$ to
a based space $B$. With this notation, we have   
$$
[\Sigma^\infty K,E]=\colim_{ k} [\Sigma^kK,E_k]_{\un}.
$$
In particular
$$
\pi_n(E):=[\Sigma^\infty S^n,E]=\colim_{ k} [S^{n+k},E_k]_{\un}.
$$
For example if $E=\Sigma^\infty L$ for a based space $L$, 
we obtain a formula for  the {\em stable homotopy groups} of $L$
$$
\pi_n(\Sigma^\infty L)=\colim_{ k} [\Sigma^kS^n,\Sigma^kL]_{\un}, 
$$
 By the Freudenthal suspension theorem, this is the common 
stable value of the groups  $[\Sigma^kS^n,\Sigma^kL]_{\un}$ for large $k$.
Thus spectra have captured stable homotopy groups.
\end{example}

\begin{construction}
We can suspend spectra by any integer $r$, defining $\Sigma^rE$ by
$$
(\Sigma^rE)_k= \begin{cases}
E_{k-r} \quad & k-r\ge0 \\
pt \quad & k-r <0.
\end{cases}
$$

Notice that if we ignore the first few terms,  $\Sigma^r$ is inverse to $\Sigma^{-r}$.
Homotopy groups involve a direct limit and therefore do not see these first few
terms. Accordingly,   once we invert homotopy isomorphisms,  the suspension functor becomes
an equivalence of categories. Because suspension is an equivalence,  we say that 
 we have a {\em stable} category.

\end{construction}

\vskip .1in

\begin{example} {\em (Sphere spectra)} We write
$\bbS =\Sigma^{\infty}S^0$ for the 0-sphere because of its special role, and
then for an arbitrary integer $r$ we define
$$
S^r=\Sigma^r \bbS  .
$$
Note that $S^r$ now has meaning for a space and a spectrum for $r \geq 0$, 
but since we have an isomorphism $S^r\cong \Sigma^\infty S^r$ of spectra
for $r\ge0$ the ambiguity is not important.
We extend this ambiguity, by often suppressing $\Sigma^\infty$.
\end{example}

\begin{example}
{\it Eilenberg-MacLane spectra\/}.
An Eilenberg-MacLane  space of type $(M,k)$ for an abelian group $M$ and $k \geq 0$
is  a CW-complex $K(M,k)$ with $\pi_k(K(M,k))=M$ and $\pi_n(K(M,k))=0$ for 
$n\neq k$; any two such spaces are homotopy equivalent.
It is well known that in each degree ordinary cohomology is
 represented by an Eilenberg-MacLane space. Indeed, for any CW-complex $X$,  we have 
$ H^k(X;)M=[X,K(M,k)]_{\un}$. In fact,   this sequence of spaces, as $k$ varies,  
assembles to make a spectrum.

To describe this, first note that the suspension functor $\Sigma$  is defined
 by smashing with the circle $S^1$, so it is left adjoint to the loop 
functor $\Omega$ defined by $\Omega X := \map (S^1,X)$ (based loops
with the compact-open topology). In fact there is a homeomorphism
$$
\map(\Sigma W,X)=\map (W \sm S^1, X) \cong \map (W , \map (S^1,X))=\map (W , \Omega X)
$$
 This passes to homotopy, so  looping shifts homotopy
in the sense that $\pi_n(\Omega X)=\pi_{n+1}(X)$. We conclude that there
is a homotopy equivalence
$$
\tilde{\sigma}: K(M,k)\overset{\simeq}{\to} \Omega K(M,k+1) , 
$$
and hence we may obtain a spectrum 
$$
HM=\{K(M,k)\}_{k \geq 0} 
$$
where the bonding map 
$$
\sigma\colon\Sigma K(M,k)\to K(M,k+1) 
$$
is adjoint to $\tilde\sigma$. Thus we find
$$
[\Sigma^r \Sigma^\infty  X, HM]= 
\colim_k [\Sigma^r\Sigma^kX, K(M,k)]_{\un}=
\colim_k  H^k(\Sigma^r \Sigma^kX;M)= H^{-r}(X;M).
$$
In particular the Eilenberg-MacLane spectrum has homotopy in a single degree like the 
spaces from which it was built: 
$$\pi_k(HM)= \begin{cases}
M \quad k=0 \\
0 \quad k\ne 0. \end{cases}$$
\end{example}

\subsection{Symmetric spectra \cite{HSS, Schwedesymmetric}}
\label{subsec:symmetric}

Symmetric spectra give an elementary and combinatorial construction of
a symmetric monoidal category of spectra. This is excellent for an
immediate access to the formal properties, but to be able to calculate
with symmetric spectra and to relate them to the rest of homotopy
theory one needs to understand  the construction of the {\em homotopy}
category. This is somewhat indirect, and Subsection \ref{subsec:naive}
was intended as a motivational substitute. 
 
It is usual to give a fully combinatorial construction of  symmetric spectra, by 
basing them  on simplicial sets rather than on  topological spaces.

\begin{defn}
(a)  A {\it symmetric sequence\/} is a sequence
$$
E_0,E_1,E_2,\dots,
$$
of pointed simplicial sets with basepoint
 preserving action of the symmetric group $\Sigma_n$ on $E_n$.

\vskip .1in

\noindent
(b) We may define a tensor product $E \tensor F$ of symmetric sequences $E$ and
$F$ by  
$$
(E\otimes F)_n:= \bigvee_{p+q=n} (\Sigma_{n})_+ \wedge_{\Sigma_p\times \Sigma_q} (X_p\wedge Y_q), 
$$
where the subscript $+$ indicates the addition of a disjoint basepoint.
\end{defn}

It is elementary to check that this has the required formal behaviour.

\begin{lemma}
The product $\otimes$ is symmetric monoidal with unit
$$
(S^0,*,*,*,\dots).
$$
\end{lemma}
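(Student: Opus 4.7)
The plan is to write down the unitor, associator, and symmetry isomorphisms in explicit block form and then verify the coherence axioms. The guiding remark I would state at the outset is that a symmetric sequence is the same as a functor from the groupoid $\Sigma = \coprod_n B\Sigma_n$ of finite sets and bijections to pointed simplicial sets, and the tensor product $\otimes$ defined above is exactly Day convolution for $(\Sigma, \sqcup)$; so in principle the symmetric monoidal structure on $(\Sigma, \sqcup)$ is inherited by the convolution. This reduces the problem to a combinatorial check involving cosets and shuffles, which I would carry out by hand.

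Writing $\mathbf{1} = (S^0, *, *, \ldots)$ for the candidate unit, the only nonzero summand of $(E \otimes \mathbf{1})_n$ comes from $(p,q) = (n,0)$, and since $\Sigma_0$ is trivial and $E_n \wedge S^0 = E_n$, this summand collapses naturally to $E_n$ with its $\Sigma_n$-action; the left unitor is identical. For the associator I would compute both $(E \otimes F) \otimes G$ and $E \otimes (F \otimes G)$ directly and identify each with
$$\bigvee_{p+q+r=n} (\Sigma_n)_+ \wedge_{\Sigma_p \times \Sigma_q \times \Sigma_r} (E_p \wedge F_q \wedge G_r),$$
using the transitivity isomorphism $(\Sigma_n)_+ \wedge_{\Sigma_m \times \Sigma_r} ((\Sigma_m)_+ \wedge_{\Sigma_p \times \Sigma_q} X \wedge Y) \cong (\Sigma_n)_+ \wedge_{\Sigma_p \times \Sigma_q \times \Sigma_r} (X \wedge Y)$ for induced pointed $\Sigma_n$-sets.

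The symmetry $\tau_{E,F}$ will send the $(p,q)$-summand to the $(q,p)$-summand via the twist $E_p \wedge F_q \cong F_q \wedge E_p$ on spaces, composed with the shuffle permutation $\chi_{p,q} \in \Sigma_{p+q}$ that interchanges the first $p$ and last $q$ letters; this absorbs the discrepancy between induction from $\Sigma_p \times \Sigma_q$ and from $\Sigma_q \times \Sigma_p$. The pentagon then commutes on the nose after both sides are identified with the triple decomposition above, and the unit triangles are similarly immediate. The main obstacle I expect is the hexagon for $\tau$: per summand indexed by $(p,q,r)$ it reduces to the shuffle identity $\chi_{p+q,r} = (\chi_{p,r} \times 1_q) \circ (1_p \times \chi_{q,r})$ in $\Sigma_{p+q+r}$ together with its mirror, a standard but fiddly block-permutation calculation which, once verified, makes all remaining coherence checks formal.
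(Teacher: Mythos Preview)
Your proposal is correct and is exactly the standard verification; the paper does not actually prove this lemma but simply prefaces it with ``It is elementary to check that this has the required formal behaviour'' and leaves the details to the reader. Your outline fills in precisely those details, and the shuffle identity you isolate for the hexagon is the right one.
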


\begin{example} The sphere is the symmetric sequence 
$\bbS:=(S^0,S^1,S^2,\dots)$. Here  $S^1=\Delta^1/ \Delta^1 $ is the simplicial circle 
and the higher simplicial spheres are defined by taking smash powers, 
so that  $S^n=(S^1)^{\wedge n};$ this also explains the actions of the symmetric groups.

It is elementary to check that {\em the sphere  is a commutative monoid}  in the category of 
 symmetric sequences.
\end{example}

\begin{defn}
A {\it symmetric spectrum $E$\/} is a left $\bbS$-module in symmetric sequences.

Unwrapping the definition, we see that this means $E$ is given by 

(1) a sequence $E_0,E_1,E_2,\dots$ of simplicial sets, 

(2) maps $\sigma\colon S^1\wedge E_n\to E_{n+1}$, and 

(3) basepoint preserving  left actions of $\Sigma_n$ on $E_n$ which are 
compatible with the actions in the sense that the composite maps 
$S^p\wedge E_n \to E_{n+p}$ are $\Sigma_p \times \Sigma_n$ equivariant.
\end{defn}

\begin{defn}
The smash product of symmetric spectra
is
$$
\diagram
E \sm_{\bbS} F:=\mathrm{coeq} (E \otimes \bbS \otimes F \rto<0.4ex> \rto<-0.4ex>& E \otimes F).
\enddiagram  $$
\end{defn}

\begin{prop}
The tensor product over $\bbS$ is a symmetric monoidal product on the category of symmetric
spectra.
\end{prop}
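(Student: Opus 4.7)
The plan is to recognize this as an instance of a general categorical fact: whenever $(C,\otimes,I)$ is a cocomplete symmetric monoidal category in which $\otimes$ preserves colimits in each variable, every commutative monoid $R$ in $C$ gives rise to a symmetric monoidal category of left $R$-modules, whose tensor product is the reflexive coequalizer
\[
M\otimes_R N \;:=\;\mathrm{coeq}\bigl(M\otimes R\otimes N\rightrightarrows M\otimes N\bigr),
\]
with unit $R$. So the strategy is to verify the hypotheses for $C=$ symmetric sequences and $R=\bbS$, and then let the general machinery produce the symmetric monoidal structure.

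First I would note the cocompleteness and colimit-preservation. Symmetric sequences are pointed simplicial sets with $\Sigma_n$-actions, so all colimits are formed levelwise. The explicit formula $(E\otimes F)_n=\bigvee_{p+q=n}(\Sigma_n)_+\sm_{\Sigma_p\times\Sigma_q}(E_p\sm F_q)$ is built out of smash products and coproducts of pointed simplicial sets (suitably induced up along finite-index subgroup inclusions), each of which preserves all colimits in each variable; hence so does $\otimes$. The preceding lemma supplies that $\otimes$ is symmetric monoidal with unit $(S^0,*,*,\dots)$, and the preceding example supplies that $\bbS$ is a commutative monoid. These are exactly the hypotheses needed.

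With the hypotheses in place I would construct the structure on $E\sm_\bbS F$. The coequalizer defining $E\sm_\bbS F$ makes sense in symmetric sequences; applying $\bbS\otimes(-)$ to it (which preserves coequalizers) shows that the left $\bbS$-action on $E$ descends to a left $\bbS$-action on $E\sm_\bbS F$, so the coequalizer lives in symmetric spectra. Because $\bbS$ is commutative, the alternative left-action via $F$ (transported through the symmetry) agrees, which is what lets the symmetry isomorphism of $\otimes$ descend to a symmetry for $\sm_\bbS$. The unit isomorphism $\bbS\sm_\bbS E\cong E$ is exactly the statement that the action map $\bbS\otimes E\to E$ is the coequalizer of $\bbS\otimes\bbS\otimes E\rightrightarrows\bbS\otimes E$, which holds because $E$ is an $\bbS$-module. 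Associativity $(E\sm_\bbS F)\sm_\bbS G\cong E\sm_\bbS(F\sm_\bbS G)$ is obtained by exhibiting both sides as the same iterated coequalizer of $E\otimes F\otimes G$ (modulo the two $\bbS$-actions), again using that $\otimes$ preserves coequalizers in each variable.

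The one place requiring care — and the main obstacle — is verifying Mac Lane's pentagon and hexagon for $\sm_\bbS$. My approach is to deduce each coherence diagram from the corresponding diagram for $\otimes$ by invoking the universal property of the relevant coequalizers: both paths around pentagon or hexagon arise from the same map of symmetric sequences after passage to the coequalizer, so equality on symmetric sequences forces equality on symmetric spectra. Once coherence for $\otimes$ is granted (from the earlier lemma), the formal descent is mechanical, and there is no further input needed beyond the colimit-preservation of $\otimes$ and the commutativity of $\bbS$.
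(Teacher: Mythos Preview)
The paper does not actually prove this proposition. It is stated without proof and the reader is implicitly referred to the original source \cite{HSS} (Hovey--Shipley--Smith); immediately after the statement the paper moves on to the Eilenberg--MacLane example. This is typical of survey lecture notes: the construction of symmetric spectra is sketched only to reassure the reader that spectra are concrete, not to give a self-contained development.

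Your proposal is the standard and correct argument. You correctly identify the general categorical principle (modules over a commutative monoid in a cocomplete closed symmetric monoidal category inherit a symmetric monoidal structure via the relative tensor product), and you correctly verify the hypotheses from the ingredients already supplied in the paper: the lemma that $\otimes$ on symmetric sequences is symmetric monoidal, and the example that $\bbS$ is a commutative monoid. The only point worth tightening is your claim that $\otimes$ preserves colimits in each variable: rather than arguing from the explicit formula, it is cleaner to note that symmetric sequences are a presheaf category (pointed simplicial functors on the groupoid of finite sets and bijections), so $\otimes$ is a Day convolution and hence has a right adjoint in each variable; this immediately gives the colimit preservation you need and is how the result is proved in \cite{HSS}. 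But your direct argument also works.
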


It is now easy to give the one example most important to commutative 
algebraists.
\begin{example}
For any abelian group $M$, we define the Eilenberg-MacLane symmetric spectra.
For a set  $T$ we write $M \otimes T$ for the $T$-indexed sum of copies
of $M$; this is natural for maps of sets and therefore extends to an 
operation on simplicial sets. We may then define the Eilenberg-MacLane
symmetric spectrum
$HM:=(M\tilde{\otimes} S^0,M\tilde{\otimes} S^1, M\tilde{\otimes}
S^2,\dots)$, where $\tilde{\tensor}$ means the tensor product of an
abelian group and a simplicial set with the basepoint generator set to
zero.  It is elementary to check that if $R$ is a commutative ring, then  $HR$ is a monoid 
in the category of $\bbS$-modules, and if $M$ is an $R$-module, $HM$ is a module over
$HR$.
\end{example}

Next one may  define  the stable model structure on symmetric spectra;
this is done in several steps (Section 3 of \cite{HSS}), but we may
summarise it by saying that  the weak equivalences (``stable equivalences'') are maps $f: X\lra Y$
which induce isomorphisms in all cohomology theories. The homotopy
category of symmetric spectra is obtained by inverting stable
equivalences.

\section{The three examples}

The point of the present section is to introduce our three basic
classes of examples. Classical commutative local Noetherian rings,
rational cochains on simply connected spaces and mod $p$ cochains on 
classifying spaces of finite groups.

The importance of the category of symmetric  spectra \cite{HSS} is that it admits
a symmetric monoidal smash product compatible with the model
structrures. Given this, we can start to do algebra
with spectra:  choose  a ring spectrum $\boldR$ (i.e., a monoid in the category of spectra), 
form the category of $\boldR$-modules or $\boldR$-algebras and 
then pass to homotopy. We may then attempt to use algebraic methods and
intuitions to study $\boldR$ and its modules. 

\subsection{Classical commutative algebra}
We explain why the classical derived category is covered by the context of spectra. 

In Section \ref{sec:constructspectra} we described a functorial construction for symmetric spectra taking an abelian group  $M$ and
giving the Eilenberg-MacLane spectrum $HM$, which is characterised up to homotopy by the property 
that $\pi_*(HM)=\pi_0(HM)=M$. For symmetric spectra the construction
is lax symmetric monoidal, so that
if $A$ is a commutative ring, $HA$ is a commutative ring spectrum. Furthermore the construction gives 
a  functor $A\text{-modules } \longrightarrow HA\text{-modules}$. 
Passage to homotopy groups gives a functor $\Ho($HA$-mod) \to \Ho(A$-modules$)=\sfD(A)$ and 
in fact the model categories are equivalent.

\vskip .1in

\begin{thm} (Shipley \cite[1.1, 2.15]{Shipley}) There is a Quillen equivalence
between the category of differential graded $\Z$-algebras and the category of $H\Z$-algebras 
in spectra. 

If we choose a DGA $A$ and the corresponding $H\Z$-algebra $HA$, 
there is a Quillen equivalence between  the category of $A$-modules and the category of $HA$-modules,
and hence in particular a triangulated equivalence
$$
\sfD(A)=\Ho(A\text{-modules}) \simeq \Ho(HA\text{-modules})=\sfD(HA)
$$
of derived categories. 
\end{thm}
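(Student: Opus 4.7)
The plan is to construct a zig-zag of Quillen equivalences of symmetric monoidal model categories between the category of unbounded chain complexes of $\Z$-modules and the category of $H\Z$-modules in symmetric spectra, and then to promote this additive equivalence to monoids (algebras) and to modules over corresponding monoids. The theorem then follows by applying the resulting algebra-level equivalence and by restricting to module categories over a chosen DGA $A$ and its image $HA$.

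First I would introduce an intermediate category: symmetric spectra of simplicial abelian groups $\mathrm{Sp}^{\Sigma}(s\mathcal{A}b)$, equipped with its own symmetric monoidal smash product. This category interpolates between the two worlds because (i) there is a free/forgetful adjunction with the category of $H\Z$-modules in symmetric spectra of simplicial sets, and (ii) the Dold-Kan normalization $N\colon s\mathcal{A}b \to \mathrm{Ch}_{\geq 0}$, together with its inverse $\Gamma$, can be stabilized to connect $\mathrm{Sp}^{\Sigma}(s\mathcal{A}b)$ with unbounded chain complexes $\mathrm{Ch}(\Z)$. The additive part of the theorem amounts to showing that both adjunctions in this zig-zag are Quillen equivalences on the underlying (module-level) stable model categories; each can be checked by verifying that the derived unit and counit are stable equivalences, using the classical Dold-Kan equivalence on each simplicial level together with the fact that $H\Z$ is obtained from $\bbS$ by tensoring levelwise with $\Z$.

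The real work lies in promoting these equivalences to monoids and modules over monoids. Following the Schwede-Shipley machinery for transferring model structures along monoidal adjunctions, one must verify that each functor in the zig-zag, when applied to monoids and their modules, still produces Quillen equivalences of the transferred model structures. The main obstacle is that Dold-Kan is only \emph{lax} symmetric monoidal, through the Eilenberg-Zilber shuffle and Alexander-Whitney maps: algebra structures cannot simply be carried across strictly, and one must instead show that on sufficiently cofibrant input the lax structure maps are themselves weak equivalences, so that the induced comparison on monoids remains a Quillen equivalence. This is where the bulk of the technical effort sits. Once the Quillen equivalence of DGAs and $H\Z$-algebras is established, the module statement follows by applying the same zig-zag relative to a fixed DGA $A$ and its corresponding $H\Z$-algebra $HA$: the transported Quillen equivalence on modules descends on passing to homotopy to the desired triangulated equivalence $\sfD(A) \simeq \sfD(HA)$.
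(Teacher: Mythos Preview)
The paper does not prove this theorem at all: it is stated as a result of Shipley and simply cited to \cite[1.1, 2.15]{Shipley}. So there is no ``paper's own proof'' to compare against; the author treats it as a black box input to the rest of the discussion.

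That said, your outline is an accurate high-level sketch of Shipley's actual argument: the zig-zag through $\mathrm{Sp}^{\Sigma}(s\mathcal{A}b)$, the stabilized Dold--Kan correspondence, and the Schwede--Shipley transfer machinery for monoids and modules, with the key technical point being that the Eilenberg--Zilber/Alexander--Whitney maps make Dold--Kan only lax monoidal so one must work to lift the equivalence to algebras. One small correction: the actual zig-zag in Shipley's paper has more intermediate steps (there is also a passage through symmetric spectra in non-negatively graded chain complexes $\mathrm{Sp}^{\Sigma}(\mathrm{Ch}_{\geq 0})$ before reaching unbounded chain complexes), and the promotion to monoids goes via the notion of weak monoidal Quillen pairs developed in Schwede--Shipley. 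But as a summary of the strategy your proposal is sound.
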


The commutative case is complicated by the fact that there are no
model structures on commutative algebras in general. However, if $A$ is commutative, $HA$ may be
taken to be commutative and the Quillen equivalence between
$A$-modules and $HA$-modules, tensor products over $A$ and over $HA$
correspond. 

We sometimes use Shipley's result to excuse the omission of  the letter $H$ indicating spectra. 
In this translation homology in the classical context of chain complexes corresponds to homotopy in 
the context of spectra: $H_*(M)=\pi_*(HM)$.

Now that we can view classical commutative rings as  commutative ring spectra, we can attempt to 
extend classical notions to the context of spectra. From one point of view, we should first attempt to understand
the analogues of local rings before attempting to look at more geometrically complicated ones. Accordingly, 
in most of the lectures we will assume the commutative ring $A$ is  local, with residue field $k$.

\subsection{Cochains on a space}
\label{eg:cochains}
In the category of spectra, we may solve the commutative cochain problem. 
More precisely,  for any space $X$ and a commutative ring $k$,  we may form the function spectrum 
$C^*(X;k)=\text{map}(\Sigma^\infty X_+, Hk)$. It is obviously  an
$Hk$-module, and we may combine the commutative multiplication on $Hk$
with the  cocommutative diagonal on $X_+$ to see that $C^*(X;k)$ is a commutative $Hk$-algebra.
The notation is chosen because it is a model for the cochains in the sense  
$$
\pi_*(\text{map}(\Sigma^{\infty}X,Hk))=H^*(X;k).  
$$
The commutative algebra of $C^*(X;k)$ is one of the main topics for these lectures, and we
will omit the coefficient ring $k$ when it is clear from the context. 

We then use algebraic
behaviour of this commutative ring to pick out interesting classes of
spaces. In accordance with the principle that $C^*(X;k)$ is a sort of ring
of functions on $X$, we simplify terminology and say that $X$ has a property
P over $k$ if the commutative ring spectrum $C^*(X;k)$ has the
property P.

We will focus particularly on two special cases. 

\begin{example} {\em (Rational case.) }
We will refer to the special case $k=\Q$ with $X$ simply connected as
the rational case. Indeed this may be treated by classical means with
the Thom-Sullivan construction of piecewise polynomial forms giving a commutative DGA model for
$C^*(X;\Q)$. There is an enormous literature studying rational
homotopy theory by homotopy invariant commutative algebra methods. 
The book \cite{FHT} of F\'elix, Halperin and Thomas is an excellent
starting point. The `Looking glass' paper \cite{AvramovHalperinLookingGlass}
expounds the philosophy that algebra and topology are imperfect
reflections of each other and gives numerous profound examples of
it. Because we  permit more general ring spectra, we are more restricted in
theorems but have a larger range of examples. 
\end{example}

\begin{example}{\em (Modular representation theory.)}
We will also focus particularly on the case $k=\FF_p$, when $X=BG$
for a finite group $G$. We recall that the classifying space $BG$ of
principal $G$-bundles  is characterised (when $G$ is finite) 
by the fact that it has fundamental group $G$ and all other homotopy
groups trivial. The interest in this special case comes from the fact
that its coefficient ring 
$$\pi_*(C^*(BG;\FF_p))=H^*(BG;\FF_p)=\Ext_{\FF_p G}^*(\FF_p, \FF_p)$$
is the group cohomology ring.

There is an operation of $p$-completion due to Bousfield-Kan \cite{BK}
which behaves well for a large class of spaces (the {\em $p$-good}
spaces). In particular, all connected spaces with finite fundamental
group are $p$-good, which will cover our examples. For $p$-good spaces
$C^*(X;\FF_p)=C^*(X_p^{\wedge};\FF_p)$, and 
accordingly we will generally assume that $X$ is $p$-complete when $k=\FF_p$.
The space $X=BG$ is already $p$-complete if $G$ is a $p$-group, but in
general $BG_p^{\wedge}$ has infinitely many homotopy groups and its
funademental group is the largest $p$-quotient of $G$.
\end{example}

\section{Finiteness conditions}
\label{sec:context}

The point of this section is to introduce a variety of  finiteness conditions we
may impose. The essential 
limitation of all we do is that it is based on {\em local} rings in commutative algebra. We will not
discuss the new and interesting  features that can arise when there are many maximal ideals.

\begin{context}
The main input is a map $R\lra k$ of ring spectra with notation suggested 
by the case when $R$ is commutative local ring with residue field $k$. 
 \end{context}

For the most part, we  work in the homotopy category $\Ho(\Rmod)$ of left $R$-modules. 

\subsection{New modules from old.}
Two construction principles will be important to us.  The terminology
comes from Dror-Farjoun \cite{DrorFarjoun}, but in our stable context the behaviour is
rather simpler. 

If $M$ is an $R$-module we say that $X$ is {\em built} from $M$ (and write $M\builds X$)
 if $X$ can be  formed from $M$ by completing triangles, taking 
coproducts and retracts (i.e., $X$ is in the {\em localizing subcategory} 
generated by $M$). We refer to objects built from $M$ as {\em $M$-cellular}, 
and write  $\cell (R,M)$ for the resulting full subcategory of $\Ho (\Rmod )$.
An {\em $M$-cellular approximation} of $X$ is a map $\cell_M(X) \lra X$ where
$\cell_M(X)$ is $M$-cellular and the map is an $\HomR (M,\cdot)$-equivalence.

 We say that $X$ is {\em finitely built} from $M$ (and write $M\finbuilds X$)
if only finitely many steps  and finite coproducts are necessary (i.e., $X$ is in  the {\em thick} 
subcategory generated by $M$).

Finally, we say that $X$ is {\em cobuilt} from $M$ if $X$ can be 
formed from $M$ by completing triangles, taking 
products and retracts (i.e., $X$ is in the {\em colocalizing subcategory} 
generated by $M$).

\subsection{Finiteness conditions.}

We say $M$ is {\em small} if the natural map 
$$\bigoplus_{\alpha}[M, N_{\alpha}]\lra [M,\bigoplus_{\alpha} N_{\alpha}]$$
is an isomorphism for any set of $R$-modules $N_{\alpha}$. Smallness is 
equivalent to being finitely built from $R$. It is easy to see that 
any module finitely built from $R$ is small. For the reverse implication we use 
the fact that we can build an $R$-cellular approximation $\cell (R,M) \lra M$; 
this is an equivalence, and by smallness, $M$ is a retract of a finitely built 
subobject of $\cell (R,M)$. 

We sometimes require that  $k$ itself is small, but this is an extremely strong condition on $R$
and it is important to develop the
 theory under a much weaker condition. 

\begin{defn} \cite{DGI} 
\label{defn:proxysmall}
We say that $k$ is {\em proxy-small} if there is an object 
$K$ with the following properties
\bi
\item $K$ is small ($R\finbuilds K$)
\item  $K$ is finitely built from $k$ ($k\finbuilds K$) and
\item  $k$ is built from $K$ ($K\builds k$). 
\ei
\end{defn}

\begin{remark}
Note that the second and third condition imply that the $R$-module $K$ 
generates the same category as $k$ using
triangles and coproducts: $\cell (R,K)=\cell (R,k) . $
\end{remark}

One of the main messages of \cite{DGI} is that we may use the
condition that $k$ is proxy-small as a substitute for the Noetherian
condition in the conventional setting. This rather weak condition 
allows one to develop a very useful theory applicable in a large range
of examples. 

We can illustrate this by looking at the proxy-small condition in the classical case. 

\begin{example}   {\em (Conventional commutative algebra)}
Take $R$ to be a commutative Noetherian local ring in degree 0, with maximal 
ideal $\fm$ and residue field $k$. 

By the Auslander-Buchsbaum-Serre theorem, $k$ is  small if and only if $R$ is a regular 
local ring,  confirming that  the smallness of $k$ is a very 
strong condition. On the other hand, $k$ is  always proxy-small:  
we may take $K=K (\AL)$ to be the Koszul complex for a generating 
sequence $\AL$ for $\fm$ (see Appendix B). 

It is shown in \cite{tec} that $\cell (R,k)$ consists of objects whose 
homology is $\fm$-power torsion. 
\end{example}

\part{Morita equivalences}
Section \ref{sec:Morita} introduces the basic apparatus of the Morita
equivalences that concern us, along with their relation with torsion
functors and completion. Section \ref{sec:proxysmall} discusses the
rather weak condition that $k$ is proxy-small in our examples, and
shows it provides an inclusive framework. Section
\ref{sec:exterior} illustrates the power of the Morita framework by 
discussing the classification of ring spectra and DGAs with
coefficient rings exterior on one generator. 

\section{Morita equivalences}
\label{sec:Morita}

Morita theory studies objects $X$ of a
category $\C$ by considering maps from a test object $k$. More
precisely, $X$ is studied by considering $\Hom (k,X)$
as  a module over the endomorphism ring $\End (k)$. 
In favourable circumstances this may give rather complete information. 
 This is an instance of the philosophy that one gains insights
by looking at rings of operations.

In the classical situation,  $\C$ is an abelian category with infinite sums and $k$ 
is a small projective generator, and we find $\C$ is equivalent
to the category of $\End(k)$-modules \cite[II Thm 1.3]{BassK}.  
We will work with a stable model category rather than in an 
abelian category, and  $k$ will not necessarily be either small or
a generator. The fact that the objects of the categories are
spectra is unimportant except for the formal context it provides.
See \cite{SchwedeM} for a more extended account from the present point of view.

In fact,  two separate Morita equivalences play a role: two 
separate categories of modules over a commutative ring are both 
shown to be equivalent to a category of modules over the same non-commutative
ring.

This section is based on \cite{tec}, with augmentations from \cite{DGI}.

\subsection{First variant.}
To start with we introduce the  ring spectrum $\cE=\HombR
(\boldk,\boldk)$ of (derived) endomorphisms of $k$. This is usually
not a commutative ring. Morita theory  considers the relationship between the 
categories of left $\boldR$-modules 
and  of right $\cE$-modules. We have  the adjoint pair 
$$\RLadjunction{E}{\bRmod}{\modcE}{T}$$
defined by 
$$T(X):=X \tensorcE \boldk \mbox{ and } E(M):=\HombR (\boldk,M).$$ 
It is clear from the definition that $E(M)$ is an $\cE$-module, since
$\cE =\Hom_R(k,k)$ acts on the right through the factor $\boldk$ in
$EM=\HombR (\boldk, M)$. By the same token, it is clear that the ring 
$$\hat{R}=\Hom_{\cE}(\boldk, \boldk)$$
acts on the left of $TX=X\tensor_{\cE}\boldk$ through its action on
$\boldk$.
To see that this gives an $R$-module we note that there is a ring map 
$$R\lra \Hom_{\cE}(\boldk, \boldk)=\hat{R}$$
by `right multiplication' of $R$ on $\boldk$; thuse $TX$ is an
$R$-module by restriction.

\begin{thm}
 If $k$ is small, this adjunction gives equivalence 
$$\cell (R,k) \simeq \modcE$$
between the derived 
category of $\boldR$-modules built from $\boldk$
and the derived category of $\cE$-modules. 
\end{thm}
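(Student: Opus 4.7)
My plan is to verify that the adjunction $(T \dashv E)$ restricts to an equivalence by the standard ``generators-plus-preservation'' argument. The point is to check that the unit and counit are equivalences on certain canonical generators, and then to propagate this along the operations (triangles, coproducts, retracts) that generate the ambient subcategories on each side. The smallness hypothesis on $k$ enters precisely to guarantee that $E$ preserves the relevant constructions.

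First I would evaluate the functors on the obvious generators. On the module side, $T(\cE) = \cE \tensorcE \boldk \simeq \boldk$ and $E(\boldk) = \HombR(\boldk,\boldk) = \cE$, where the first identification is the free-module calculation and the second is by definition of $\cE$. Using the standard triangle identities, it follows that the unit $\eta_{\cE} \colon \cE \lra ET(\cE) \simeq \cE$ and the counit $\varepsilon_{\boldk} \colon TE(\boldk) \simeq \boldk$ are both equivalences. Next I would observe that $TX = X \tensorcE \boldk$ is always $\boldk$-cellular as an $R$-module: since $T$ is a left adjoint it preserves triangles, coproducts, and retracts, and every $\cE$-module is built from $\cE$, so $TX$ is built from $T(\cE) = \boldk$. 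Thus $T$ lands in $\cell(R,\boldk)$.

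To propagate these equivalences I would use smallness of $\boldk$. Because $R\finbuilds \boldk$, the functor $E = \HombR(\boldk, -)$ commutes with arbitrary coproducts (and obviously with triangles and retracts). Consequently $ET$ and $TE$ are both exact and preserve coproducts. The full subcategory of $\cE$-modules $X$ on which $\eta_X$ is an equivalence is therefore closed under triangles, coproducts, and retracts, i.e., it is a localizing subcategory; it contains $\cE$, and $\cE$ generates $\modcE$ as a localizing subcategory, so $\eta$ is an equivalence everywhere on $\modcE$. Dually, the class of $R$-modules $M$ on which $\varepsilon_M$ is an equivalence is a localizing subcategory containing $\boldk$, and so contains all of $\cell(R,\boldk)$. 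Combining these, $(T,E)$ restricts to the claimed equivalence.

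The main obstacle is the coproduct-preservation of $E$: this is the only place where smallness of $\boldk$ is really used, and without it the same argument gives only an equivalence between the \emph{thick} subcategories finitely built from $\boldk$ and from $\cE$ respectively, rather than the full localizing subcategory $\cell(R,\boldk)$ and all of $\modcE$. Everything else is formal from the tensor--Hom adjunction and the triangulated structure.
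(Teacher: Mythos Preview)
Your proposal is correct and follows essentially the same strategy as the paper: check the unit and counit on the generators $\cE$ and $\boldk$, then propagate to all of $\modcE$ and $\cell(R,\boldk)$ using that smallness of $\boldk$ makes $E$ preserve coproducts. The paper's proof is simply a terser version of yours, saying only that the unit ``is obviously an equivalence for $X=\cE$ and hence for any $X$ built from $\cE$, by smallness of $k$'' and that the counit is similar.
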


\begin{proof}
To see the 
unit $X \lra ETX=\HomR (k, X \tensorE k)$ is an equivalence, 
we note it is obviously an equivalence  for $X=\cE$ and hence
for any $X$ built from $\cE$, by smallness of $k$. The argument
for the counit is similar.
\end{proof}

\begin{remark}
If $k$ is not small, the unit of the adjunction may not be an equivalence. For 
example if $R=\Lambda (\tau)$ is exterior on a generator $\tau$ of degree $1$ then 
$\cE \simeq k[x]$ is polynomial on a generator of 
degree $-2$. As an $R$-module,  $k$ is of infinite projective dimension and
hence it is not small. In this case all $R$-modules are $k$-cellular, so that 
$\cell (R,k) =\Rmod$. Furthermore, the only subcategories of $R$-modules 
closed under coproducts
and triangles are the trivial category and the whole category. 
On the other hand the category of torsion $\cE$-modules is a proper 
non-trivial subcategory closed under coproducts and triangles. 

Exchanging roles of the rings, so that $R=k[x]$ and $\cE \simeq \Lambda (\tau)$, 
we see $k$ is small as a $k[x]$-module 
and $\cell (k[x],k)$ consists of torsion modules. Thus we deduce
$$\mbox{tors-$k[x]$-mod} \simeq \mbox{mod-$\Lambda (\tau)$}. \qqed$$
\end{remark}

In fact  the counit 
$$TEM =\HomR (k,M) \tensorE k \lra M$$ 
of the adjunction is of interest much more generally. Notice that 
any $\cE$-module (such as $\HomR (k,M)$) is built from 
$\cE$, so the domain is $k$-cellular. We say $M$ is {\em effectively 
constructible} from $k$ if the counit is an equivalence, because
$TEM$ gives a concrete and functorial model for the cellular approximation
to $M$.  Under the much weaker assumption of proxy smallness 
we obtain a very useful conclusion linking Morita theory to commutative
algebra. 

\begin{lemma}
\label{proxycell}  
Provided $\boldk$ is proxy-small, 
 the counit 
$$TEM =\HomR (k,M) \tensorE k \lra M$$ 
is $k$-cellular approximation, and hence
in particular any $k$-cellular object is effectively constructible from $k$. 
\end{lemma}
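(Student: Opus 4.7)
The plan is to verify the two ingredients required for a $k$-cellular approximation: that $TEM$ is $k$-cellular, and that the counit $\epsilon_M \colon TEM \lra M$ is a $\Hom_R(k,-)$-equivalence.

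$k$-cellularity of $TEM$ is immediate. Any right $\cE$-module is $\cE$-cellular, so $EM$ is built from $\cE$; applying the colimit-preserving functor $(-) \otimes_\cE k$, which sends $\cE$ to $k$, exhibits $TEM$ as built from $k$.

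For the equivalence condition, the remark after Definition \ref{defn:proxysmall} gives $\cell(R,k) = \cell(R,K)$, and a standard argument using that the class of $R$-modules with no maps from $k$ (resp.\ $K$) is localizing shows that a map is a $\Hom_R(k,-)$-equivalence iff it is a $\Hom_R(K,-)$-equivalence. So it suffices to prove that $\Hom_R(K,\epsilon_M)$ is an equivalence. I would compare two natural maps. First, the evaluation
$$\nu_K \colon EM \otimes_\cE \Hom_R(K,k) \lra \Hom_R(K,M), \quad f \otimes g \longmapsto f \circ g,$$
reduces to the identity on $EM$ when $K=k$; since both sides are exact functors of $K$ preserving finite coproducts and retracts, $\nu_K$ is an equivalence for every $K$ finitely built from $k$---in particular for our proxy. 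Second, smallness of $K$ lets $\Hom_R(K,-)$ pass through arbitrary homotopy colimits, in particular through the bar construction computing $EM\otimes_\cE k$; combined with the equivalence $\Hom_R(K,Y\wedge k)\simeq Y\wedge \Hom_R(K,k)$ for any spectrum $Y$ (which again follows from smallness, since $Y\wedge k$ is built from $k$ by a $Y$-indexed homotopy colimit), this yields
$$\Hom_R(K, TEM) \;\simeq\; EM \otimes_\cE \Hom_R(K,k).$$
Naturality identifies the composite of these two equivalences with $\Hom_R(K,\epsilon_M)$, which is therefore an equivalence.

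The main obstacle is the bookkeeping in the second half: one must verify that the splice of the two equivalences actually agrees with $\Hom_R(K,\epsilon_M)$ (a naturality diagram chase, grounded in the case $K=k$ where the counit is literally the evident pairing), and that the commutation of $\Hom_R(K,-)$ with the bar construction is genuine at the simplicial level, not merely after realization. Both reductions rely squarely on smallness of $K$, which is what proxy-smallness gives us beyond the too-strong hypothesis that $k$ itself be small. Once these checks are in hand, combining the two ingredients exhibits $\epsilon_M$ as the $k$-cellular approximation of $M$; in particular, if $M$ is already $k$-cellular then $\epsilon_M$ is an equivalence, so $M$ is effectively constructible from $k$.
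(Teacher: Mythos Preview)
Your proof is correct and follows essentially the same approach as the paper's: both establish $k$-cellularity of $TEM$ trivially, then reduce to a $K$-equivalence via the evaluation map $\Hom_R(k,M)\otimes_\cE \Hom_R(K,k)\to \Hom_R(K,M)$ (an equivalence since $K$ is finitely built from $k$) combined with the commutation $\Hom_R(K, EM\otimes_\cE k)\simeq EM\otimes_\cE \Hom_R(K,k)$ (an equivalence since $K$ is small). The paper packages these two steps into a single commutative square and is terser about the smallness step, while you unpack the bar-construction commutation more explicitly; but the content is the same.
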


\begin{proof} 
We observed above that the domain 
is $k$-cellular. To see the counit is a $\HomR (k,\cdot )$-equivalence, 
consider the evaluation map 
$$\gamma: \HomR (k,X) \tensorE \HomR (Y,k) \lra \HomR (Y,X).$$ 
This is an equivalence if $Y=k$, and hence by proxy-smallness it is an 
equivalence if $Y=K$. This shows that the top horizontal in the  diagram
$$\begin{array}{ccc}
\HomR (k,X) \tensorE \HomR (K,k)& \stackrel{\simeq} \lra &\HomR (K,X)\\
\simeq \downarrow &&\downarrow = \\
\HomR (K,\HomR (k,X)\tensorE k) &\lra &\HomR (K,X)
\end{array}$$ 
is an equivalence. The left hand-vertical is an equivalence since $K$ is
small. Thus the lower horizontal is an equivalence, 
which is to say that the counit 
$$TEX=\HomR (k,X)\tensorE k \lra X $$ 
is a $K$-equivalence. By proxy-smallness,  this counit map is a $k$-equivalence.
\end{proof}

\begin{examples}
\label{egcell}
(i) If $R$ is a commutative Noetherian local ring, we recall in Appendix B
that the $k$-cellular approximation 
of a module $M$ is $\Gamma_{\fm}M =\Kinfty (\fm ) \tensorR M$, where $\Kinfty (\fm)$ is the
stable Koszul complex,  so we have
$$TEM \simeq \Kinfty (\fm) \tensor_R M.$$

(ii) If $R=C^*(X;k)$ it is not easy to say what the $k$-cellular approximation  is
 in general, but any bounded below module  $M$  is cellular.
\end{examples}

\subsection{Second variant.}

There is a second adjunction  between the derived 
categories of left $\boldR$-modules and  of right $\cE$-modules. 
In the first variant, $k$ played a central role as a left $R$-module
and a left $\cE$-module. In this second variant 
$$\ksharp :=\HombR (k,R)$$
plays a corresponding role: it is a right $R$-module and a right 
$\cE$-module. We have  the adjoint pair 
$$\adjunction{\Ep}{\bRmod}{\modcE}{C}$$
defined by 
$$\Ep (M):=\ksharp \tensorR M \mbox{ and } C(X):=\HomE (\ksharp ,X).$$ 

\begin{remark}
\label{defn:comp}
 If $k$ is small then 
$$\Ep (M) = \HomR (k,R) \tensorR M \simeq \HomR(k, M)=EM, $$
so the two Morita equivalences consider the left and right adjoints of the 
same functor.
\end{remark}

The unit of the adjunction $M \lra C\Ep (M)$ is not
very well behaved, and the functor $C\Ep$ is not even idempotent in general. 


\subsection{Complete modules and torsion modules} \label{sec:Rmodfunctors}

Even when we are not interested in the intermediate category 
of $\cE$-modules, several of the composite functors give interesting
endofunctors of the  category 
of $R$-modules.

\begin{lemma}
\label{cellsmashing}
If $k$ is proxy-small then $k$-cellular approximation  is smashing:
$$\cell_kM \simeq (\cell_k R)\tensorR M, $$
so that 
$$TEM=TE'M.$$
\end{lemma}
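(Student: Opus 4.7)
The plan is to establish the smashing claim $\cell_k M \simeq (\cell_k R) \tensor_R M$ by showing that the right-hand side is both $k$-cellular and admits a $\HomR(k,-)$-equivalence to $M$, so that it is a $k$-cellular approximation; the identification $TEM = TE'M$ will then follow from Lemma~\ref{proxycell} together with associativity of tensor products. At the outset, I apply Lemma~\ref{proxycell} with $M = R$ to identify $\cell_k R \simeq TER = \ksharp \tensor_{\cE} k$.

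Cellularity of $(\cell_k R) \tensor_R M$ is easy: since $\cell_k R$ is built from $k$, the coproduct-preserving derived functor $(-)\tensor_R M$ sends it to something built from $k \tensor_R M$, which is a $k$-module and hence $k$-cellular as an $R$-module (via restriction of scalars along $R \to k$).

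The key step is showing the counit map $(\cell_k R) \tensor_R M \to M$ is a $\HomR(k,-)$-equivalence. Since $k$ is built from $K$ by proxy-smallness, it suffices to verify this at $K$. The essential technical tool is the standard identity $\HomR(K, X) \simeq \HomR(K, R) \tensor_R X$ for small $K$ (check on $X = R$ and extend by exactness and coproducts). Applying this twice gives
$$
\HomR(K, (\cell_k R) \tensor_R M) \simeq \HomR(K, \cell_k R) \tensor_R M,
$$
and since $\cell_k R \to R$ is a $\HomR(k,-)$-equivalence (hence a $\HomR(K,-)$-equivalence, because $K$ is finitely built from $k$) one may replace $\cell_k R$ by $R$; a third use of the small-$K$ identity recognises the result as $\HomR(K, M)$. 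By uniqueness of the $k$-cellular approximation, this and cellularity yield $(\cell_k R) \tensor_R M \simeq \cell_k M \simeq TEM$. For the final equation, associativity of the derived tensor product, using that $\ksharp$ carries commuting right-$R$ and right-$\cE$ actions, gives
$$
TEM \simeq (\ksharp \tensor_{\cE} k) \tensor_R M \simeq (\ksharp \tensor_R M) \tensor_{\cE} k = TE'M.
$$

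The main obstacle is the $\HomR(K,-)$-equivalence step. The whole argument rests on being able to commute $\HomR(K,-)$ past $\tensor_R M$, which is exactly what smallness of $K$ delivers, and then on using both directions of proxy-smallness ($k \finbuilds K$ and $K \builds k$) to pass freely between $\HomR(k,-)$-equivalences and $\HomR(K,-)$-equivalences.
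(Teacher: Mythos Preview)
Your proof is correct and follows essentially the same route as the paper: verify that $(\cell_k R)\tensor_R M$ is $k$-cellular, then use smallness of $K$ together with both directions of proxy-smallness to check that the counit is a $\HomR(k,-)$-equivalence. The paper phrases the key step as a thick-subcategory argument (the class of $M$ for which the map is a $K$-equivalence contains $R$ and is localizing), whereas you unpack this via the duality identity $\HomR(K,X)\simeq \HomR(K,R)\tensor_R X$; these are the same idea, and your explicit associativity argument for $TEM\simeq TE'M$ spells out a step the paper leaves implicit.
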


\begin{proof}
We consider the map 
$$\lambda : (\cell_kR )\tensorR M\lra R\tensorR M=M $$
and show it has the properties of cellularization. 

To start with, since $R$ builds $M$, it follows that 
$(\cell_kR) \tensorR R$ builds $(\cell_k R) \tensorR M$, and hence
$(\cell_kR) \tensorR M$ is $k$-cellular.

Since $k$ is proxy small, the  map $\lambda $ is $k$-equivalence if and only if
it is a $K$-equivalence. We may consider the class of objects $M$ for
which $\lambda$ is a $K$-equivalence. Since $K$ is small, it is closed
under arbitrary coproducts as well as triangles. Since the class obviously
contains $R$ itself, it contains all modules $M$.
\end{proof}

We therefore see by \ref{proxycell} and \ref{cellsmashing} that if $k$ is proxy-small 
$$\cell_k (M)=TE M =T\Ep M.$$
This is the composite of two left adjoints, focusing 
attention on its right adjoint $CE M$, and we note that
$$CE(M)=\HomR (\ksharp, \HomR (k, M))=\HomR (TE R, M).$$
By analogy with  Subsection \ref{subsec:loccomp} of Appendix \ref{sectoploccoh}, we may make the 
following definition.

\begin{defn} 
The {\em completion } of an $R$-module $M$ is the map
$$M \lra \HomR (TER, M)=CEM.$$
We say that $M$ is {\em complete} if the completion 
map is an equivalence. 
\end{defn}

\begin{remark}
By \ref{cellsmashing} we see that completion is idempotent. 
\end{remark}

We  adopt the notation 
$$\Gk M := TE'M$$
and
$$\Lk M := C EM. $$
This is  by analogy with the case of commutative algebra through 
the approach of Appendix C, where
$\Gk =\Gamma_I$ is the total 
right derived functor of the $I$-power torsion functor and 
$\Lk =\Lambda^I$ is the total left derived functor of the completion 
functor (see \cite{AJL1,AJL2} for the  context of commutative rings).

It follows from the adjunctions described earlier in this section 
that $\Gk$ is left adjoint to $\Lk$ as endofunctors of the category of 
$R$-modules: 
$$\HomR (\Gk M, N)=\HomR (M, \Lk N)$$ 
for $R$-modules $M$ and $N$. Slightly more general is the following observation.

\begin{lemma}
If $k$ is proxy-small, $\Gk$ and $\Lk$ give an adjoint equivalence
$$\cell (R,k) \simeq \mathrm{comp}\!-\!\Rmod,$$
where $\mathrm{comp}\!-\!\Rmod$ is the triangulated subcategory 
of $R-mod$ consisting of complete modules.
\end{lemma}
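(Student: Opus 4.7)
The strategy is to show the existing adjunction $\Gk \dashv \Lk$ between endofunctors of $\Rmod$ restricts to an equivalence between the subcategories $\cell(R,k)$ and $\compRmod$. This requires (a) that $\Gk$ preserves cellularity (clear by Lemma \ref{cellsmashing}) and that $\Lk$ lands in complete modules, (b) that the unit is an equivalence on cellular modules, and (c) that the counit is an equivalence on complete modules.

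For (a), the non-trivial part is idempotency of $\Lk$. The key identity is $\cell_k R \tensorR \cell_k R \simeq \cell_k R$: Lemma \ref{cellsmashing} applied to the already cellular module $\cell_k R$ identifies $\cell_k R \tensorR \cell_k R$ with the cellularisation of $\cell_k R$, which is $\cell_k R$ itself. Tensor-hom adjunction then yields
\[
\Lk \Lk M = \HomR(\cell_k R, \HomR(\cell_k R, M)) \simeq \HomR(\cell_k R \tensorR \cell_k R, M) \simeq \Lk M,
\]
so $\Lk M$ is complete.

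The main content is a pair of universal equivalences. First, cellularisation $\Gk N \to N$ is a $\HomR(Y, -)$-equivalence for every cellular $Y$: the definition gives this for $Y = k$, and the class of $Y$ with this property is closed under coproducts, triangles, and retracts, so contains all of $\cell(R,k)$. Applying this with $Y = \cell_k R$ immediately gives that $\Lk \Gk N \to \Lk N$ is an equivalence for every $N$. Second, $\Gk M \to \Gk \Lk M$ is always an equivalence. For $F = \fibre(M \to \Lk M)$ and any cellular $X$, tensor-hom adjunction combined with $X \tensorR \cell_k R \simeq \Gk X \simeq X$ (Lemma \ref{cellsmashing} together with cellularity of $X$) gives
\[
\HomR(X, \Lk M) \simeq \HomR(X \tensorR \cell_k R, M) \simeq \HomR(X, M),
\]
and one checks that this isomorphism is induced by the completion map. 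Hence $\HomR(X, F) \simeq 0$ for every cellular $X$, and specialising to $X = \Gk F$ forces $\Gk F \simeq 0$.

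Combining, for $M$ cellular we have $M \simeq \Gk M \simeq \Gk \Lk M$, and for $N$ complete, $\Lk \Gk N \simeq \Lk N \simeq N$. These provide the unit and counit of the restricted adjunction. The main obstacle is the argument that $\Gk F = 0$ where $F = \fibre(M \to \Lk M)$: the passage to $\HomR(X, F)$ followed by specialising to the cellular object $X = \Gk F$ is the crucial observation, and once it is in hand the remaining verifications (matching the equivalences with the unit and counit of the restricted adjunction, via the triangle identities) are routine formal manipulations.
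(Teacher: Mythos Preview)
Your argument is correct. The key inputs you use are exactly the ones available: the smashing property of cellularisation (Lemma~\ref{cellsmashing}), the tensor--hom adjunction expressing $\Lk$ as $\HomR(\Gk R,-)$, and the defining property of $k$-cellular approximation. The step showing $\Gk F\simeq 0$ by applying $\HomR(\Gk F,-)$ to the fibre $F$ is clean and is the right way to close the loop.

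The paper takes a different route. Rather than working directly with the endofunctors, it first observes that proxy-smallness lets one replace $k$ by the small object $K$ without changing either $\Gk$ or $\Lk$, and then appeals to the two Morita adjunctions $(T,E)$ and $(E',C)$ through the intermediate category $\modcE$: when the test object is genuinely small, each of these is an equivalence (onto cellular modules and from complete modules, respectively), and $\Gk\dashv\Lk$ is their composite. Your argument bypasses the $\cE$-module category entirely and stays inside $\Rmod$, deriving everything from the smashing identity $\Gk R\tensorR\Gk R\simeq\Gk R$. This is more elementary and is the standard ``MGM equivalence'' pattern familiar from tensor-triangulated geometry; the paper's approach, by contrast, makes visible \emph{why} the equivalence holds, namely that both sides are identified with (a full subcategory of) $\modcE$.
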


\begin{proof}
We have
$$T\Ep M \simeq T E M \simeq \Gk M \simeq \Gamma_KM, $$
and 
$$CE M \simeq \HomR (\Gk R,M) \simeq \HomR (\Gamma_K R,M),$$
so it suffices to prove the result when $k$ is small. When 
$k$ is small the present adjunction is the composite of two  adjoint pairs of
equivalences. We have seen this for the first variant, and the second
 variant is proved similarly by arguing that the unit and counit
are equivalences. 
\end{proof}

\subsection{dc-completeness}
We have  explained the importance of passing from $R$ to
$\cE=\Hom_R(k,k)$, and we have also given an example where we then
took  $\cE$ as our input ring, meaning that we need to consider the
ring  $\hat{R}=\Hom_{\cE}(\boldk, \boldk)$. Furthermore, the double
centralizer ring homomorphism 
$$\kappa : R\lra \Hom_{\cE}(\boldk, \boldk)=\hat{R}$$
played a role at the very start, in defining the $R$-module structure
on $TX$. 

We say that $R$ is {\em dc-complete} if $\kappa$ is an equivalence.

\section{Proxy-smallness in the examples}
\label{sec:proxysmall}
We establish the appropriate finiteness and completeness properties
in our principal examples.

\subsection{Commutative local rings}
When $R$ is a conventional commutative local Noetherian ring, we have
seen that $k$ is always proxy-small, and that the Koszul complex for a
set of generators of $\fm$ provides the small proxy for $k$. 

Next, we note that  
$$\pi_*(\cE)=\pi_*(\Hom_R(k,k))=\Ext_R^*(k,k), $$ 
which is a ring whose importance is very familiar. We will see in Section \ref{sec:exterior} that as a DGA, the endormorphisms $\cE =\Hom_R(k,k)$ contain
 considerably more information. 

We also note that in this case, $R$ is dc-complete if and only if it
is $\fm$-complete in the usual sense. Indeed, we calculate
$$\begin{array}{rcl}
R_{\fm}^{\wedge}&=&\holim_s R/\fm^s\\
&=&\holim_s \Hom_{\cE}(\Hom_R(R/\fm^s, k), k)\\
&=&\Hom_{\cE}(\hocolim_s \Hom_R(R/\fm^s, k), k)\\
&=&\Hom_{\cE}(k,k)
\end{array}$$
Here the first equivalence uses the Mittag-Leffler condition on $\{
R/\fm^s\}_s$, and the last condition uses the fact that 
$$\hocolim_s \Hom_R(R/\fm^s, k)\simeq \Gamma_{\fm}k\simeq k. $$

More generally, if $R$ is a commutative Noetherian
ring and $I$ is an ideal, we may consider $R\lra R/I$ (i.e., replacing
the field $k$ by a more complicated ring). It turns out that the
double centralizer map is $I$-adic completion whenever $R/I$ is
proxy-small.

\subsection{Cochains on a space}
\label{subsec:cochains}
In the setting $R=C^*(X; k)$ of cochains on a space, the Eilenberg-Moore spectral theorem shows that
$$\cE=\Hom_{C^*(X;k)}(k,k)\simeq C_*(\Omega X; k), H_*(\cE)=H_*(\Omega X; k)$$
provided that either (Case 0) $X$ is simply connected or (Case $p$) $X$ is connected,  $\pi_1(X)$ is a finite 
$p$-group $k=\Fp$  and $X$ is $p$-complete \cite{DwyerEM}. 
This immediately gives a very rich source of examples that we will revisit frequently. 

Provided we have either Case $0$ or Case
$p$, then $C^*(X)$ is dc-complete. Indeed, the Rothenberg-Steenrod equivalence 
$$\Hom_{C_*(\Omega X)}(k, k)\simeq C^*(X)$$
holds for any connected space $X$. Accordingly, provided the Eilenberg-Moore
equivalence holds,  the double centralizer map $\kappa$ is an equivalence. 

Finally we may consider proxy-smallness.
\begin{lemma}
\label{lem:proxysmall}
(i) If $\dim_k H_*(\Omega Y)$ is finite  then $k$ is small over
$C^*(Y)$.

\noindent 
(ii) If $Y$ falls under  Case 0 or Case $p$, there is a map $X\lra Y$ with homotopy fibre $F$, and 
$\dim_k H_*(\Omega Y)$ and $\dim_k H^*(F)$ are finite, then $k$ is
proxy-small over $C^*(X)$.
\end{lemma}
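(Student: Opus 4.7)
The plan is to reduce both parts to the Eilenberg--Moore identifications available in Case 0 or Case $p$ (see Subsection \ref{subsec:cochains}), and then to exploit Morita-theoretic base change along $C^*(Y) \to C^*(X)$.

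For part (i), the Eilenberg--Moore equivalence gives $\cE := \Hom_{C^*(Y)}(k,k) \simeq C_*(\Omega Y; k)$, so the hypothesis $\dim_k H_*(\Omega Y) < \infty$ says that $\Ext^*_{C^*(Y)}(k,k) = \pi_*(\cE)$ is finite-dimensional and concentrated in a bounded range. For the augmented commutative ring spectrum $R = C^*(Y) \to k$, one then constructs $k$ as a finite cell $R$-module directly, by attaching cells iteratively indexed by a homogeneous basis of $\pi_*(\cE)$ with attaching maps controlled by the Ext-algebra structure; boundedness ensures that the process terminates, giving $R \finbuilds k$.

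For part (ii), set $K := C^*(F)$, which Eilenberg--Moore for the fibration $F \to X \to Y$ identifies with the base-change $C^*(X) \otimes_{C^*(Y)} k$ as an $R := C^*(X)$-module. The smallness $R \finbuilds K$ is immediate: the exact, coproduct-preserving functor $C^*(X) \otimes_{C^*(Y)} -$ carries the finite $C^*(Y)$-cell presentation of $k$ from (i) to a finite $R$-cell presentation of $K$. The condition $k \finbuilds K$ follows from $\dim_k H^*(F) < \infty$, which makes $K = C^*(F)$ small as a $k$-module, hence finitely built from $k$ in $k$-mod and, after restriction along $R \to k$, also in $R$-mod.

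The main obstacle is the third condition, $K \builds k$. The approach is to use the basepoint-induced augmentation $K = C^*(F) \to C^*(\mathrm{pt}) = k$, whose precomposition with the unit $R \to K$ agrees with the standard augmentation $R \to k$; this gives a fibre triangle $K' \to K \to k$ in $R$-mod, reducing the task to placing $K'$ in $\cell(R,K)$. I would then apply Eilenberg--Moore to the shifted fibration $\Omega Y \to F \to X$ to identify $K \otimes_R k \simeq C^*(\Omega Y)$, a finite-dimensional $k$-module, and iteratively factor $K'$ through a bar-type filtration whose successive fibres are $K$ tensored with small $k$-modules (and hence $K$-cellular). The delicate point is to verify that this filtration converges to $K'$ inside $\cell(R,K)$: one needs $\Hom_R(K,-)$ to faithfully detect $k$-cellularity of $R$-modules, which combines the smallness of $K$ with the finiteness of $\cE$ provided by the hypothesis on $\Omega Y$.
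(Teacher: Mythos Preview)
Your Part (i) is in the right direction but is not a proof as written: knowing that $\pi_*(\cE)$ is finite does not hand you a finite cell decomposition of $k$, and ``attaching cells indexed by a basis of $\pi_*(\cE)$'' does not describe a construction that visibly terminates. (You are also invoking the Eilenberg--Moore identification $\Hom_{C^*(Y)}(k,k)\simeq C_*(\Omega Y)$, which needs Case~0 or Case~$p$; the lemma does not assume this for Part (i).) The paper instead runs Morita the other way: since $\dim_k H_*(\Omega Y)<\infty$ one has $k\finbuilds C_*(\Omega Y)$ as $C_*(\Omega Y)$-modules, and applying the contravariant functor $\Hom_{C_*(\Omega Y)}(-,k)$ together with the Rothenberg--Steenrod equivalence $\Hom_{C_*(\Omega Y)}(k,k)\simeq C^*(Y)$ (valid for any connected $Y$) gives $C^*(Y)\finbuilds k$ in one line.

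For Part (ii), your arguments for $R\finbuilds K$ and $k\finbuilds K$ match the paper. The genuine gap is in $K\builds k$: you set up a bar-type filtration and then concede that its convergence is the ``delicate point'', left unresolved. This is much harder than necessary. The paper's argument is essentially trivial: $F$ is connected, so $K=C^*(F)$ is a coconnective $R$-algebra with $\pi_0(K)=k$, and one builds $k$ from $K$ by killing homotopy groups. Concretely, $k$ is a $K$-module via the augmentation $C^*(F)\to k$, and every $K$-module lies in the localizing subcategory of $K$-modules generated by $K$; restriction of scalars along $R\to K$ preserves triangles and coproducts, so $k$ lands in $\cell(R,K)$. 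No filtration or convergence argument is needed.
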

\begin{proof}
Since $\dim_kH_*(\Omega Y)$ is finite, 
$$k\finbuilds C_*(\Omega Y).$$
 Applying $\Hom_{C_*(\Omega Y)}(\cdot ,
k)$ we see 
$$C^*(Y)\simeq \Hom_{C_*(\Omega Y)}(k,k)\finbuilds \Hom_{C_*(\Omega
  Y)}(C_*(\Omega Y),k)\simeq k. $$
This proves Part (i). 

For Part (ii), let $R=C^*(X)$, $S=C^*(Y)$ and note
$C^*(X)\tensor_{C^*(Y)}k\simeq C^*(F)$ by the Eilenberg-Moore
theorem. Now take $K= C^*(F)$. We note that $k\finbuilds K$ by
hypothesis. Since $S\finbuilds k$ by Part (i), we may apply
$R\tensor_S (\cdot)$ and deduce 
$$R=R\tensor_S S \finbuilds R\tensor_S k=C^*(F).$$
Finally, $C^*(F)$ builds $k$ by killing homotopy groups, since $F$ is
connected.  
\end{proof}

\begin{cor}
\label{cor:proxysmallegs}
(Case 0) If $X$ is simply connected and $k=\Q$ and if $H^*(X)$ is
Noetherian then $\Q$ is proxy small over $C^*(X; \Q)$. 

\noindent
(Case $p$) If $k=\Fp$ and $X$ is the $p$-completion of $BG$  then $\Fp$ is proxy small over $C^*(X; \Fp)$. 
\end{cor}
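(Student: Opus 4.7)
The plan in each case is to apply Lemma~\ref{lem:proxysmall}(ii) to a well-chosen map $f\colon X\to Y$: we arrange that $Y$ is simply connected with $H^*(Y;k)$ polynomial on generators of even degree (so $H^*(\Omega Y;k)$ is exterior and finite-dimensional), and that $H^*(X;k)$ is finitely generated as a module over the image of $H^*(Y;k)$ in $H^*(X;k)$. Eilenberg--Moore then expresses $H^*(F;k)$ as a Tor over a polynomial ring of finite global dimension, forcing $\dim_k H^*(F;k)<\infty$.

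\emph{Case~0.} Since $H^*(X;\Q)$ is a graded-commutative Noetherian $\Q$-algebra, choose finitely many homogeneous algebra generators and retain those of even positive degree, say $x_1,\dots,x_r$ with $|x_i|=2a_i$; represent each as a map $X\to K(\Q,2a_i)$ and take $f\colon X\to Y:=\prod_{i=1}^{r}K(\Q,2a_i)$. Then $Y$ is simply connected, $H^*(Y;\Q)=\Q[y_1,\dots,y_r]$ with $y_i\mapsto x_i$, and $\Omega Y=\prod K(\Q,2a_i-1)$ has exterior rational cohomology of dimension $2^r$. By Eilenberg--Moore,
$$H^*(F;\Q)\cong \Tor^{\Q[y_1,\dots,y_r]}_*(\Q,H^*(X;\Q)).$$
In characteristic zero, graded commutativity forces each odd-degree element to square to zero, so $H^*(X;\Q)$ is spanned over its even subring $\Q[x_1,\dots,x_r]$ by the squarefree products of the (finitely many) odd algebra generators; in particular it is a finitely generated $\Q[y_1,\dots,y_r]$-module. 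Since $\Q[y_1,\dots,y_r]$ has finite global dimension $r$, the Tor is finite-dimensional, and Lemma~\ref{lem:proxysmall}(ii) gives the conclusion.

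\emph{Case~$p$.} Choose a faithful unitary representation $\rho\colon G\hookrightarrow U(n)$, and let $f\colon X=BG^{\wedge}_p\to Y:=BU(n)^{\wedge}_p$ be the $p$-completion of the induced map of classifying spaces. The target $Y$ is simply connected with $H^*(Y;\Fp)=\Fp[c_1,\dots,c_n]$ polynomial in the Chern classes, so $H^*(\Omega Y;\Fp)$ is exterior of dimension $2^n$. Eilenberg--Moore gives
$$H^*(F;\Fp)\cong \Tor^{\Fp[c_1,\dots,c_n]}_*(\Fp,H^*(BG;\Fp)),$$
and the Venkov--Evens finite-generation theorem asserts exactly that $H^*(BG;\Fp)$ is a finitely generated module over the subring $\Fp[c_1,\dots,c_n]\subseteq H^*(BG;\Fp)$ generated by the Chern classes of $\rho$. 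The same finite-global-dimension argument then yields $\dim_{\Fp}H^*(F;\Fp)<\infty$, and Lemma~\ref{lem:proxysmall}(ii) concludes. The hardest step is this finite-generation claim: in Case~0 it is essentially formal from graded commutativity, but in Case~$p$ it is the substantial classical theorem of Venkov and Evens.
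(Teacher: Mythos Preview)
Your proof is correct and follows the same overall strategy as the paper: in each case you map $X$ to a space $Y$ with polynomial cohomology on even generators and verify the hypotheses of Lemma~\ref{lem:proxysmall}(ii).

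In Case~0 your argument is essentially the paper's. The paper phrases the choice of $Y$ as picking a polynomial subring $P=\Q[V]\subseteq H^*(X)$ over which $H^*(X)$ is finite (Noether normalization), while you take all even-degree algebra generators and argue directly that the squarefree products of the odd generators span $H^*(X)$ over the even subalgebra. Both routes land in the same place.

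In Case~$p$ you and the paper use the same map $BG\to BU(n)$ coming from a faithful representation, but you diverge on how to check $\dim_{\Fp}H^*(F)<\infty$. You compute $H^*(F)$ via the Eilenberg--Moore spectral sequence and then invoke the Venkov--Evens theorem to get finite generation of $H^*(BG)$ over $\Fp[c_1,\dots,c_n]$. The paper instead just observes that the homotopy fibre of $BG\to BU(n)$ is the compact manifold $U(n)/G$, which has finite-dimensional cohomology on the nose. This is strictly more elementary: it sidesteps Venkov--Evens entirely (indeed, the fibration $U(n)/G\to BG\to BU(n)$ is exactly how one \emph{proves} Venkov's theorem). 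Your argument is not wrong, but you are invoking as a black box a theorem whose standard proof is precisely the observation the paper uses directly. A small stylistic point: your displayed isomorphisms $H^*(F)\cong\Tor$ are really the $E_2$-pages of Eilenberg--Moore spectral sequences; the conclusion only needs that $E_2$ is finite-dimensional, so this is harmless, but it would be cleaner to say so.
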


\begin{proof}
For Case $0$, since $H^*(X)$ is Noetherian, we may choose a polynomial
subring $P=\Q [V]$ on finitely many even degree generators. We have
$\Q [V]=H^*(Y)$ where $Y=K(V)$ is the corresponding product of even
Eilenberg-MacLane spaces. The inclusion $P=K[V]\lra H^*(X)$ is realized by
a map $X \lra Y$. The space $\Omega Y$ is a product of finitely many
odd Eilenberg-MacLane spaces, and rationally this is a product of
spheres, so Part (i) of Lemma \ref{lem:proxysmall}
applies. Furthermore since $H^*(X)$ is a finitely generated $P$
module, it has a finite resolution by finitely generated projectives
and the spectral sequence for calculating
$\pi_*(C^*(X)\tensor_{C^*(Y)}\Q) =H^*(F)$ has finite dimensional $E_2$
term and so the hypotheses of Part (ii) of Lemma \ref{lem:proxysmall}
hold. 

For Case $p$ we note that the $p$-completion of  $Y=BU(n)$ satisfies
the conditions of Part (i) of Lemma \ref{lem:proxysmall}. Now, given $G$,  choose $n$ so that $G$
admits a faithful representation in $U(n)$ and apply Part (ii) of
Lemma \ref{lem:proxysmall} to the $p$-completion of the fibration $U(n)/G\lra
BG\lra BU(n)$.

\end{proof}

\section{Exterior algebras}
\label{sec:exterior}

There are many ways to use Morita theory, but there is a very elementary and striking one which 
illustrates that it is significant in even in the  very simple case
when $\pi_*(\cE)$ is an exterior algebra on one generator. 

\subsection{Exterior algebras over $\bbF_p$ on a generator of degree
  $-1$}
\label{subsec:specialexterior}

To start with, we note that is easy to see that if $\cE$ is an
$\Fp$-algebra with $\pi_*(\cE) =\Lambda_{\Fp}(\tau)$ exterior on one generator, then $\cE$
is formal. However if $\pi_*(\cE) =\Lambda_{\Fp}(\tau)$ 
 but $\cE$ itself is only known to be a $\Z$-algebra, the
situation is considerably more complicated.  

We may use Morita theory to give a classification \cite{DGIexterior}
in the special case that the exterior generator is in degree $-1$. We
will return to discuss the more general case in Subsection
\ref{subsec:generalexterior}. 

\begin{thm}
Differential graded algebras $\cE$ with $H_*(\cE)$ exterior over
$\bbF_p$ on a single generator of degree $-1$ (up to
quasi-isomoprhism) are in bijective correspondence with complete
discrete valuation rings with residue field $\bbF_p$ (up to
isomorphism of rings). 
\end{thm}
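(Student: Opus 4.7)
The plan is to exhibit the bijection as a Morita/Koszul-duality correspondence. Given $\cE$ as in the hypothesis, form $R_{\cE}:=\Hom_{\cE}(\Fp,\Fp)$, where $\Fp$ carries the canonical $\cE$-module structure coming from the augmentation $\cE\to H_0\cE=\Fp$. Conversely, given a complete DVR $R$ with residue field $\Fp$ and uniformizer $\pi$, form $\cE_R:=\Hom_R(\Fp,\Fp)$. The direction $R\mapsto\cE_R$ is immediate: the Koszul resolution $R\xrightarrow{\pi}R$ of $\Fp$ yields $\pi_*\cE_R=\Ext^*_R(\Fp,\Fp)=\Lambda_{\Fp}(\tau)$ with $|\tau|=-1$.

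The substantive direction is to show that $R_{\cE}$ is concentrated in degree $0$ and that $\pi_0R_{\cE}$ is a complete DVR with residue field $\Fp$. I analyze the convergent spectral sequence
\[
E_2^{s,t}=\Ext^{s,t}_{\pi_*\cE}(\Fp,\Fp)\Longrightarrow \pi_{t-s}(R_{\cE}).
\]
Using the minimal free resolution of $\Fp$ over $\Lambda_{\Fp}(\tau)$, whose $s$th term is a rank-one free module with generator in internal degree $-s$, one computes that the $E_2$-page is $\Fp[y]$ with $y$ in bidegree $(1,1)$. Thus $y^n$ has bidegree $(n,n)$, lying on the diagonal $t-s=0$. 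Since the differentials $d_r$ have bidegree $(r,r-1)$ they lower $t-s$ by $1$ and hence vanish for degree reasons. So $E_\infty=E_2$, and strong convergence (valid because $\Fp$ is proxy-small over $\cE$, with small proxy supplied by the cofibre of a chain-level lift $\tau:\Sigma^{-1}\cE\to\cE$) shows that $\pi_n(R_{\cE})=0$ for $n\neq 0$ and that $\pi_0R_{\cE}$ carries a complete descending filtration whose associated graded ring is $\Fp[y]$.

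This structural information forces $\pi_0R_{\cE}$ to be a complete DVR with residue field $\Fp$: the residue field is $E_\infty^{0,0}=\Fp$; the associated graded is a polynomial domain, so $\pi_0R_{\cE}$ is itself a domain; and the maximal ideal is principal by complete Nakayama applied to the one-dimensional cotangent space $\fm/\fm^{2}$. To close the bijection I apply the Morita equivalences of Section~\ref{sec:Morita}: dc-completeness of complete DVRs (as established in Section~\ref{sec:proxysmall}) gives $R\simeq\Hom_{\cE_R}(\Fp,\Fp)$ for every complete DVR $R$, while proxy-smallness of $\Fp$ over $\cE$ gives $\cE\simeq\Hom_{R_{\cE}}(\Fp,\Fp)$, so the two constructions are mutually inverse on equivalence classes.

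The main obstacle is recovering the full ring structure of $\pi_0R_{\cE}$, not merely its associated graded, from the DGA $\cE$. Distinct $\Z$-linear DGA structures realizing $\Lambda_{\Fp}(\tau)$ produce distinct hidden multiplicative extensions in the spectral sequence, and these extensions encode precisely the distinguishing data of a complete DVR, namely its ramification index and Eisenstein polynomial (e.g.\ a relation $p=u\cdot y^e$ for some unit $u$ and ramification index $e$ in the mixed-characteristic case, or the vanishing $p=0$ in the equal-characteristic case $R=\Fp[[y]]$). Verifying that every complete DVR arises in this way, and that quasi-isomorphic DGAs yield isomorphic DVRs, is the delicate deformation-theoretic heart of the argument.
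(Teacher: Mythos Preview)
Your approach is essentially the paper's: pass from $\cE$ to $R_\cE=\Hom_\cE(\Fp,\Fp)$ and from $R$ to $\cE_R=\Hom_R(\Fp,\Fp)$, compute via the Ext spectral sequence that $\pi_*R_\cE$ is concentrated in degree $0$ with associated graded $\Fp[y]$, read off the complete DVR structure, and close the loop with double-centralizer equivalences.

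Two comments. First, your final paragraph undercuts what you have already done. There is no separate ``deformation-theoretic heart'': the double-centralizer equivalences $R\simeq\Hom_{\cE_R}(\Fp,\Fp)$ and $\cE\simeq\Hom_{R_\cE}(\Fp,\Fp)$ \emph{are} the bijection. The hidden multiplicative extensions you worry about are exactly what the functor $\cE\mapsto\pi_0R_\cE$ records, and surjectivity onto all complete DVRs follows from $R\simeq R_{\cE_R}$, not from any case analysis of Eisenstein data. Second, the inference ``proxy-smallness of $\Fp$ over $\cE$ gives $\cE\simeq\Hom_{R_\cE}(\Fp,\Fp)$'' is not valid as stated: proxy-smallness controls cellular approximation, not dc-completeness. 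What you actually use is that $\Fp$ is genuinely small over $\cE$ (it is the cofibre of a self-map of $\cE$) and that $\cE$ is itself $\Fp$-cellular as an $\cE$-module (its homotopy is finite-dimensional over $\Fp$), so the Morita equivalence applies symmetrically. You also omit the check that $\pi_0R_\cE$ is commutative; this follows because the associated graded $\Fp[y]$ is commutative and the filtration is complete and separated, forcing all commutators to zero.
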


\begin{proof}
The idea is to associate to any such $\cE$ the endomorphism DGA $R=\Hom_{\cE}(\bbF_p, \bbF_p)$.
Evidently the spectral sequence for calculating $H_*(R)$ collapses at
$E_2$ with value $\bbF_p [\overline{x}]$ for an element $\overline{x}$
of total degree 0. This shows $R$ is a filtered ungraded ring with
this as associated graded ring. One argues that it is commutative and
complete with residue field $\bbF_p$. If one starts with $R$, its
maximal ideal is principal, generated by an 
element $x$ and we may form a complex $\uFp=(R\stackrel{x}\lra R)$. The DGA  $\cE =\Hom_R(\uFp, \uFp)$ has
homology  exterior on a generator of degree $-1$, and the double centralizer completion map
$$R\lra \Hom_{\cE}(\bbF_p, \bbF_p)$$
is an equivalence.  
\end{proof}

\subsection{General algebras with exterior homotopy}
\label{subsec:generalexterior}

Let us consider ring spectra  $\cE$ with $\pi_*(\cE)=\Lambda_{\Fp}(\tau)$
where $\tau$ is of degree $d$. We have ring maps $\bbS \lra \Z \lra
\Z/p$, and we have already observed that if we restrict attention to
$\Z/p$-algebras, there is a unique such $\cE$. If $d=-1$  we
classified $\Z$-algebras in Subsection \ref{subsec:specialexterior},
and found there were a lot of them. If $d=0$ the answers for
$\bbS$-algebras and $\Z$-algebras are the same by Shipley's Theorem,
and  there is a unique such algebra. 

For $d\geq 1$ there is an obstruction theory, and the answer is given
by Dugger and Shipley \cite{DuggerShipley}. Indeed, we are considering
square zero extensions of $\Z/p$ by the bimodule $\Z/p$ in degree $d$, 
and hence (with $A=\bbS$ or $\Z$),  the $\A$-algebras  $\cE$
are classified by the Hochschild cohomology group  $HH^{d+2}(\Z/p | \A; \Z/p)$, with orbits under
$Aut (\Z/p)$ giving isomorphic algebras $\cE$. In fact the
Hochschild cohomology rings are polynomial and divided power algebras
on a single degree 2 generator:
$$HH^*(\Z/p | \Z; \Z/p)=\Z/p [x_2^{\Z}] \mbox{ and } HH^*(\Z/p | \bbS;
\Z/p)=\Gamma_{\Z/p} [x_2^{\bbS}]. $$
We conclude that for any odd $d \geq 1$ there is single formal algebra
and  for any even $d \geq 2$ there are precisely two isomomorphism
classes of  $\Z$-algebras and  precisely two isomomorphism
classes  of  $\bbS$-algebras. In each case one of them is formal and
one is not. 

The really striking phenomenon is that restriction 
along $\bbS \lra \Z$, does not induce a bijection of isomorphism
types. Indeed, one may check that the generator $x_2^{\Z}$ restricts
non-trivially (so we may take $x_2^{\bbS}$ to be that restriction). It
follows that there is a bijection until we reach $(x_2^{\Z})^p$, which restricts to
zero in $\Gamma (x_2^{\bbS})$. In other words, the two $\Z$-algebras 
$\cE, \cE'$ with $d=2p-2$ become equivalent as $\bbS$-algebras: these 
two inequivalent DGAs are topologicially equivalent. It is hard to be
explicit when $d$ is not small, but if $p=2$ and $d=2$, the DGA 
$\Z [e \st de=2]/(e^3, 2e^2)$ is not formal as a $\Z$-algebra but it
is formal as an $\bbS$-algebra. Finally, the $\bbS$-algebra $\cE$ with
$d=2p-2$ which is  not realizable as a  $\Z$-algebra, is the truncation of the $d$th
Morava $K$-theory.

\subsection{Classification of  free rational $G$-spectra.}
In another direction, working over the rationals we note that for any compact Lie group $G$ the DGA
$C_*(G)$ has homology $H_*(G)$ exterior on odd degree generators. It might not be apparent that it 
is formal. However, we may consider $R=\Hom_{C_*(G)}(k,k)\simeq C^*(BG)$; since this is commutative and has cohomology 
which is polynomial on even degree generators,  it is formal and hence equivalent to $H^*(BG)$. It follows that $C_*(G)\simeq H_*(G)$. This is the starting point for classifying
free rational $G$-spectra \cite{gfreeq, gfreeq2}.

\part{Regular rings}

We have now set up the basic machinery and the rest of the lectures will investigate
homotopy invariant counterparts of classical definitons in commutative
algebra of local rings. The justification consists of the dual facts that the new definition reduces to the old in 
the classical setting and that the new definition covers and
illuminates examples in new contexts. Regular local rings are the most 
basic and best behaved objects, so it makes sense to start with
these. 

Section \ref{sec:regular} recalls the three styles of definition from
commutative algebra and explains how to give homotopy invariant
versions, and then illustrates them in our main examples. Section
\ref{sec:normalizable} explains how we can use this class of ring spectra to
give a homotopy invariant counterpart of the notion of a 
finitely generated module, at least for `normalizable' ring spectra. 

\section{Regular ring spectra}
\label{sec:regular}
In commutative algebra there are three styles for a definition of
a regular local ring: ideal theoretic, in terms of the growth of
the Ext algebra and a version for modules. We begin in Subsection 
\ref{subsec:regcommalg} by recalling the commutative algebra, 
where the conditions are equivalent. We  then turn to other contexts, 
where the conditions may differ, and consider which one is most appropriate.  

\subsection{Commutative algebra.}
\label{subsec:regcommalg}
The following definitions are very familiar; we have introduced a slightly more
elaborate terminology to smooth the transition to other contexts. 
The prefix s- signifies that the definition is {\em structural}, the prefix g- signifies
that the definition is in terms of the {\em growth} rate, and the prefix m- signifies
that the definition is in terms of the {\em module} category.

\begin{defn}
(i) A local Noetherian ring $R$ is {\em s-regular} if the maximal ideal
is generated by a regular sequence.

(ii) A local Noetherian ring $R$ is {\em g-regular} if $\Ext_R^*(k,k)$ is
finite dimensional.

(iii) A local Noetherian ring $R$ is {\em m-regular} if every finitely generated
module is small in the derived category $\sfD(R)$.
\end{defn}

These are equivalent by the Auslander-Buchsbaum-Serre theorem; one
might think of this as the first theorem of homotopy invariant
commutative algebra.

\subsection{Regularity for ring spectra}
The one definition that is easy to adapt is g-regularity, at least when $k$ is a field. 
This is a basic input to  the entire theory.
It is also convenient to have a name for when the coefficient ring is regular. 

\begin{defn}
(i) We say that $R$ is {\em c-regular} when the coefficient ring $\pi_*(R)$ is regular.

(ii) We say that 
$R$ is {\em g-regular} if $\pi_*(\Hom_R(k,k))$ is finite dimensional
over $k$. 
\end{defn}

It is obvious from the spectral sequence 
$$\Ext_{R_*}^{*,*}(k,k)\Rightarrow \pi_*(\Hom_R(k,k))$$
that c-regular implies g-regular. The converse is very far from being true, as we shall 
see shortly.

It is more subtle to consider s-regular ring spectra.

The s-regularity condition on rings states that $\fm$ is generated by a regular 
sequence $x_1, \ldots , x_r$, so  that there are  short exact sequences
$$R\stackrel{x_1}\lra R \lra R/(x_1)$$
$$R/(x_1)\stackrel{x_2}\lra R/(x_1) \lra R/(x_1,x_2)$$
and 
$$R/(x_1, \ldots , x_{r-1})\stackrel{x_r}\lra R/(x_1, \ldots , x_{r-1}) \lra R/\fm =k.$$
In other words,  we may start with $R$, and successively  factor out a
regular element until we get to $k$. 

If we now remember the degrees, a single instance is the (additive) exact sequence  of modules
$$\Sigma^{n}R\stackrel{x}\lra R \lra R/(x). $$
In other words, the ring $R/(x) $ is equivalent to the Koszul complex $K(x)=(R\stackrel{x}\lra R)$. 
If we think multiplicatively this gives a cofibre sequence \footnote{If $S\lra R$ is a map of rings and we
have a map $R\lra k$, the {\em cofibre} ring is $Q=R\tensor_Qk$. This corresponds to the fact
that in  algebraic geometry and topology this cofibre ring is often  the ring of functions on the geometric fibre.}
$$R\lra K(x)\lra K(x)\tensor_Rk = \Lambda (\tau)$$
of rings, where $\tau$ is a generator of degree $n+1$. The definition is now obtained by iterating 
this construction. 

\begin{defn}
We say that $R$ is {\em s-regular} if there are cofibre sequences of rings
$$R=R_0\lra R_1\lra \Lambda_1, R_1\lra R_2\lra \Lambda_2, \cdots ,R_{r-1}\lra R_r\lra \Lambda_r,$$ 
with $R_r\simeq k$ and $\pi_*(\Lambda_i) $ exterior over $k$ on one
generator.  
\end{defn}


\begin{lemma}
If $R$ is s-regular then it is also g-regular.
\end{lemma}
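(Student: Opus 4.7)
The plan is to proceed by induction on the length $r$ of the tower
$$R = R_0 \to R_1 \to \cdots \to R_r \simeq k.$$
The base case $r=0$ is trivial since $\Hom_R(k,k) = \Hom_k(k,k) = k$. For the inductive step I will try to reduce the computation of $\pi_*(\Hom_R(k,k))$ to that of $\pi_*(\Hom_{R_1}(k,k))$, which is finite-dimensional by the inductive hypothesis applied to $R_1$ (whose tower has length $r-1$).

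The key tool is the change-of-rings adjunction, which gives
$$\Hom_R(k,k) \simeq \Hom_{R_1}(R_1 \otimes_R k, k) \simeq \Hom_{R_1}(\Lambda_1, k),$$
using the footnoted identification $\Lambda_1 = R_1 \otimes_R k$ coming from the cofibre-ring construction. So the task becomes: control $\Hom_{R_1}(\Lambda_1, k)$ using the fact that $\pi_*(\Lambda_1) = \Lambda_k(\tau)$ is two-dimensional over $k$.

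For this I will argue that, as an $R_1$-module, $\Lambda_1$ sits in a triangle
$$\Sigma^{|\tau|} k \to \Lambda_1 \to k,$$
obtained as the cofibre of the augmentation $\Lambda_1 \to k$ which kills the exterior generator (the augmentation is legitimate because $R_1$ acts on $\Lambda_1$ through $R_1 \to k \to \Lambda_1$, so $k$ is naturally an $R_1$-module quotient of $\Lambda_1$). Applying $\Hom_{R_1}(-,k)$ then yields a triangle
$$\Sigma^{-|\tau|}\Hom_{R_1}(k,k) \to \Hom_{R_1}(\Lambda_1, k) \to \Hom_{R_1}(k,k),$$
and since the outer two terms have finite-dimensional homotopy by induction, so does the middle, completing the proof.

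The step I expect to require the most care is the identification $R_1 \otimes_R k \simeq \Lambda_1$ and the associated $R_1$-module triangle for $\Lambda_1$, i.e., verifying that the loose description of a "cofibre sequence of rings" in the definition of s-regularity gives precisely the module-level structure that feeds into the induction. Once the $R_1$-module triangle for $\Lambda_1$ is available, the remaining steps are routine invocations of adjunctions and finiteness in triangles.
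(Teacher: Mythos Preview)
Your proof is correct and follows the same inductive route as the paper: both reduce $\Hom_R(k,k)$ to $\Hom_{R_1}(\Lambda_1,k)$ via the change-of-rings adjunction $\Hom_R(k,k)\simeq\Hom_{R_1}(R_1\otimes_R k,k)$, and then use that $\Lambda_1$ is finitely built from $k$ over $R_1$ (the paper phrases this as $k\models \Lambda$ and applies $\Hom_B(-,k)$, you spell it out as an explicit two-term triangle). One small correction to your parenthetical: $R_1$ does \emph{not} act on $\Lambda_1$ through $R_1\to k\to\Lambda_1$ but rather through the pushout structure map $R_1\to\Lambda_1=R_1\otimes_R k$; the augmentation $\Lambda_1\to k$ is nonetheless $R_1$-linear because it is a map of $R_1$-algebras (it comes from the universal property of the pushout applied to the augmentation $R_1\to k$ and $\mathrm{id}_k$), and that is all you need for the triangle.
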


\begin{proof}
We need to work our way along the sequence of cofibrations. Since $R_r\simeq k$ is obviously g-regular, it suffices to oberve that 
given a cofibre sequence $C\lra  B \lra \Lambda$ with $B$   g-regular,
then $C$ is also g-regular. Suppose then that $\Hom_B(k,k)$ is
finitely built from $k$. Since $k\finbuilds \Lambda$ it follows that
$$k\finbuilds \Hom_B(k, k)\finbuilds \Hom_B(\Lambda, k)
\simeq \Hom_k(k\tensor_B\Lambda, k).$$ 
However 
$$
k\tensor_B \Lambda 
\simeq 
k\tensor_B B \tensor_C k 
\simeq 
k\tensor_C k .$$
so that 
$$\Hom_k(k\tensor_B\Lambda, k)\simeq 
\Hom_k(k\tensor_Ck,k)\simeq \Hom_C(k,k),$$
showing $\Hom_C(k,k)$ is finitely built by $k$ as required. 
\end{proof}

However, there is no good notion of  m-regularity for ring spectra.  The problem is that  we do not have a homotopy invariant definition of finite 
generation in general. However, we will turn this around, and  in
Section \ref{sec:normalizable} we will show that one can 
use the supposed equivalence of g-regularity and the putative m-regularity conditions to 
{\em define} finite generation.

\subsection{Regularity for rational spaces}

We have seen in Subsection \ref{subsec:cochains},  
that for simply connected rational spaces $X$, if 
$R=C^*(X;\Q)$ we have $\pi_*(\cE)=H_*(\Omega X)$. Accordingly a
g-regular rational space is one with $H_*(\Omega X)$ finite
dimensional. 

\begin{prop}
A simply connected rational space $X$ is g-regular if and only if it
is a finite product of even Eilenberg-MacLane spaces. Its  cohomology 
is therefore polynomial on even degree generators and it is also s-regular.
\end{prop}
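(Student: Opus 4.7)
The plan is to reduce g-regularity to a purely homotopy-theoretic condition on $\pi_*(X)\tensor\Q$ via the Milnor--Moore theorem, then produce the product decomposition by running the rational Postnikov tower, and finally deduce s-regularity from formality.

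By the Eilenberg--Moore identification recalled in Subsection \ref{subsec:cochains}, $\pi_*(\cE) = H_*(\Omega X;\Q)$, so g-regularity is exactly the condition that $H_*(\Omega X;\Q)$ is finite dimensional. The easy direction is immediate: if $X \Qeq \prod_{i=1}^r K(\Q,2n_i)$, then $\Omega X \Qeq \prod_{i=1}^r K(\Q,2n_i-1)$, and rationally $K(\Q,2n-1) \simeq S^{2n-1}$, so $H_*(\Omega X;\Q)$ is exterior on $r$ odd-degree generators, hence finite dimensional.

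For the converse, I would apply the Milnor--Moore theorem to identify $H_*(\Omega X;\Q)$, as a graded Hopf algebra, with the universal enveloping algebra $U(L)$ of the Samelson Lie algebra $L_n = \pi_{n+1}(X)\tensor\Q$. The graded PBW theorem over a characteristic-zero field gives a vector-space isomorphism $U(L) \cong S(L_{\mathrm{even}}) \tensor \Lambda(L_{\mathrm{odd}})$. Finite dimensionality therefore forces $L_{\mathrm{even}}=0$ and $L_{\mathrm{odd}}$ finite dimensional, i.e.\ $\pi_{\mathrm{odd}}(X)\tensor\Q = 0$ and $\bigoplus_n \pi_{2n}(X)\tensor\Q$ finite dimensional. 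Setting $V_{2n} = \pi_{2n}(X)\tensor\Q$, only finitely many $V_{2n}$ are nonzero. Now climb the rational Postnikov tower inductively: assuming $P_{\le 2n-2}(X_\Q) \Qeq \prod_{m<n} K(V_{2m},2m)$, the cohomology of this product is a tensor product of polynomial algebras on even-degree generators, hence concentrated in even cohomological degrees. The next $k$-invariant lives in $H^{2n+1}$ with $V_{2n}$ coefficients, and therefore vanishes, so $P_{\le 2n}(X_\Q) \Qeq P_{\le 2n-2}(X_\Q) \times K(V_{2n},2n)$. Since the tower is finite, we conclude $X \Qeq \prod_{i=1}^r K(\Q,2n_i)$.

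From the product decomposition, $H^*(X;\Q) = \Q[x_{2n_1},\ldots,x_{2n_r}]$ is polynomial on even-degree generators. Since this cohomology algebra is free graded-commutative on even-degree classes, one can choose cocycle representatives for the generators and obtain a zig-zag of quasi-isomorphisms, showing that $R = C^*(X;\Q)$ is formal as a commutative ring spectrum over $\Q$. With this model, $x_{2n_1},\ldots,x_{2n_r}$ form a regular sequence generating the augmentation ideal; successively killing them gives cofibre sequences of rings $R_{i-1} \to R_i \to R_i \tensor_{R_{i-1}} \Q$, and the final factor computes as $\Q \tensor^L_{\Q[x_{2n_i}]} \Q \simeq \Lambda_\Q(\tau_i)$ with $|\tau_i| = 2n_i - 1$, exactly exhibiting s-regularity.

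The main obstacle is the converse direction: the combination of the Milnor--Moore/PBW step with the inductive vanishing of rational $k$-invariants is where all the content sits. Once those are in place the formality statement for polynomial algebras on even generators is standard, and s-regularity is an immediate consequence.
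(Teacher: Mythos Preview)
Your proof is correct and follows essentially the same architecture as the paper: extract from the finite-dimensionality of $H_*(\Omega X;\Q)$ that $\pi_*(X)\tensor\Q$ is finite dimensional and concentrated in even degrees, then run the rational Postnikov tower and observe that the $k$-invariants land in odd-degree cohomology groups of an even-polynomial algebra, hence vanish.

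The only difference worth noting is in how the first step is justified. You invoke Milnor--Moore and graded PBW to write $H_*(\Omega X;\Q)\cong S(L_{\mathrm{even}})\tensor\Lambda(L_{\mathrm{odd}})$ as a vector space. The paper instead appeals directly to the fact that a simply connected rational loop space splits as a product of Eilenberg--MacLane spaces, so $H_*(\Omega X;\Q)$ is a tensor product of polynomial algebras (from even factors) and exterior algebras (from odd factors). These are two faces of the same coin; your route is the algebraic one, the paper's is the topological one, and both arrive at the identical conclusion about $\pi_*(X)\tensor\Q$.

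Your explicit argument for s-regularity via formality of $C^*(X;\Q)$ and killing the polynomial generators is a nice addition; the paper simply asserts s-regularity in the statement without spelling it out. An alternative, more in the spirit of the paper's space-level definition of s-regularity, is to peel off the Eilenberg--MacLane factors one at a time via the fibrations $\prod_{j\geq i}K(\Q,2n_j)\lla \prod_{j\geq i+1}K(\Q,2n_j)\lla S^{2n_i-1}$.
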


\begin{proof} 
To start with,  we note that an even Eilenberg-MacLane space $X=K(\Q,2n)$
is g-regular since its loop space is $K(\Q, 2n-1)\simeq S^{2n-1}$. On
the other hand an odd  Eilenberg-MacLane space $X=K(Q,2n+1)$ 
is not g-regular since its loop space is $K(\Q, 2n)$ which has
homology polynomial on one generator of degree $2n$. 

Finally, since $\Omega X$ splits as a product of Eilenberg-MacLane
space for any rational space,  we see $H_*(\Omega X) $ is finite
dimensional if and only if  the homotopy groups of $X$ are finite
dimensional and in even degree. Building $X$ from its Postnikov tower
we see inductively that the cohomology of its Postnikov sections are 
entirely in even degrees (and polynomial). Since the homotopy of $X$
is in even  degrees the k-invariants are in odd degrees and hence
zero. This shows that $X$ is a finite product of even
Eilenberg-MacLane spaces. 
\end{proof}

\subsection{g-regularity for $p$-complete spaces }

Of the commutative algebra definitions, the only one with a straightforward
counterpart for $C^*(X)$ is g-regularity. 
We have seen in Subsection \ref{subsec:cochains},  
that for connected $p$-complete spaces $X$ with finite fundamental
group, if  $R=C^*(X;\Q)$ we have $\pi_*(\cE)=H_*(\Omega X)$. Accordingly a
g-regular rational space is one with $H_*(\Omega X)$ finite
dimensional. 

\begin{example} For any finite $p$-group $G$, the classifying space
$X=BG$ is $p$-complete, and hence satisfies our hypotheses without
further completion. Thus  $\Omega X$ is equivalent to the
finite discrete space $G$, so  $\cE\simeq kG$ and $X$ is g-regular. 

On the other hand the coefficient ring $H^*(BG)$ is very rarely 
regular, so this gives many examples of g-regular ring spectra which are not
c-regular.  
\end{example}

\begin{example}
Extending this idea, if $G$ is any compact Lie group with component
group a finite  $p$-group we have 
$$\Omega(BG_p^{\wedge})\simeq (\Omega BG)_p^{\wedge}\simeq
G_p^{\wedge}, $$
so that $X=(BG)_p^{\wedge}$ is again g-regular over $\Fp$. 

In fact  $X=(BG)_p^{\wedge}$ is g-regular if and only if $\pi_0(G)$ is
$p$-nilpotent,  and we will see explicit examples later where
completed classifying spaces are  not g-regular.
\end{example}

In fact this is the starting point of Dwyer and Wilkerson's project
\cite {DWpcompact} to 
capture properties of groups at a single prime homotopically. 
They  define a  connected $p$-complete space $X$ to be {\em the
classifying spaces of a connected $p$-compact group} if $H_*(\Omega X;\Fp)$
is  finite dimensional (i.e., if $C^*(X;\Fp)$ is g-regular). In other
words, a {\em $p$-compact group} is an $\Fp$-finite loop space whose
classifying space is $p$-complete and  g-regular.

A major programme involving many people has given a classification of
connected $p$-compact groups following  the lines of that for compact
Lie groups. This starts with the theorem of Dwyer and Wilkerson that connected $p$-compact groups have a 
maximal torus, and has finally culminated in the  classification of
connected $p$-compact groups in terms of  the associated $p$-adic root data
(Anderson, Grodal, M\o ller and Viruel (\cite{AGMV} for odd primes, and
\cite{AG} for $p=2$). At the prime 2 there is only one simple
2-compact group which is not obtained by completing a compact
connected Lie group. At odd primes there are many exotic examples,
because of the many $p$-adic reflection groups. It is remarkable that
objects defined purely in terms of a finiteness condition can be completely classified. 

\subsection{s-regularity for $p$-complete spaces}

Looking back at the definition of s-regularity for ring spectra, the
basic ingredient is to consider the multiplicative sequence 
$$R\lra K(x)\lra K(x)\tensor_Rk = \Lambda (\tau)$$
where $\tau$ is a generator of degree $n+1$. Since we are working with
$k$-algebras, exterior algebras are formal, so  if $n\geq 0$, we have
$\Lambda (\tau) \simeq C^*(S^{n+1})$. 
Thus if $R=C^*(X)$, the Eilenberg-Moore theorem shows that the most
obvious source of such cofibre sequence is  a spherical fibration
$$X\lla X_1 \lla S^{n+1}. $$
It is natural when talking about spaces to restrict the cofibrations
to be of this type

\begin{defn}
A space $X$ is  $s$-regular if there are $k$-complete  fibrations
$$X=X_0\lla X_1 \lla S^{n_1}, X_1\lla X_2 \lla S^{n_2}, 
\ldots ,X_{r-1}\lla X_r \lla S^{n_r}$$
with $X_r\simeq *$ and $n_i\geq 1$.  
\end{defn}

Examples of this type seem very rare, but do arise from the classical
infinite families of compact Lie groups. 

\begin{example}
$X=BU(n)$ is s-regular, in view of the fibrations
$$BU(n)\lla BU(n-1)\lla U(n)/U(n-1)\cong S^{2n-1}. $$
Similarly for $BSO(n), BSpin (n), BSp(n)$.
\end{example}

\section{Finite generation}
\label{sec:normalizable}

In conventional algebra, finite generation is one of the most
important finiteness conditions. However it is not all obvious how
best to translate this into a homotopy invariant context. In this
section we describe an approach we have found useful, which is based
on the idea that finite generation and smallness coincide over regular
rings.

\subsection{Finiteness conditions}
If $R$ is a conventional Noetherian commutative ring and $M$ is a  module
 over it,  there are a number of natural finiteness conditions. 
If $M$ has a finite  resolution by finitely generated projectives then
it is finitely built from $R$ and hence small in the derived category; the converse is also true, since one may factor the identity 
map through a finite truncation of the projective resolution. These
two equivalent homotopy invariant finiteness conditions on $M$ are
extremely useful.

On the other hand, it is not so clear how to give a homotopy invariant version of the notion of being finitely 
generated. In this section we discuss one useful method. It is based
on the fact that for a {\em regular} conventional ring  $Q$ finitely
generated  modules coincide with small modules.

\subsection{Normalizable ring spectra.}
In commutative algebra,  it is natural to assume rings are Noetherian, and  one of the most useful consequences for $k$-algebras is Noether normalization, stating that a Noetherian $k$-algebra is a
finitely generated module over a polynomial subring. 

\begin{defn}
We will say that a ring spectrum $R$ is {\em g-normalizable}, if there is a g-regular  ring spectrum $S$ with $S_*$ Noetherian,  and a ring map $S\lra R$ making $R$ into small $S$-module. In this case $S\lra R$ is called a 
{\em g-normalization} and $R\tensor_S k$ is its  {\em Noether fibre}.

If $S$ can be chosen so that its coefficient ring $S_*$ is regular, we say $R$ is {\em c-normalizable}. 
\end{defn}

As usual, with coefficient-level conditions, c-normalizations are a
bit rigid, but they do give a template for comparison. 
\begin{lemma}
\label{lem:creg}
(i) If $S_*$ is regular then an $S$-module $N$ is small if and only if $N_*$ is a finitely generated
$S_*$-module. 

(ii)  If $R$ is c-normalizable then $R_*$ is Noetherian.  
\end{lemma}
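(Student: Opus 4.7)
My plan is to handle (i) first, since (ii) follows easily from it. For (i), both implications are inductive; only the backward one uses regularity.

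For the forward direction of (i), I would argue that the full subcategory of $S$-modules $N$ with $N_*$ finitely generated over $S_*$ contains $S$ itself and is closed under shifts, finite coproducts, retracts, and triangles. The triangle-closure step is the one to check: it uses the long exact sequence in homotopy and the fact that submodules and quotients of finitely generated modules over the Noetherian ring $S_*$ remain finitely generated. Since every small $S$-module is finitely built from $S$, it lies in this subcategory.

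For the backward direction, I would induct on the projective dimension of $N_*$ over $S_*$. Given $N_*$ finitely generated, pick homogeneous generators $x_1,\ldots,x_n$ of degrees $d_i$ and assemble them into a map $F_0 := \bigvee_i \Sigma^{d_i} S \to N$ which is surjective on $\pi_*$. Taking $N_1 := \mathrm{fibre}(F_0 \to N)$ gives a short exact sequence $0 \to N_{1,*} \to F_{0,*} \to N_* \to 0$ with $F_{0,*}$ free; hence $N_{1,*}$ is again finitely generated (Noetherianity) and $\mathrm{pd}(N_{1,*}) = \max(\mathrm{pd}(N_*)-1, 0)$. Because $S_*$ is regular, finitely generated modules have finite projective dimension, so iterating terminates at some $N_r$ with $N_{r,*}$ projective. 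In the (graded) local setting this projective is free, and a choice of basis produces a map $\bigvee_j \Sigma^{e_j} S \to N_r$ which is a $\pi_*$-isomorphism, hence an equivalence. Unwinding the tower of cofibre sequences $N_{i+1} \to F_i \to N_i$ shows each $N_i$ is finitely built from $S$, and in particular $N = N_0$ is small.

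Part (ii) is then quick. By c-normalizability, there is a ring map $S \to R$ with $S_*$ regular (hence Noetherian) and $R$ small as an $S$-module. Part (i) shows $R_*$ is finitely generated over $S_*$. Any ideal of $R_*$ is in particular an $S_*$-submodule of the finitely generated $S_*$-module $R_*$, so it is finitely generated over $S_*$, and a fortiori finitely generated as an $R_*$-ideal; thus $R_*$ is Noetherian.

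The main obstacle is the realisation step in the backward direction of (i): lifting the algebraic resolution of $N_*$ to a finite filtration of $N$ by shifts of $S$, and in particular recognising that an $S$-module whose homotopy is a finitely generated free $S_*$-module must be equivalent to a finite wedge of suspensions of $S$. This is where one uses that $\pi_*$-isomorphisms between $S$-modules are equivalences, and where the regular-local assumption (projective = free) enters essentially; without it one would have to work harder to realise the terminating projective topologically.
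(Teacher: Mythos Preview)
Your proof is correct and follows essentially the same strategy as the paper: the forward direction of (i) is the two-out-of-three property for finitely generated homotopy over the Noetherian ring $S_*$, and the backward direction realises a projective resolution of $N_*$ and inducts on projective dimension, using regularity to terminate. Your treatment of (ii) is more explicit than the paper's (which simply says it follows from (i)), but the content is the same.
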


\begin{proof}
Part (ii) follows from Part (i), so we prove Part (i). 

It is easy to see that we have a cofibre sequence $X\lra Y\lra Z$ of
$S$-modules, if two terms have finitely generated homotopy, so does
the third. Thus any small module has finitely generated homotopy. 

For the converse, we start by noting that projectives are realizable. Next, if $P_*$ is
projective, $\Hom_S(P,M)=\Hom_{S_*}(P_*, M_*)$. Now for any $M$
we may choose a projective $S_*$-module mapping onto $M_*$ and realize
it by a map $P\lra M$ with fibre $F$. If $M_*$ was not projective to
start with, $F_*$ will be of lower projective dimension. Since $S_*$
is regular, this process terminates. 
\end{proof}

\begin{example} (Venkov)
If $G$ is a compact Lie group (for example a finite group),
then $C^*(BG)$ is c-normalizable. Indeed, we may 
choose a faithful representation $G \lra U(n)$, giving a fibration
$U(n)/G \lra BG \lra BU(n)$. 

It  is an unpublished consequence of work of
Castellana and Ziemianski that every $p$-compact group has a faithful linear
representation, which is to say that if
$B\Gamma$ is regular there is a map $B\Gamma \lra
BSU(n)$, for some $n$, whose homotopy fibre is $\Fp$-finite.
This means that every $p$-compact group is c-normalizable.
\end{example}

\subsection{Finitely generated modules.}
We are now equipped to give our definition. 

\begin{defn}
Suppose that $R$ is a commutative ring spectrum and $M$ is an $R$-module. 
If we are given a normalization $\nu : S \lra R$,  we say that 
 $M$ is {\em $\nu$-finitely generated} if $M$ is small over $S$. 

If $R$ is g-normalizable, we say that $M$ is {\em finitely generated} if $M$ is $\nu$-finitely generated for every
g-normalization $\nu$. 
\end{defn}

Some elementary consequences follow as in Lemma \ref{lem:creg}. 

\begin{lemma}
(i) If $R$ is g-normalizable and $M$ is finitely generated then $M_*$ is a finitely generated
$R_*$-module.

(ii)  If $\nu: S\lra R$ is a  c-normalization, then $M$ is $\nu$-finitely generated if and only if $M_*$ is finitely generated
over $R_*$.
 \end{lemma}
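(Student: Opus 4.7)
The plan is to obtain both parts from Lemma \ref{lem:creg}(i) together with the observation that smallness of $M$ over a ring spectrum with Noetherian coefficient ring always forces $M_*$ to be finitely generated.

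First I would handle part (i). Choose any g-normalization $\nu:S\lra R$; by the definition of finite generation, $M$ is small over $S$, i.e.\ $S\finbuilds M$. Let $\cC$ denote the class of $S$-modules $N$ with $N_*$ finitely generated over $S_*$. Since $S_*$ is Noetherian, $\cC$ contains $S$ trivially and is closed under retracts, and the long exact sequence on homotopy combined with the two-out-of-three property for finitely generated modules over a Noetherian ring shows it is closed under cofibre sequences. Hence $\cC$ contains the whole thick subcategory generated by $S$, and so $M_*$ is finitely generated over $S_*$. Because the $S_*$-action on $M_*$ factors through $\nu_*:S_*\lra R_*$, a finite $S_*$-generating set is automatically an $R_*$-generating set, and $M_*$ is finitely generated over $R_*$.

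For part (ii), $\nu:S\lra R$ is a c-normalization, so now $S_*$ is regular as well as Noetherian. The forward implication follows from part (i), since $\nu$-finite generation is smallness over $S$, hence finite generation of $M_*$ over $R_*$ by exactly the argument above. For the converse, suppose $M_*$ is finitely generated over $R_*$. Applying the argument of part (i) to $R$ itself (which is small over $S$ by the definition of normalization) shows $R_*$ is finitely generated over $S_*$; composing a finite $R_*$-generating set for $M_*$ with a finite $S_*$-generating set for $R_*$ yields a finite $S_*$-generating set for $M_*$. Since $S_*$ is regular, the other direction of Lemma \ref{lem:creg}(i) now converts this back into smallness of $M$ over $S$, i.e.\ $\nu$-finite generation.

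The only place one could stumble is the asymmetry between (i) and (ii): in (i) there is just a one-way implication, because without regularity of $S_*$ the converse direction of Lemma \ref{lem:creg}(i) fails in general — a module with finitely generated homotopy over a merely Noetherian ring spectrum need not be small. Regularity of $S_*$ in (ii) is precisely what restores the biconditional, and I would flag this as the structural reason why (ii) can strengthen (i) to an iff. Apart from that, the proof is essentially a two-line bookkeeping exercise with generating sets.
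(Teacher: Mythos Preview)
Your proof is correct and follows essentially the same approach as the paper: reduce to smallness over $S$, use that small modules have finitely generated homotopy over the Noetherian ring $S_*$, and for the converse in (ii) pass from $R_*$-finite generation back to $S_*$-finite generation (via $R_*$ being finitely generated over $S_*$) and then invoke regularity of $S_*$ through Lemma~\ref{lem:creg}(i). The paper's version is simply more terse, omitting the thick-subcategory closure argument and the explicit mention of the forward direction in (ii).
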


\begin{remark}
(a) If $R$ is c-normalizable, it is natural to refer to the class of
modules $M$ for which $M_*$ if finitely generated over $R_*$ as 
{\em c-finitely generated.}

(b) This raises the question of whether there are c-finitely generated
modules which are not finitely generated. In other words, if $R$ is a c-normalizable
g-regular ring and $M$ is an $R$-module with $M_*$ finitely generated
over $R_*$ does it follow that $M$ is small? It would suffice to show
that $R$ has a c-normalization $S$ which is an $S$-module retract of
$R$, as happens when $S_*\lra R_*$ is a monomorphism and $R_*$ is
Gorenstein. 
\end{remark}
\begin{proof}
(i) If $M$ is small over $S$ for some normalization, then $M_*$ is finitely generated over $S_*$ and hence over $R_*$. 

(ii) If $M_*$ is finitely generated over $R_*$ it is finitely generated over $S_*$. Since $S_*$ is regular, this means $M$ is small over $S$ and hence finitely generated. 
\end{proof}

\subsection{Singularity categories}
In conventional commutative algebra the Buchweitz singularity category 
$\sfD_{sing}(R)$ is defined to be the quotient of the bounded derived
category by the subcategory of perfect complexes. 

If we are given a normalization $\nu$ of a commutative ring spectrum $R$, it is evident that the category 
of $\nu$-finitely generated modules is a triangulated subcategory of
the category of $R$-modules, and similarly for the finitely generated
modules. Evidently for any particular g-normalization $\nu$  we have
full and faithful embeddings 
$$\sfD_{small}(R)\subseteq \sfD_{fg} (R) \subseteq \sfD_{\nu-fg}(R)\subseteq \sfD_{c-fg}(R)$$
of triangulated categories each closed under retracts. We could
consider any one of the quotients, but the most natural is the
initial quotient
$$\sfD_{sing}(R)=\sfD_{fg}( R)/\sfD_{small}(R) .$$
This has the merit that if $R$ is itself g-regular, the quotient is
trivial, and  in many cases one can show that $\sfD_{fg}(R)=\sfD_{c-fg}(R)$.

The singularity quotients are understood in many algebraic cases, but
for the present we will restrict ourselves to an example where we can
see from first principles that the quotient is non-trivial. See
\cite{GS} for further discussion. 

\begin{example}
If $R=C^*(BA_4)$ with $k=\Ftwo$ then $M=C^*(BV_4)$ is c-finitely
generated since $H^*(BV_4)$ is finitely generated over $H^*(BA_4)$ by
Venkov's Theorem. However if we let $F=\fib (BV_4 \lra BA_4)$ there is
a fibre sequence $\Omega S^3 \lra F \lra SO(3)/V_4$ which may be used
to check $H^*(F)$
is non-zero in infinitely many degrees. Thus $C^*(BV_4)$ is not small over $C^*(BA_4)$. 
\end{example}






\part{Hypersurface rings}
This is the second of a series of parts that take particular classes of commutative local 
rings, and identifies ways of giving homotopy invariant counterparts
of the definitions, which apply to ring spectra. 
The justification consists of the dual facts that the new definition reduces to the old in 
the classical setting and that the new definition covers and
illuminates examples in new contexts. 

Although we restrict attention to hypersurface rings here, we begin in
Section \ref{sec:ci} with a brief sketch of relevant facts about  complete intersections
in general to set  the context.  In Section \ref{sec:hypersurfaces1}
we describe the algebraic theory of  hypersurface rings in a bit more
detail. In Section \ref{sec:bimod} we explain the relevance of
bimodules and Hochschild cohomology. In view of the discussion in Part
3, it is then quite routine in Section \ref{sec:cispaces} to provide
the definitions of ci ring spectra, and we illustrate some of the
results with examples. In Sections \ref{sec:shyperiszhyper} and
\ref{sec:zhyperisghyper} we explain how the three homotopy invariant
versions of the ci condition are related to each other. 

\section{Complete intersections}
\label{sec:ci}
To understand the significance of hypersurfaces, we should first say a word about complete intersections.
There is a general framework for discussing these in the homotopy invariant context \cite{qzci, pzci}, but
there are few examples beyond the hypersurface case. The general theory is in some sense obtained by 
iterating the case of hypersurfaces, but there are a number of different ways of iterating it, and some work 
better than others. 

\subsection{Classical complete intersections}
These lectures will focus on hypersurfaces, but it is helpful to set the more general context with a
brief discussion of complete intersections in general. Just as for regular rings there are three styles
of definition: (s) a structural one (g) one involving growth and (m) one involving the homological algebra 
of modules. These all have counterparts in the homotopy invariant setting which we will introduce in 
due course. 

We start with the structural definition. 
The best behaved subvarieties of affine space are those which are specified by the right number of
equations: if they are of codimension $c$ then only $c$ equations are required.
On the same basis, a commutative local ring $R$ is a {\em structurally
  complete intersection (sci)}
if its completion is the quotient of a regular local ring $S$ by a regular sequence,
$f_1,f_2, \ldots, f_c$ (this would normally just be called `ci', but
the longer name eases comparison with the case of ring spectra). We will suppose that $R$ is complete, so that
$$R=S/(f_1,\ldots, f_c).$$
The smallest possible value of $c$ (as $S$ and the regular sequence vary)
is called the {\em codimension} of $R$. A hypersurface is the special case when 
$c=1$ so that a single equation is used.

When it comes to growth, if $R$ is ci of codimension $c$, one may construct
a resolution of any finitely generated module growing like a polynomial
of degree $c-1$. In particular the ring $\Ext_R^*(k,k)$ has polynomial growth
(we say that $R$ is {\em gci}). Perhaps the most striking result about ci rings is the theorem of Gulliksen
\cite{Gulliksen} which states that this characterises ci rings so that the
sci and gci conditions are equivalent for local rings.

With a little care, one may construct the resolutions in an eventually multi-periodic fashion: the projective
resolution eventually has the pattern of  the tensor product of $c$ periodic exact sequences. In fact
the construction is essentially independent of the module and the calculation can be phrased in terms
of the Hochschild cohomology of $R$.  This opens the way to
the theory of support varieties for modules over a ci ring \cite{AB}.

\subsection{Homotopy invariant versions, and Levi's groups}

One may give homotopy invariant versions of all three characterizations of
the ci condition:
\begin{description}
\item[(sci)] the `regular ring modulo
regular sequence' condition,
\item[(mci)] the `modules have eventually
multiperiodic resolutions' and
\item[(gci)] polynomial growth of the Ext algebra
\end{description}

 We note that the Avramov-Quillen
characterization of ci rings in terms of Andr\'e-Quillen homology
does not work for cochains on a space in the mod $p$ context since Mandell has shown \cite{Mandell}
that the topological  Andr\'e-Quillen cohomology
vanishes very generally in this case.

It  involves some work to describe the sci and mci definitions, but if we take $R=C^*(X)$ for a $p$-complete 
space Subsection \ref{subsec:cochains} shows $H_*(\Hom_{C^*(X)}(k,k))=H_*(\Omega X)$, so it is easy to understand the gci condition. 
Taking $X=(BG)_p^{\wedge}$ we may consider what this means. Of course
if $G$ is a $p$-group,  $X=BG$ so $\Omega X\simeq G$ and  $H_*(\Omega X)$ is 
simply the group ring $kG$ in degree 0 and therefore finite dimensional. More generally, it is known
to be of polynomial growth in certain cases (for instance $A_4$ or $M_{11}$
in characteristic 2) and
R.~Levi \cite{Levi1,Levi2,Levi3,Levi4}
has proved there is a dichotomy between small growth and large growth,
and given examples where the growth is exponential. Evidently groups
whose $p$-completed classifying spaces have
loop space homology that has exponential growth cannot be spherically resolvable,
so Levi's groups disproved a conjecture of F.Cohen.

\begin{example}
(a) It is amusing to consider the $p$-completed classifying space
$BG_p^{\wedge}$ for $G=(C_p\times C_p)\sdr C_3$ where $C_3$ acts via
$(1,0)\mapsto (0,1)\mapsto (-1,-1)$. 
When $p=3$, $G$ is a $p$-group so the space is g-regular. When $p=2$ the group is the alternating group $A_4$, 
which we will see below is a hypersurface. If $p\geq 5$ then Levi shows $H_*(\Omega (BG_p^{\wedge}))$ has exponential
growth by showing that $\Omega (S^5\cup_pe^6)$ is a retract. 

(b) If the reader prefers an example with a purely algebraic treatment, Benson shows in
\cite[Example 2.2]{squeezed} that the group $(C_3)^2\sdr C_2$ has a
3-completed classifying space whose loop space has exponential growth
using the methods sketched in Subsection \ref{subsec:squeezed} below.  
\end{example}

\section{Hypersurfaces in algebra}
\label{sec:hypersurfaces1}

In this section we recall some standard constructions for hypersurface algebas. We
suppose $R=S/(f)$ is a hypersurface ring, where $S$ is a regular ring and $f$ is a nonzero element of degree $d$.
Thus we have a short exact sequence
$$0 \lra \Sigma^d S \stackrel{f}\lra S \lra R \lra 0$$
of $S$-modules for a regular local ring $S$.
 There are two basic constructions that we need to generalize. 

\subsection{The degree 2 operator.}
\label{subsec:operator}
We describe a construction of a cohomological operator due to Gulliksen 
\cite{Gulliksen2,AvramovCRM}. We will do this for a single module, but 
it is apparent that the construction is essentially independent of the module, and
in fact it lifts to Hochschild cohomology. 

Given an $R$-module $M$ we may apply $(\cdot) \tensor_SM$ to the defining
sequence to obtain the short exact sequence
$$0 \lra \Tor_1^S(R,M) \lra \Sigma^d M \stackrel{f}\lra M \lra R\tensor_S M \lra 0.$$
Since $f=0$ in any $R$-module, we conclude
$$\Tor_1^S(R,M)\cong \Sigma^d M, R\tensor_S M \cong M, $$
and 
$$\Tor_i^S(R,M)=0 \mbox{ for } i \geq 2.$$
Next, choose a free $S$-module $F$ with an epimorphism to $M$, giving a 
short exact sequence 
$$0 \lra K \lra F \lra M \lra 0$$
of $S$-modules. Applying $(\cdot)\tensor_SR$, we obtain 
$$0 \lra \Tor_1^S(R,M) \lra \Kbar \lra \Fbar \lra M \lra 0.$$
Since $\Tor_1^S(R,M) \cong \Sigma^d M$, this gives an element 
$$\chi_f \in \Ext^2_R(M, \Sigma^d M), $$
or a map 
$$\chi_f: M \lra \Sigma^{d+2}M$$
in the derived category of $R$-modules. 
We sketch below how this construction lifts to give an element 
$$\chi_f \in HH^{d+2}(R|S), $$
and hence in particular that it gives a natural transformation of the
identity functor  (i.e., an element of the centre
$Z\sfD(R)$ of $\sfD(R)$).

\subsection{The eventually periodic resolution.}
\label{subsec:eventperiod}
Continuing with the above discussion, we may show that all modules $M$ have
free resolutions over $R$ which are eventually periodic of period 2. 

Indeed, $M$ has a finite free $S$-resolution  
$$0 \lra F_n \lra F_{n-1} \lra \cdots \lra F_1 \lra F_0 \lra M \lra 0. $$
Adding an extra zero term if necessary, we suppose for convenience that $n$ is even.
Now apply $(\cdot) \tensor_S R$ to obtain a complex
$$0 \lra \Fbar_n \lra \Fbar_{n-1} \lra \cdots \lra \Fbar_1 \lra \Fbar_0 \lra M \lra 0. $$
Since $\Tor_i^S(R, M)=0$ for $i \geq 2$, this is exact except in homological 
degree 1, where it is $\Tor_1^S(R,M)\cong \Sigma^d M$. Splicing in a second copy of the resolution, 
we obtain a complex
\[ \addtolength{\arraycolsep}{-0.4mm}
\begin{array}{ccccccccccccccccc}
0           &\lra &
\Fbar_n     &\lra & 
\Fbar_{n-1} &\lra & 
     \cdots &\lra &
\Fbar_2     &\lra &
\Fbar_1     &\lra &
\Fbar_0     &\lra &
M           &\lra &
0 \\
\oplus      &&
\oplus      &&
\oplus      &&
      &&
\oplus      &\nearrow&
      &&
      &&
      &&\\
\Sigma^d \Fbar_{n-1} &\lra &
\Sigma^d \Fbar_{n-2} &\lra &
\Sigma^d\Fbar_{n-3} &\lra &
     \cdots &\lra &
\Sigma^d \Fbar_0      & &    
            &       &
            &      &
           &      &
\end{array} \]
which is exact except in the second row, in homological degree 4, 
where the homology
is again $M$. We may repeatedly splice in additional rows to 
obtain a free resolution
$$\ldots \lra G_3 \lra G_2 \lra G_1 \lra G_0 \lra M \lra 0$$
over $R$. Remembering the convention that $n$ is even, provided  the degree is 
at least $n$,  the modules in the resolution are (up to suspension by a multiple of $d$)
\[ G_{2i}=\Fbar_{n} \oplus \Sigma^d\Fbar_{n-2} \oplus \cdots \oplus 
\Sigma^{d(n-2)/2}\Fbar_2 \oplus \Sigma^{dn/2}\Fbar_0 \]
in even degrees and 
\[ G_{2i+1}=\Fbar_{n-1} \oplus \Sigma^d \Fbar_{n-3} \oplus \cdots 
\oplus \Fbar_3 \oplus \Sigma^{d(n-2)/2}\Fbar_1 \]
in odd degrees. 

\subsection{Smallness.}

We may reformulate the eventual periodicity of the previous subsection in 
homotopy invariant terms. 

\begin{lemma}
If $R$ is a hypersurface $R=S/(f)$ and $M$ is a finitely generated $R$-module
 then the mapping cone of 
$\chi_f: M \lra \Sigma^{d+2}M$ is small. 
\end{lemma}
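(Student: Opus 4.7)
The plan is to identify $\mathrm{fib}(\chi_f)$ in $\sfD(R)$ with the derived tensor product $R \otimes_S^L M$, and then observe that the latter is small because $S$ is regular. Since $S$ is regular and $M$ is finitely generated over $R$ (hence over $S$ via the ring map $S \to R$), Auslander--Buchsbaum--Serre provides $M$ with a finite resolution $F_\bullet \to M$ by finitely generated free $S$-modules. Then $\bar F_\bullet := F_\bullet \otimes_S R$ is a bounded complex of finitely generated free $R$-modules, hence finitely built from $R$; and it computes $R \otimes_S^L M$, which is therefore small over $R$.

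Next I would identify $R \otimes_S^L M$ with $\mathrm{fib}(\chi_f)$. By the $\Tor$ computation in Subsection \ref{subsec:operator}, the homology of $\bar F_\bullet$ is concentrated in homological degrees $0$ and $1$, with values $M$ and $\Sigma^d M$ respectively. Hence in $\sfD(R)$ we have $\bar F_\bullet \simeq \tau_{\leq 1}\bar F_\bullet$. Using right-exactness of $- \otimes_S R$ on the surjection $F_1 \twoheadrightarrow K$ (with $K = \ker(F_0 \to M)$), together with the vanishing of $\Tor_1^S(K,R) = \Tor_2^S(M,R)$, this truncation is precisely the 2-term complex $(\bar K \to \bar F_0)$ appearing in Subsection \ref{subsec:operator}. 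Its Postnikov triangle
$$\Sigma^{d+1} M \lra \bar F_\bullet \lra M \stackrel{\xi}\lra \Sigma^{d+2} M$$
has $k$-invariant $\xi$ equal to the Yoneda class of the 4-term exact sequence $0 \to \Sigma^d M \to \bar K \to \bar F_0 \to M \to 0$, which by definition is $\chi_f$. Rotating the triangle, $\mathrm{cof}(\chi_f) \simeq \Sigma(R \otimes_S^L M)$, which is small.

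The main obstacle will be the identification of $\xi$ with $\chi_f$: this amounts to matching the truncation $\tau_{\leq 1}\bar F_\bullet$ with the 4-term extension defining $\chi_f$, requiring some diagram-chasing through the $\Tor$ long exact sequences but ultimately routine. An alternative, more hands-on route would be to use the eventually 2-periodic $R$-free resolution $G_\bullet \to M$ of Subsection \ref{subsec:eventperiod}: realising $\chi_f$ as a chain map $G_\bullet \to \Sigma^{d+2} G_\bullet$ which is the identity beyond the non-periodic initial segment, one sees directly that $\mathrm{cof}(\chi_f)$ is the mapping cone of that chain map, which is concentrated in the initial segment and hence a bounded complex of finitely generated free $R$-modules.
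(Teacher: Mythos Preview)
Your proposal is correct and essentially matches the paper's proof. Your alternative route---realizing $\chi_f$ via the eventually periodic resolution $G_\bullet$ and reading off $\mathrm{fib}(\chi_f)\simeq \bar F_\bullet$ from the short exact sequence $0\to \bar F_\bullet \to G_\bullet \to \Sigma^{d+2}G_\bullet\to 0$---is precisely what the paper does; your primary route via the Postnikov $k$-invariant of the two-term complex $(\bar K\to \bar F_0)$ is a minor repackaging that reaches the same identification $\mathrm{fib}(\chi_f)\simeq R\tensor_S^L M\simeq \bar F_\bullet$ without first assembling the full periodic resolution.
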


\begin{proof}
From the Yoneda interpretation, we notice that 
$$\chi_f : M \lra \Sigma^{d+2}M$$
is realized by the quotient map factoring out the first row subcomplex
$$0 \lra \Fbar_n \lra \Fbar_{n-1} \lra \cdots \lra \Fbar_1 \lra \Fbar_0\lra 0. $$
Thus the short exact sequence
$$0 \lra \Fbar_{\bullet} \lra G_{\bullet} \lra \Sigma^{d+2} G_{\bullet} \lra 0$$
of $R$-free chain complexes realizes the triangle
\begin{equation*}
\Sigma^{1}M/\chi \lra M \lra \Sigma^{d+2}M.
\qedhere
\end{equation*}
\end{proof}

\subsection{Matrix factorizations}
\newcommand{\Fpb}{\overline{F}'}
\newcommand{\Fppb}{\overline{F}''}

Because of the importance of matrix factorizations and their prominence in the IRTATCA programme, it is worth a brief subsection 
to make the connection. We have considered resolutions of $R$-modules $M$, and we note that
if $M$ is of projective dimension $1$ over $S$ then the $S$-resolution 
$$0\lla M \lla F_0\stackrel{A}\lla F_1\lla 0$$
gives a periodic resolution 
$$0\lla M \lla \Fbar_0 \lla \Fbar_1 \lla \Fbar_0 \lla \Fbar_1 \lla \Fbar_0 \lla \Fbar_1 \lla\cdots $$
over $R$. 

Since $f=0$ on $M$, multiplication by $f$ on the $S$-resolution is null-homotopic and so 
there is a diagram
$$\diagram
0& M\lto \dto^0 &F_0\lto \dto^f 
& F_1\lto_A \dto^f \dlto^B& 0\lto \\
0& M\lto         &F_0\lto         & F_1\lto^A        & 0\lto \\
\enddiagram $$
In short we have two maps $A: F_1 \lra F_0$ and $B: F_0\lra F_1$ with $AB=f\cdot id, BA=f\cdot id$. Since $F_0$ and $F_1$ are free modules, we can choose bases and represent $A$ and $B$ by matrices, so this structure 
is known as a {\em matrix factorization}. 

If we suppose $S$ is of dimension $d$,  $R$ is of dimension $d-1$. We note that 
the homological dimension of $M$ over $S$ can be expressed as an
invariant of $M$ as an $R$-module. In our case $M$ is of projective
dimension 1 over $S$ and  by the Auslander-Buchsbaum formula, it is of
depth $d-1$ as an $R$-module, which is to say it is  a maximal Cohen-Macaulay 
module. For a  module $M$ of depth $d-1-i$  (i.e., of projective dimension  $i+1$ over $S$) the above  discussion applies to its $p$th  syzygy.

\section{Bimodules and natural endomorphisms of $R$-modules}
\label{sec:bimod}

To describe the m-version of the definition of hypersurface rings, we need to briefly discuss bimodules. 

\subsection{The centre of the derived category of $R$-modules}
If $R$ is a commutative Noetherian ring and $M$ is a finitely
generated  $R$-module with an eventually $n$-periodic resolution, there
is a map $M\lra \Sigma^nM$ in the derived category whose mapping cone
is  a small $R$-module. If $R$ is an ungraded hypersurface ring, all finitely generated modules
have such resolutions with $n=2$. The lesson learnt from commutative algebra is
that to use this to characterize hypersurfaces we need to look at all
such modules $M$ together, and ask for a natural transformation
$1\lra \Sigma^n 1$ of the identity functor. By definition the centre
$Z\sfD(R)$ is the graded ring of all such natural transformations.
There are various ways of constructing elements of the centre, and
various natural ways to restrict the elements we consider.
 Some of these work better than others, and it is the purpose of this
section is to introduce these ideas.

\subsection{Bimodules}

We consider a map $S\lra R$, where $S$ is regular and $R$ is small
over $S$. We may then
consider $R^e=R\tensor_SR$, and $R^e$-modules are $(R|S)$-bimodules.
The Hochschild cohomology ring is defined by
$$HH^*(R|S)=\Ext_{R^e}^*(R,R).$$

If $f:X\lra Y $ is a map of  $(R|S)$-bimodules, for any $R$-module $M$
we obtain a map  $f\tensor 1 : X\tensor_R M\lra Y\tensor_R M$ of
(left) $R$-modules.

The simplest way for us to use this is that if we have isomorphisms
$X\cong R$ and  $Y\cong \Sigma^n R$ as $R$-bimodules, the map
$f\tensor 1: M\lra \Sigma^n M$  is natural in $M$ and therefore gives
an element of codegree $n$ in $ZD(R)$: we obtain a map of rings
$$HH^n(R) =\Hom_{R^e}(R,\Sigma^n R)\lra Z\sfD(R)^n.$$

Continuing, if $X\finbuilds_{R^e} Y$ then $X\tensor_R M\finbuilds_R
Y\tensor_R M$. In particular, if $X=R$ builds a small $R^e$-module
$Y$ then
$$ M=R\tensor_R M\finbuilds_R Y\tensor_R M \finbuiltby
R^e\tensor_RM=R\tensor_S M.$$
Thus if $M$ is finitely generated (i.e., small over $S$), this shows
$M$ finitely builds a small $R$-module.

It is sometimes useful to restrict the maps permitted in showing that $X
\finbuilds_{R^e}Y$.  If we restrict to using maps of
positive codegree coming from Hochschild cohomology, we write
$X\finbuilds_{hh} Y$.  If we permit any maps of positive codegree from the centre
$Z\sfD(R^e)$ we write $X\finbuilds_z Y$. Finally,   we could relax further and
require only that all the maps involved in building are endomorphisms
 of non-zero degree for some object and write $X\finbuilds_e Y$. It is
 this last condition that turns out to be most convenient for complete
 intersections of larger codimension.


\section{Hypersurface ring spectra}
\label{sec:cispaces}
We are now ready to describe the s-, g- and m- versions of the hypersurface condition for ring spectra. 
As usual  we begin with the template in commutative algebra, go on to describe it in general and then make it 
concrete for spaces. 

\subsection{The definition in commutative algebra}

In commutative algebra there are three styles for a definition of
a hypersurace ring: ideal theoretic, in terms of the growth of
the Ext algebra and a derived version. See \cite{cazci} for a more
complete discussion.

\begin{defn}
 (i) A local Noetherian ring $R$ is an {\em s-hypersurface}
ring if $R=S/(f)$
some regular ring $S$ and some  $f\neq 0$.

(ii) A local Noetherian ring $R$ is a  {\em g-hypersurface}
if the dimensions $\dim_k(\Ext_R^n(k,k))$ are bounded independently of $n$.

(iii) A local Noetherian ring $R$ is a  {\em z-hypersurace} if there is
an elements $z \in Z\sfD(R)$ of non-zero degree
so that
$M/z$ is small for all finitely generated modules
$M$.  Similarly $R$ is an {\em hh-hypersurface} if the element $z$ can be chosen to
come from Hochschild cohomology.
\end{defn}


\begin{thm} (\cite{AvramovCRM}, transcribed into the present language in \cite{cazci})
For a local Noetherian ring the s-, z- and g-hypersurface conditions are all  equivalent.
\end{thm}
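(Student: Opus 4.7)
The plan is to establish the chain s $\Rightarrow$ hh $\Rightarrow$ z $\Rightarrow$ g $\Rightarrow$ s, so that all four conditions (including hh) coincide. The first two implications are essentially repackagings of constructions already in Section \ref{sec:hypersurfaces1}, the third is a direct calculation with a long exact sequence, and the fourth is the classical Tate--Gulliksen theorem which I would simply cite.

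\textbf{s $\Rightarrow$ hh $\Rightarrow$ z.} Suppose $R = S/(f)$ with $S$ regular and $f$ of degree $d$. Subsection \ref{subsec:operator} produced the degree $d+2$ Gulliksen operator $\chi_f \in HH^{d+2}(R|S)$, and hence (under the map $HH^*(R|S) \to Z\sfD(R)^*$ reviewed in Section \ref{sec:bimod}) an element $z := \chi_f \in Z\sfD(R)$ of codegree $d+2$. For any finitely generated $R$-module $M$, the eventually 2-periodic resolution constructed in Subsection \ref{subsec:eventperiod} exhibits a short exact sequence of $R$-free complexes $0 \to \bar F_\bullet \to G_\bullet \to \Sigma^{d+2} G_\bullet \to 0$; the quotient realises the map $\chi_f : M \to \Sigma^{d+2} M$, and its mapping cone is $\bar F_\bullet$, a bounded complex of free modules, hence small. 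So hh holds, and since $HH^*(R|S)$ maps into $Z\sfD(R)$, z also holds.

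\textbf{z $\Rightarrow$ g.} Suppose $z \in Z\sfD(R)$ has nonzero codegree $n$ and that $M/z$ is small for every finitely generated $M$. Apply this to $M=k$. The triangle
$$ k \stackrel{z}\lra \Sigma^n k \lra C \lra \Sigma k,$$
with $C \simeq \Sigma(k/z)$ small, yields on applying $\HomR(-,k)$ the long exact sequence
$$\cdots \lra \Ext^{m-n}_R(k,k) \lra \Ext^m_R(k,k) \lra \Ext^m_R(C,k) \lra \Ext^{m-n+1}_R(k,k) \lra \cdots$$
Since $C$ is small (i.e.\ finitely built from $R$), $\Ext^*_R(C,k)$ is a finite-dimensional $k$-vector space in total. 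Also $\Ext^m_R(k,k)$ is finite-dimensional in each degree because $R$ is Noetherian. For $m$ large enough that $\Ext^m_R(C,k) = 0 = \Ext^{m+1}_R(C,k)$ the multiplication-by-$z$ map becomes an isomorphism, so the sequence $\dim_k \Ext^m_R(k,k)$ is eventually periodic of period $n$, hence bounded. This is g.

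\textbf{g $\Rightarrow$ s.} This is the main obstacle and, really, the substantive classical content: it is Tate's theorem, with Gulliksen's refinement giving the converse direction of Avramov's characterisation of complete intersections. The statement that bounded $\Ext$-growth forces $R$ to be a hypersurface is proved via the structure theory of minimal free resolutions over local rings and the homotopy Lie algebra; I would cite \cite{AvramovCRM} for this step. With this in hand, the cycle of implications closes and all four conditions are equivalent. I would also remark that the argument for s $\Rightarrow$ hh constructs $z$ explicitly as $\chi_f$, which is useful for later applications, whereas the round trip through g $\Rightarrow$ s is non-constructive.
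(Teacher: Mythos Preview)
Your proposal is correct and matches the paper's treatment. The paper does not give a self-contained proof of this theorem---it is cited from \cite{AvramovCRM} and \cite{cazci}---but the surrounding sections supply exactly the ingredients you use: Section~\ref{sec:hypersurfaces1} (Subsections~\ref{subsec:operator}, \ref{subsec:eventperiod}, and the Smallness lemma) gives s $\Rightarrow$ hh $\Rightarrow$ z via the Gulliksen operator $\chi_f$ just as you do, and Section~\ref{sec:zhyperisghyper} proves z $\Rightarrow$ g by the same triangle-and-long-exact-sequence argument (applied there to ring spectra, but the proof is identical for classical rings). The hard direction g $\Rightarrow$ s is the classical Gulliksen result, which both you and the paper defer to the literature.
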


\subsection{Definitions for ring spectra}

The appropriate definition of an s-hypersurface can be seen from the discussion of 
s-regular spectra: the point is that regular elements correspond to
maps with exterior cofibres. 

\begin{defn}
(i) A ring spectrum $R$ is a {\em c-hypersurface} if $R_*$ is a hypersurface ring. 

(ii) The ring spectrum $R$ is an {\em s-hypersurface} if there is a normalization 
$S\lra R$ with $\pi_*(R\tensor_Sk)=\Lambda_k(\tau)$. 

(iii) The ring spectrum $R$ is a  {\em g-hypersurface}
if $\dim_k(\pi_n(\Hom_R(k,k)))$ is bounded independently of $n$.

(iv) The c-normalizable ring spectrum $R$  is a  {\em z-hypersurace} if there is
an element $z \in Z\sfD(R)$ of non-zero degree
so that $M/z$ is small for all finitely generated modules
$M$.  Similarly $R$ is an {\em hh-hypersurface} if the element $z$ can be chosen to
come from Hochschild cohomology.
\end{defn}

One immediate source of examples comes from c-hypersurfaces.
\begin{example}
\label{eg:homciisgci}
If $R$ is a c-hypersurface then the spectral sequence
$$\Ext_{R_*}^{*,*}(k,k)\Rightarrow \pi_*(\Hom_R(k,k))$$
shows that it is a g-hypersurface.
\end{example}

\subsection{Definitions for spaces}
In view of the fact that regular elements correspond to 
spherical fibrations, adapting the above definitions for spaces is straightforward.

\begin{defn}
(i)  A space $X$ is a {\em c-hypersurface} if $H^*(X)$ is a
hypersurface ring. 

(ii) A space $X$ is an  {\em s-hypersurface} (or spherical hypersurface) if it is the total space of a spherical 
fibration over  a connected
g-regular space $B\Gamma$: 
there is a g-regular space $B\Gamma$ and a
fibration
$$S^{n}\lra X \lra B\Gamma$$
for some $n$.

(iii) A space  $X$ is a {\em g-hypersurface} space
if $H^*(X)$ is Noetherian and $\dim_kH_n(\Omega X)$ is bounded
independently of $n$.

(iv) A space $X$ is a {\em z-hypersurface} space if $X$ is g-normalizable and
there is an  element $z \in Z \sfD(C^*(X))$ of non-zero degree
so that $C^*(Y)/z$ is small for all finitely generated 
$C^*(Y) $. It is an {\em hh-hypersurface} if $z$ comes from Hochschild
cohomology.
(The direct transcription for ring spectra would require that this holds for all
finitely generated modules and not just those of the particular form $C^*(Y)$).
\end{defn}

\begin{remark}
Other variants have arisen, such as  $\omega$sci
 where we are permitted to  use loop spaces on spheres rather than spheres.
These conditions arose in Levi's work. This is evidently a weakening 
of sci which still implies gci.
\end{remark}

For  g-normalizable spaces $X$, we have \cite{qzci, pzci}  the implications
$$s-hypersurface \stackrel{} \Rightarrow z-hypersurface
\Rightarrow g-hypersurface  .$$
Over $\Fp$ the final implication may be reversed; the proof is given in \cite{pzci} and relies on \cite{FHT2}.

\subsection{An example}

To start with we may consider the space $BA_4$ at the prime 2. We will
observe directly that it is a hypersurface according to any one of the
definitions. 

To start with, we note that
$$H^*(BA_4)=H^*(BV_4)^{A_4/V_4}=k[x_2,y_3,z_3]/(r_6)$$
where $r_6=x_2^3+y_3^2+ y_3z_3+z_3^2$.
This shows that $BA_4$ is actually a c-hypersurface and hence also a g-hypersurface. 
Indeed,  the Eilenberg-Moore spectral sequence shows that the loop space
homology will eventually have period dividing 4. 

In fact we see that $BA_4$ is an s-hypersurface space at 2.  
The direct symmetries of a tetrahedron give
a homomorphism $A_4 \lra SO(3)$ and hence a map $BA_4 \lra BSO(3)$. The fibre is $SO(3)/A_4$, and at the
prime 2 this is $S^3$, so there is a 2-adic fibration 
$$S^3\lra BA_4 \lra BSO(3).$$

In Subsection \ref{subsec:A4squeezed}, we will also describe a
representation theoretic approach to showing that $BA_4$ is a g-hypersurface, which 
will show that its ultimate  period is exactly 4.

\subsection{Squeezed homology.}
\label{subsec:squeezed}

Since we are working with groups, it is
illuminating to recall Benson's purely representation
theoretic calculation of the loop space homology $H_*(\Omega (BG_p^{\wedge}))$
\cite{squeezed}. In fact he defines the {\em squeezed homology}
 groups $H\Omega_*(G;k) $ algebraically  and proves
$$H_*(\Omega(BG_p^{\wedge}))\cong H\Omega_*(G;k), $$

In more detail, $ H\Omega_*(G;k)$ is the homology of
$$\cdots \lra P_3 \lra P_2 \lra P_1 \lra P_0, $$
 a so-called {\em squeezed resolution}
of $k$. The sequence  of projective $kG$-modules $P_i$
is defined recursively as follows. To
start with $P_0=P(k)$ is the projective cover of $k$.
Now if $P_i$ has been constructed, take $N_i = \ker (P_i \lra P_{i-1})$
(where we take $P_{-1}=k$), and $M_i$ to be the smallest submodule of
$N_i$ so that $N_i/M_i$ is an iterated extension of copies of $k$.
Now take $P_{i+1}$ to be the projective cover of $M_i$.

\subsection{Trivial cases}
Note that if $G$ is a $p$-group, we have $\Omega(BG_p^{\wedge})\simeq G$
so that $H_*(\Omega BG) \cong kG$ and since
$k$ is the only simple module, $M_0=0$ and we again find $H\Omega_*(G)=kG$.

We would expect the next best behaviour to be when $H^*(BG)$ is a hypersurface.
Indeed, if $H^*(BG)$ is a polynomial ring modulo a relation of codegree $d$,
the Eilenberg-Moore spectral sequence
$$\Ext_{H^*(BG)}^{*,*}(k,k) \Rightarrow H_*(\Omega(BG_p^{\wedge}))$$
shows that there is an ultimate periodicity of period $d-2$. The actual
period therefore divides $d-2$. We now return to the example of $A_4$.

\subsection{$A_4$ revisited}
\label{subsec:A4squeezed}
We described the homotopy theoretic proof that $BA_4$ is
an s-hypersurface space above. 

Here we sketch a purely  algebraic proof from \cite{squeezed}. To start with, we would like to see algebraically that
$H_*(\Omega (BA_4)_2^{\wedge})$ is eventually periodic. 

This case is small enough to be able
to compute products in $H_*(\Omega BG_p^{\wedge},k)$ using squeezed resolutions,
and we get
\[ H_*(\Omega BG_p^{\wedge},k)=\Lambda(\alpha)\otimes
k\langle\beta,\gamma\rangle/(\beta^2,\gamma^2) \]
with $|\alpha|=1$ and $|\beta|=|\gamma|=2$.
Beware that $\beta$ and $\gamma$ do not commute, so that a $k$-basis
for $H_*(\Omega BG_p^{\wedge},k)$ is given by alternating
words in $\beta$ and $\gamma$ (such as $\beta\gamma\beta$ or the empty word),
and $\alpha$ times these alternating words.

There are three simple modules. Indeed, the quotient of $A_4$ by its
normal Sylow 2-subgroup is of order 3; supposing for simplicity that $k$
contains three cube roots of unity $1, \omega, \overline{\omega}$, the simples
correspond to how a chosen generator acts. The projective covers of the
three simple modules are
\[ P(k)=\vcenter{\xymatrix@=2mm{&k\ar@{-}[dl]\ar@{-}[dr]\\
\omega\ar@{-}[dr]&&\bar\omega\ar@{-}[dl]\\&k}},\quad
P(\omega)=\vcenter{\xymatrix@=2mm{&\omega\ar@{-}[dl]\ar@{-}[dr]\\
\bar\omega\ar@{-}[dr]&&k\ar@{-}[dl]\\&\omega}},\quad
P(\bar\omega)=\vcenter{\xymatrix@=2mm{&\bar\omega
\ar@{-}[dl]\ar@{-}[dr]\\k\ar@{-}[dr]&&\omega
\ar@{-}[dl]\\&\bar\omega}}. \]
With this information to hand, the reader may  verify the assertion
about the loop space homology ring by using squeezed resolutions.

\section{s-hypersurface spaces and z-hypersurface spaces}
\label{sec:shyperiszhyper}

In the algebraic setting the remarkable fact is that modules
over hypersurfaces have eventually periodic resolutions, and hence that
they are hhci of codimension 1. The purpose of this section is to sketch 
a similar result for spaces. The argument is given in detail in \cite{pzci}.

\begin{thm}
\label{thm:shyperiszhyper}
If $X$ is an s-hypersurface space with fibre sphere of dimension $\geq 2$
then $X$ is a z-hypersurface space.
\end{thm}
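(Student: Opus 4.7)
The plan is to construct the periodicity operator $\chi$ demanded by the z-hypersurface definition by mimicking Gulliksen's construction from Subsections \ref{subsec:operator}--\ref{subsec:eventperiod}, with the Euler class of the spherical fibration playing the role of the defining equation. Write $S=C^*(B\Gamma)$ and $R=C^*(X)$. By the Eilenberg--Moore theorem (applicable since $n\geq 2$ makes $S^n$ simply connected), the fibration $S^n\lra X\lra B\Gamma$ yields $R\tensor_S k\simeq C^*(S^n)=\Lambda_k(\tau)$ with $|\tau|=n$, so $S\lra R$ is a candidate normalization; since $B\Gamma$ is g-regular with Noetherian cohomology and $R$ is finitely built from $S$ (as $S^n$ has finite cohomology), it is in fact a g-normalization. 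It therefore suffices to produce an element $\chi\in Z\sfD(R)$ of non-zero degree with $M/\chi$ small over $R$ whenever $M$ is small over $S$.

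First I would produce, from the Gysin sequence of the fibration (equivalently, the Thom isomorphism for the associated Thom spectrum), a cofibre sequence of $S$-modules
$$\Sigma^{n+1}S\stackrel{e}\lra S\lra R,$$
with $e$ the Euler class. Applying $R\tensor_S(-)$ and using that $R$ is the cofibre of $e$, the induced map $\Sigma^{n+1}R\lra R$ is null-homotopic, so we obtain a cofibre sequence of $(R|S)$-bimodules
$$\Sigma^{n+1}R\stackrel{0}\lra R\lra R\tensor_S R.$$
Rotating and extracting the Postnikov $k$-invariant of the two-stage bimodule $R\tensor_S R$ produces a bimodule map $\chi\colon R\lra\Sigma^{n+3}R$, i.e.\ an element of $HH^{n+3}(R|S)$, of non-zero codegree because $n\geq 2$. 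Via the canonical ring map $HH^*(R|S)\lra Z\sfD(R)$ this yields the desired central operator; we in fact obtain the stronger hh-hypersurface condition.

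The smallness check is then immediate. The Postnikov fibre sequence of bimodules $\Sigma^{n+2}R\lra R\tensor_S R\lra R$ extends to $R\stackrel{\chi}\lra\Sigma^{n+3}R$, so the cofibre of $\chi$ is a shift of $R\tensor_S R$. For any left $R$-module $M$, tensoring on the right with $M$ over $R$ identifies the cofibre $M/\chi$ with the corresponding shift of $R\tensor_S M$. If $M$ is finitely generated, i.e.\ small over $S$, then $R\tensor_S M$ is finitely built from $R\tensor_S S=R$ and hence small over $R$, verifying the z-hypersurface condition.

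The main obstacle is the first step: realising the Euler class as an $S$-linear self-map of $S$ whose cofibre is genuinely $R$, in the generality needed. Rationally, or at a prime $p$ for an orientable spherical fibration, this is classical Gysin/Thom theory. A general spherical fibration at an odd prime may however be non-orientable, and then one must either absorb the orientation local system into $S$ (for instance by passing to a suitable cover of $B\Gamma$) or use appropriately twisted coefficients in the Thom isomorphism before the construction above applies cleanly. A secondary technical point, routine but unavoidable, is verifying that the Gulliksen-type operator obtained as a Postnikov $k$-invariant really does act on $\sfD(R)$ via the composite $M\mapsto R\tensor_S M\mapsto\Sigma^{n+3}M$ we have written down; this is ensured by the naturality of the bimodule fibre sequence in $M$.
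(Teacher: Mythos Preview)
Your overall strategy---produce a bimodule self-map $\chi$ of $R$ whose cofibre is small as an $R^e$-module and then tensor down---is exactly the right one, and your final paragraph (that $M/\chi$ is a shift of $R\tensor_S M$, hence small when $M$ is small over $S$) is correct once $\chi$ exists. The gap is in the construction of $\chi$.

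Applying $R\tensor_S(-)$ to the Gysin cofibre sequence $\Sigma^{n+1}S\stackrel{e}\lra S\lra R$ yields a cofibre sequence $\Sigma^{n+1}R\lra R\lra R\tensor_S R$ of \emph{left $R$-modules}, not of $(R|S)$-bimodules. The map $R\lra R^e$ you obtain is the left-unit inclusion $r\mapsto r\tensor 1$, which is not right $R$-linear (it does not intertwine the right action on $R^e$ via the second tensor factor), so the sequence carries no natural $R^e$-module structure. Your nullity claim for the first map is likewise only a statement about left $R$-modules. In fact, if this really were a bimodule sequence with null first map, $R$ would split off $R^e$ as a bimodule, your Postnikov $k$-invariant would vanish, and $\chi$ would be zero---useless for the z-condition.

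What is actually needed is a bimodule cofibre sequence involving the \emph{multiplication} map $\mu\colon R^e\lra R$ rather than the unit, together with an identification of its fibre with a shift of $R$ as an $R^e$-module. The paper obtains this by pulling back the spherical fibration along itself to get the split fibration $S^n\lra X\times_{B\Gamma}X\lra X$, whose diagonal section realises $\mu$. The associated fibration $\Omega S^n\lra X\stackrel{\Delta}\lra X\times_{B\Gamma}X$ has $C^*(X\times_{B\Gamma}X)\simeq R^e$ by Eilenberg--Moore, and an analysis of its Serre spectral sequence (Theorem~\ref{thm:loopspherebuild}) produces a genuine cofibre sequence of $R^e$-modules $R^e\lra R\lra\Sigma_a R$ (one step for $n$ odd, two for $n$ even). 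The second map is the required $\chi$, and then your smallness argument applies verbatim. This route also sidesteps the orientability issue you raise, since no Thom isomorphism is invoked.
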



\subsection{Split spherical fibrations.}
\label{subsec:ssf}
The key in algebra was to consider  bimodules, for which we consider the
(multiplicative) exact sequence
$$R \lra R^e \lra R^e \tensor_{R} k , $$
where the first map is a monomorphism split by the map $\mu$ along which $R$ acquires
its structure as an $R^e$-module structure.
This corresponds to the pullback fibration
$$X \lla X\times_{B\Gamma}X \lla S^n, $$
split by the diagonal
$$\Delta : X \lra X \times_{B\Gamma }X$$
along which the cochains on $X$ becomes a bimodule.
To simplify notation, we consider a more general situation: a fibration
$$B \lla E \lla S^n$$
with section $s: B \lra E$. The case of immediate interest  is $B=X$,
$E=X\times_{B\Gamma}X$, where a $C^*(E)$-module is a $C^*(X)$-bimodule.

Note that by the third isomorphism theorem for fibrations, there is
a fibration
$$\Omega S^n \lra B \stackrel{s}\lra E. $$
This gives the required
input for the following theorem. The strength of the result is that the
cofibre sequences are of $C^*(E)$-modules.

\begin{thm}
\label{thm:loopspherebuild}
Suppose given a fibration $\Omega S^n \lra B \stackrel{s}\lra E$
with $n\geq 2$.

(i) If $n$ is odd,
then there is a cofibre sequence of $C^*(E)$-modules
$$\Sigma_{n-1} C^*(B) \lla C^*(B) \lla C^*(E). $$

(ii)  If $n$ is even,
then there are cofibre sequences of $C^*(E)$-modules
$$C \lla C^*(B) \lla C^*(E) $$
and
$$\Sigma_{2n-2} C^*(B) \lla C \lla \Sigma_{n-1}C^*(E). $$
In particular the fibre of the composite
$$C^*(B)\lra C \lra \Sigma_{2n-2}C^*(B)$$
is a small $C^*(E)$-module constructed with one cell in codegree
0 and one in codegree $n-1$.
\end{thm}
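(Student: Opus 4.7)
The plan is to extract the cofibre sequences from a Thom-collapse cofibre sequence associated to the given section $s$. From the cofibre sequence of pointed spaces $B \xrightarrow{s} E \to T$ with $T := E/s(B)$, the cochain functor produces a cofibre sequence of $C^*(E)$-modules
\[
C^*(T) \to C^*(E) \xrightarrow{s^*} C^*(B),
\]
and rotating gives $C^*(E) \to C^*(B) \to \Sigma C^*(T)$. Both parts will be obtained by analysing $\Sigma C^*(T)$ appropriately, where the parity of $n$ controls how much can be said in one step.

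For part (i) with $n$ odd, the section $s$ splits the underlying $(n+1)$-plane bundle $\xi$ over $B$ as $\epsilon \oplus \eta$, with $\epsilon$ the trivial line determined by $s$, and identifies $T$ as the Thom space of the $n$-plane bundle $\eta$. Under the orientability conditions available in our setting (e.g.\ simply connected base, or characteristic $2$), the Thom isomorphism gives $C^*(T) \simeq \Sigma_n C^*(B)$, and since $\Sigma\Sigma_n = \Sigma_{n-1}$ this yields the cofibre sequence $C^*(E) \to C^*(B) \to \Sigma_{n-1} C^*(B)$ at once.

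For part (ii) with $n$ even, set $C := \Sigma C^*(T)$, so the first cofibre sequence $C^*(E) \to C^*(B) \to C$ is immediate. The difficulty is that for $n$ even one cannot identify $C$ as a single shift of $C^*(B)$, because $C^*(\Omega S^n) \simeq \Lambda_k(x_{n-1}) \otimes \Gamma_k[y_{2n-2}]$ has two essential generators rather than one. The algebraic factorisation $k \to \Lambda_k(x_{n-1}) \to \Lambda_k(x_{n-1}) \otimes \Gamma_k[y_{2n-2}]$ supplies a $k$-level cofibre sequence $\Sigma_{n-1} k \to \widetilde{C}^*(\Omega S^n) \to \Sigma_{2n-2} C^*(\Omega S^n)$ whose base-change to $k$ matches the expected cofibre sequence $\Sigma_{n-1} C^*(E) \to C \to \Sigma_{2n-2} C^*(B)$. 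I would lift this to $C^*(E)$-modules by exploiting the James filtration $S^{n-1} = J_1 \subset J_2 \subset \Omega S^n$ with $J_2/J_1 \simeq S^{2(n-1)}$: transporting the two stages across the fibration via two successive Thom-collapse arguments on intermediate spaces produces the two terms of the desired sequence.

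The in-particular statement then follows formally from the octahedral axiom applied to the composite $C^*(B) \to C \to \Sigma_{2n-2} C^*(B)$: the fibre of $C^*(B) \to C$ is $C^*(E)$ (from the first cofibre sequence) and the fibre of $C \to \Sigma_{2n-2} C^*(B)$ is $\Sigma_{n-1} C^*(E)$ (from the second), so the fibre $F$ of the composite sits in a cofibre sequence $C^*(E) \to F \to \Sigma_{n-1} C^*(E)$ of $C^*(E)$-modules, exhibiting $F$ as a small module built from cells in codegrees $0$ and $n-1$ as claimed. The chief obstacle is the construction in part (ii): realising the algebraic factorisation of $C^*(\Omega S^n)$ at the level of $C^*(E)$-modules is significantly more delicate than the single Thom isomorphism used for part (i), and will require either a careful two-stage Thom-collapse argument on intermediate sphere bundles or an obstruction-theoretic verification that the relevant extension class exists.
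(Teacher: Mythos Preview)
Your first step---forming the cofibre of $s^*: C^*(E) \to C^*(B)$ as a $C^*(E)$-module---is fine, and your octahedral argument for the ``in particular'' clause is correct. The gap is in identifying that cofibre. For part (i) you invoke a Thom isomorphism, but this requires the $S^n$-bundle $E \to B$ to be the sphere bundle of an orientable vector bundle. Neither that, nor even the existence of a sphere bundle with section, is part of the hypothesis: the theorem assumes only a fibration $\Omega S^n \to B \to E$. Even in the intended application (the diagonal section of the pullback $X\times_{B\Gamma}X \to X$ of a spherical fibration $X\to B\Gamma$), the spherical fibration is not assumed to be linear, so no Thom isomorphism is available. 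For part (ii) you have, as you acknowledge, only a plan: the James filtration suggestion is not carried through to an actual cofibre sequence of $C^*(E)$-modules, and this is precisely where the content lies.

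The paper's route avoids any space-level identification. The strategy is to analyse the Serre spectral sequence of $\Omega S^n \to B \to E$: the known structure of $H^*(\Omega S^n)$ forces the required periodicity (simple for $n$ odd, two-step for $n$ even) on $H^*(B)$ as an $H^*(E)$-module, and one then lifts this cohomological conclusion to a statement about $C^*(E)$-modules. This handles both parities uniformly under the bare fibration hypothesis, with no need for vector bundles, Thom spaces, or the James filtration; the details are in the cited reference.
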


\begin{remark}
Note that in either case we obtain a cofibre sequence
$$ K\lla C^*(B) \lla \Sigma_a C^*(B)$$
of $C^*(E)$-modules with $K$ small.
\end{remark}

The strategy is to first prove the counterparts
in cohomology by looking at the Serre spectral sequence
of the fibration from Part (i) and then lift the conclusion to
the level of cochains.

\section{Growth of z-hypersurface resolutions}
\label{sec:zhyperisghyper}
In this brief section we prove perhaps the simplest implication between the hypersurface
conditions: a z-hypersurface is a g-hypersurface.

\begin{lemma}
If $R$ is a z-hypersurface and $k$ is a field then the vector spaces $\pi_n(\cE)$ are of
bounded dimension and $R$ is a g-hypersurface.
\end{lemma}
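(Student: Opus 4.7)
The plan is to apply the hypothesis directly to the module $M=k$ and exploit the centrality of $z$. First note that $k$ is finitely generated over $R$ (it is small over any normalization $S\to R$, since $k=R\tensor_S k$ and $k$ is small over itself). Thus the central class $z$ of nonzero degree $d$ gives a natural map $z\colon k\lra \Sigma^{-d}k$ whose cofibre $k/z$ is small over $R$.

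Applying $\Hom_R(-,k)$ to the triangle $k\xrightarrow{z}\Sigma^{-d}k\lra k/z$ produces a fibre triangle
$$F\lra \Sigma^d \cE \stackrel{\cdot z}\lra \cE, \qquad F:=\Hom_R(k/z,k).$$
Smallness of $k/z$ as an $R$-module translates into $F$ being finitely built from $\Hom_R(R,k)=k$ as a $k$-module spectrum, so $C:=\dim_k \pi_*(F)$ is finite. The associated long exact sequence reads
$$\cdots \lra \pi_n(F)\lra \pi_{n-d}(\cE)\stackrel{\cdot z}\lra \pi_n(\cE)\lra \pi_{n-1}(F)\lra \cdots,$$
so summing dimensions over all $n$ shows that both the kernel and the cokernel of multiplication by $z$ on $\pi_*(\cE)$ have total $k$-dimension at most $C$; in particular $\pi_*(\cE)/z\cdot \pi_*(\cE)$ has total $k$-dimension $\leq C$.

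Because $z$ comes from $Z\sfD(R)$ it is central in $\pi_*(\cE)$, so $\pi_*(\cE)$ is a graded module over the polynomial ring $k[z]$. To conclude, I would invoke graded Nakayama: granting that $\pi_*(\cE)$ is bounded in the direction opposite to $\cdot z$ (which in the motivating case $R=C^*(X)$ follows from $\pi_*(\cE)=H_*(\Omega X;k)$ being concentrated in nonnegative degrees, and in general from the convergence of the spectral sequence $\Ext_{R_*}^{*,*}(k,k)\Rightarrow \pi_*(\cE)$ using that $R_*$ is Noetherian by c-normalizability), the finite-dimensionality of $\pi_*(\cE)/z\cdot \pi_*(\cE)$ forces $\pi_*(\cE)$ to be generated over $k[z]$ by at most $C$ homogeneous elements. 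Since $k[z]$ contributes at most a one-dimensional piece in each degree, each generator lands in a single $\pi_n(\cE)$ per orbit of $\cdot z$, yielding $\dim_k\pi_n(\cE)\leq C$ uniformly in $n$, which is precisely the $g$-hypersurface condition.

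The main obstacle is the Nakayama step, which requires a suitable one-sided boundedness of $\pi_*(\cE)$. An alternative that sidesteps this is to read the bound directly from the long exact sequence: for $n$ outside the (finite) support of $\pi_*(F)$, the map $\cdot z\colon \pi_{n-d}(\cE)\lraiso \pi_n(\cE)$ is an isomorphism, so $\dim_k \pi_n(\cE)$ is eventually periodic (with period $d$) in both directions; combined with the finiteness of each individual $\pi_n(\cE)$ (which follows from the $\Ext$ spectral sequence over a Noetherian coefficient ring), this gives the required uniform bound.
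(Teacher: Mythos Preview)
Your proposal is correct, and in fact your ``alternative'' at the end is precisely the paper's argument: apply $\Hom_R(\cdot,k)$ to the triangle $\Sigma^n k \to k \to L$ with $L$ small, observe that $\Hom_R(L,k)$ has homotopy supported in a finite range $[-N,N]$, conclude that $\pi_{s+n}(\cE)\cong\pi_s(\cE)$ outside that range, and take the maximum over the finitely many degrees in $[-N-n,N+n]$.

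Your first approach via centrality of $z$ in $\pi_*(\cE)$ and graded Nakayama is a genuine detour. It is not wrong, but it introduces the boundedness obstacle you flag, and the workaround you then propose (eventual periodicity plus degreewise finiteness from the Ext spectral sequence) already gives the result directly without ever invoking Nakayama or the $k[z]$-module structure. The paper goes straight to this periodicity argument. One small point: your justification that $k$ is finitely generated is slightly garbled (the equation $k=R\tensor_S k$ is not the reason); the correct reason is that any g-normalization $S\to R$ has $S$ g-regular, and g-regularity of $S$ means exactly that $k$ is small over $S$.
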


\begin{proof}
Since $R$ is a z-hypersurface, then in particular there is a projective resolution of $k$ which is 
eventually periodic. In other words there is a triangle
$$\Sigma^{n}k\lra k \lra L$$ 
with $n\neq 0$ and $L$ small over $R$. Applying $\Hom_R(\cdot , k)$ we find a triangle
$$\Sigma^{-n}\cE \lla \cE \lla \Hom_R(L,k). $$
Since $L$ is finitely built from $R$, $\Hom_R(L,k)$ is finite dimensional over $k$, and hence only 
nonzero in a finite range of degrees (say $[-N,N]$). Outside that range we
have $\pi_{s+n}(\cE)\cong \pi_s(\cE)$, so every homotopy group is isomorphic as a $k$-vector 
space to one in the range $[-N-n, N+n]$ and the bound is the largest of these dimensions. 
\end{proof}

\begin{remark}
Essentially the same argument shows that a cofibre sequence $\Sigma^n A \lra A \lra B$
with two terms the same means that the growth rate of $A$ is at most one more than that of $B$.
\end{remark}

\part{Gorenstein rings}
This is the final part taking a  particular classes of commutative local 
rings, and  giving homotopy invariant counterparts of the definitions. 
The justification consists of the dual facts that the new definition reduces to the old in 
the classical setting and that the new definition covers and illuminates new examples.The generalization of the Gorenstein condition is  perhaps the most
successful of the three,  since there are so many Gorenstein 
ring spectra, and this approach provides consequences that are both unexpected and 
very concrete. 

In Section \ref{sec:Gor} we introduce the homotopy invariant version
of the Gorenstein condition an in Section \ref{sec:GorD} the
associated Gorenstein duality property and its implications for
coefficients. We then provide some basic tools for dealing with the
Gorenstein condition: ascent and descent in Section
\ref{sec:Gorascdesc} and Morita invariance in Section
\ref{sec:MoritaGorenstein}. We then turn to examples in earnest, with
a discussion of Gorenstein duality for group cohomology in Section
\ref{sec:GorHBG}, for rational homotopy theory in Section
\ref{sec:GorQ} and a brief pointer to other examples in Section
\ref{sec:Goregs}.

\section{The Gorenstein condition}
\label{sec:Gor}
In this section we quickly recall the definition of a Gorenstein local
ring in a form which also makes sense for ring spectra with a map to $k$.

\subsection{Gorenstein local rings}
The usual definition of a Gorenstein local ring is that $R$ is of finite injective dimension over itself. But
it is then proved that in fact it is then of injective dimension equal
to $r= \mbox{Krull dimension}(R)$ and
that $\Ext^*_R(k,R)=\Ext^r_R(k,R)=k$. Conversely, if this holds, the ring is Gorenstein. There are various
other characterizations of Gorenstein rings, including a duality statement that we will discuss
shortly, but this is enough to suggest the definition for ring spectra.

\subsection{Gorenstein ring spectra}
\label{subsec:Gor}
Ultimately, we want to consider  duality phenomena modelled on those in commutative
algebra of Gorenstein local rings, so we will develop the theory for
spectra in parallel. Corresponding to the Noetherian condition we restrict the class 
of ring spectra to those which are proxy-small in the sense of
Definition \ref{defn:proxysmall}.  We begin with the core Gorenstein 
condition and move onto duality  in due course.  These definitions come
from \cite{DGI}. 

\subsection{The Gorenstein condition}
We  say that   $R\lra k$ is {\em  Gorenstein} of shift
$a$ (and write $\shift (R)=a$) if we have an equivalence 
$$\Hom_R(k, R)\simeq \Sigma^a k $$
of $R$-modules.

\begin{remark}
We will say that it is c-Gorenstein if $R_*\lra k$ is a Gorenstein local ring. As usual the spectral 
sequence
$$\Ext^{*,*}_{R_*}(k,R_*)\Rightarrow \pi_*(\Hom_R(k,R))$$ 
shows that a c-Gorenstein ring spectrum is Gorenstein, but we will give many examples of Gorenstein 
ring spectra which are not c-Gorenstein. 
\end{remark}



\section{Gorenstein duality}
\label{sec:GorD}
Although the Gorenstein condition itself is convenient to work with, the real reason for
considering it is the duality property that it implies. To formulate
this, we use local cohomology in the sense of Grothendieck, and the
reader may wish to refer to Appendix \ref{sec:loccoh} for the basic
definitions.  

\subsection{Classical Gorenstein duality}
In classical local commutative algebra, the Gorenstein duality property is
that all local cohomology is in a single cohomological degree, where it is the
injective hull $I(k)$  of the residue
field. To give a formula, we write $\Gamma_{\fm}M$ for the $\fm$-power torsion in an
$R$-module $M$, and $H^*_{\fm}(M)$ for the local cohomology of
$M$, recalling Grothendieck's theorem that if $R$ is Noetherian,
$H^*_{\fm}(M)=\R^*\Gamma_{\fm}(M)$. 
The Gorenstein duality statement  for a local ring of Krull dimension $r$ therefore states
$$H^*_{\frak{m}}(R)=H^r_{\frak{m}}(R)=I(k). $$
If $R$ is a $k$-algebra, $I(k)=R^{\vee}=\Gamma_{\fm} \Hom_k(R,k)$.

\subsection{Gorenstein duality for $k$-algebra spectra}
Turning to ring spectra, we will  treat the case that  $R$ is a $k$-algebra. This simplifies
things considerably, and covers many interesting examples. The more general case requires 
a discussion of Matlis lifts, for which we refer the reader to \cite{DGI}.

In the case of $k$-algebras,  we may again 
define  $R^{\vee}=\cell_k(\Hom_k(R, k))$ and observe this has the Matlis lifting property
$$\Hom_R(T, R^{\vee})\simeq \Hom_k(T,k) $$
for any $T$ built from $k$. 

In particular, if $R$ is Gorenstein of shift $a$ we have equivalences
of $R$-modules
$$\Hom_R(k, \cell_kR)\simeq \Hom_R(k,R)\stackrel{(G)}\simeq \Sigma^a
k\stackrel{(M)}\simeq \Hom_R(k, \Sigma^a R^{\vee}),  $$
where the equivalence (G) is the Gorenstein property and the
equivalence (M) is the Matlis lifting property. 
We would like to remove the $\Hom_R(k, \cdot)$ to deduce
$$\cell_kR\simeq \Sigma^a R^{\vee}. $$
Morita theory  (specifically Lemma \ref{proxycell}) says that if $R$ is proxy-regular we may make this
deduction provided $R$ is orientably Gorenstein in the sense that the right
actions of $\cE=\Hom_R(k,k)$  on $\Sigma^ak$ implied by the two
equivalences (G) and (M) agree. 

We note that $\cell_k(R)$  is a covariant functor of $R$ whilst $R^{\vee}$ is a contravariant functor of
$R$, so an equivalence between them is a form of duality: when it
holds we say $R$ satisfies {\em Gorenstein duality}.

\subsection{Automatic orientability} 

There are a number of important cases where orientability is
automatic because $\cE$ has a unique action on $k$, and in this case
the Gorenstein condition automatically implies Gorenstein duality. 

The first case of this is when  $R$ is a classical commutative local
ring, although of course we knew already that  in this case 
the Gorenstein condition is equivalent to Gorenstein duality. 

From our present point of view, we see this as a consequence of
connectivity:  $\cE$ (whose homology is
$\Ext_R^*(k,k)$) has a unique action on $k$.
The same argument applies when the ring spectrum is both a $k$-algebra and connected.

\begin{prop}
\label{prop:autoGor}
Suppose $R$ is a proxy-regular, connected $k$-algebra
and $\pi_*(R)$ is Noetherian with $\pi_0(R)=k$ and 
maximal ideal $\fm$ of positive degree elements.  If $R$ is 
Gorenstein of shift $a$, then it is automatically orientable and so 
has Gorenstein duality. 
\end{prop}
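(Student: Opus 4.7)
The approach is to leverage the connectivity hypotheses to show that $\cE = \Hom_R(k,k)$ admits essentially one right action on $\Sigma^a k$, forcing the two actions implicit in (G) and (M) to agree.

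First I would determine the connectivity of $\cE$. Under the hypotheses, $R$ is connective with $\pi_0(R) = k$ and $\fm$ concentrated in strictly positive degrees, so $k$ admits a resolution built from copies of $R$ in non-negative degrees. The standard spectral sequence
\[
E_2^{s,t} = \Ext^{s,t}_{\pi_*R}(k,k) \Rightarrow \pi_{t-s}(\cE)
\]
therefore yields $\pi_0(\cE) = k$ and $\pi_i(\cE) = 0$ for $i > 0$; in other words, $\cE$ is a \emph{coconnective} $k$-algebra with bottom homotopy group $k$.

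Next I would reformulate the two actions on $\Sigma^a k$ in parallel. The equivalence (G) makes $\Sigma^a k$ into a right $\cE$-module by identifying it with $\Hom_R(k, R)$, on which $\cE$ acts by precomposition in the first slot. The equivalence (M) makes $\Sigma^a k$ into a right $\cE$-module via its identification with $\Hom_R(k, \Sigma^a R^{\vee})$, again by precomposition. Both actions descend from an $R$-algebra map $\cE \to \End_R(\Sigma^a k)^{op} \simeq \cE^{op}$ lying over the augmentation to $\pi_0$, and both restrict on $\pi_0$ to the standard action $k\otimes k \to k$ of the field $k$ on itself. Orientability is precisely the statement that these two $R$-algebra maps are homotopic.

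The central step is to show that, up to homotopy, there is only one such map. Because $\cE$ is coconnective with $\pi_0(\cE) = k$ and $\Sigma^a k$ has homotopy concentrated in the single degree $a$, any $R$-algebra map $\cE \to \cE^{op}$ lifting the canonical map $\cE \to k \to \cE^{op}$ is determined up to equivalence: the successive obstructions are controlled by the pairing of $\pi_i(\cE)$ for $i<0$ against a target whose homotopy lives in a single degree, and a direct degree count shows all such obstructions vanish. Hence the two actions agree, $R$ is orientably Gorenstein, and applying Lemma \ref{proxycell} to the composite chain
\[
\Hom_R(k, \cell_k R) \simeq \Hom_R(k,R) \stackrel{(G)}\simeq \Sigma^a k \stackrel{(M)}\simeq \Hom_R(k, \Sigma^a R^{\vee})
\]
strips off $\Hom_R(k,-)$ to yield the Gorenstein duality equivalence $\cell_k R \simeq \Sigma^a R^{\vee}$. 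The main obstacle I anticipate is making the obstruction-vanishing step rigorous; everything else is essentially bookkeeping in coconnective $k$-algebras, but the cell-by-cell comparison of the two $\cE$-module structures requires some care to extract from the coconnectivity of $\cE$ and the one-degree concentration of $\pi_*(\Sigma^a k)$.
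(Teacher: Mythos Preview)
Your overall strategy---establish coconnectivity of $\cE$ with $\pi_0(\cE)=k$ and deduce uniqueness of the right $\cE$-action on $k$---matches the paper's, and your spectral sequence argument for coconnectivity is exactly what the paper does. The difference is in how uniqueness is extracted.

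You set up the problem as comparing two $R$-algebra maps $\cE \to \cE^{op}$ and then invoke an obstruction-theory sketch, which (as you yourself flag) is the part that would need real work to make rigorous. The paper bypasses this entirely by exploiting the hypothesis that $R$ is a $k$-algebra in a sharper way: because everything is $k$-linear, the action of $\cE$ on $k$ factors through the map
\[
\cE = \Hom_R(k,k) \lra \Hom_k(k,k) = k,
\]
and since $k$ is an Eilenberg-MacLane spectrum the action is therefore determined by $\pi_0(\cE)=k$ acting on $k$, which is unique. In other words, rather than comparing two maps into $\cE^{op}$ you only need to compare two ring maps $\cE \to k$, and coconnectivity plus $\pi_0(\cE)=k$ makes that immediate---no cell-by-cell obstruction analysis required. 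Your route would work in principle, but the factorization through $\Hom_k(k,k)$ is the clean one-line replacement for your ``central step.''
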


\begin{proof} 
First we argue that if $R$ is Gorenstein, it is automatically
orientable. Indeed, we show that $\cE$ has a
unique action on $k$. Since $R$ is a $k$-algebra, the
action of $\cE$ on $k$ factors through 
$$\cE=\Hom_R(k,k) \lra \Hom_k(k,k)=k,$$
so since $k$ is an Eilenberg-MacLane spectrum,  
the action is through $\pi_0(\cE)$. Now we observe that since $R$ is connected, 
$\Ext_{R_*}^s(k,k)$ is in degrees $\leq -s$, so that the spectral
sequence for calculating $\pi_*(\Hom_R(k,k))$ shows $\cE$ is
coconnective with $\pi_0(\cE)=k$ which must act trivially on $k$. 
\end{proof}

We may go a little further to the nilpotent case. 

\begin{lemma}
If $X$ is connected with $\pi_1(X)$ a finite $p$-group and $k$ is of
characteristic $p$ then  if $C^*(X)$ is Gorenstein it automatically
has Gorenstein duality.  
\end{lemma}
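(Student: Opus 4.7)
The plan is to imitate Proposition~\ref{prop:autoGor}: the Gorenstein condition together with Matlis lifting for the $k$-algebra $R=C^*(X)$ produces two right $\cE$-module structures on $\Sigma^a k$, where $\cE = \Hom_R(k,k)$, and Gorenstein duality is equivalent to the \emph{orientability} statement that these two structures agree. It suffices to prove that $\cE$ admits only one right action on $k$ up to equivalence.

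First I would identify $\cE$ using the Eilenberg-Moore theorem. After replacing $X$ by $X_p^{\wedge}$ (which is harmless, since $X$ is connected with finite fundamental group, hence $p$-good, and $C^*(X;\Fp)=C^*(X_p^{\wedge};\Fp)$), case $(p)$ of Subsection~\ref{subsec:cochains} applies and gives
$$\cE \simeq C_*(\Omega X; k),$$
a connective ring spectrum (since $\Omega X$ is a space), with
$$\pi_0(\cE) = H_0(\Omega X;k) = k[\pi_0(\Omega X)] = k[\pi_1(X)].$$

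Next, exactly as in Proposition~\ref{prop:autoGor}, an action of $\cE$ on $k$ is recorded by a map of ring spectra $\cE \to \mathrm{End}_k(k) = k$. Since $k$ is Eilenberg-MacLane and $\cE$ is connective, such a map is classified by its effect on $\pi_0$, namely a $k$-algebra augmentation $\epsilon : k[\pi_1(X)]\to k$. Any such $\epsilon$ sends each $g\in\pi_1(X)$ to a unit of $p$-power order in $k^{\times}$; since $\mathrm{char}(k)=p$, the identity $\epsilon(g)^{p^n}=1$ forces $(\epsilon(g)-1)^{p^n}=0$, whence $\epsilon(g)=1$. Hence $\epsilon$ is the canonical augmentation, the $\cE$-action on $k$ is essentially unique, orientability holds, and Gorenstein duality follows from the general Morita/Matlis machinery.

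The only delicate point is the claim that a ring map $\cE\to k$ is genuinely determined up to homotopy by its $\pi_0$-level augmentation. This is the same orientation step invoked in~\ref{prop:autoGor}: connectivity of $\cE$ together with the Eilenberg-MacLane nature of $k$ kills the relevant obstructions (via a universal coefficient argument, since $\pi_{-1}(\cE)=0$), so the only possible source of ambiguity is the augmentation of the group algebra, which we have just ruled out.
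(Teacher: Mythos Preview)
Your proof is correct and follows essentially the same route as the paper's. Both arguments reduce orientability to showing that $\cE$ has a unique (right) action on $k$, identify $\pi_0(\cE)$ with $k[\pi_1(X)]$, and conclude by observing that a finite $p$-group has no nontrivial one-dimensional representations over a field of characteristic $p$. Your version is somewhat more explicit---you spell out the $p$-completion step needed to invoke Case~$p$ of Eilenberg--Moore, and you give the Frobenius argument $(\epsilon(g)-1)^{p^n}=0$ for uniqueness of the augmentation---whereas the paper compresses all of this into two sentences, but the content is the same.
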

\begin{proof}
Again we find $\cE$ has a unique action on $k$. Since $\cE$ is a
$k$-algebra, it acts through $\pi_0(\cE)=H_0(\Omega
X)=k[\pi_1(X)]$. By the characteristic assumption, this has a unique
action on $k$.
\end{proof}

\subsection{The local cohomology theorem}

In many cases (see Remark \ref{rk:cellular} of Appendix Dy) one can give an algebraic description of the
$k$-cellularization and infer algebraic consequences of
Gorenstein duality.  For simplicity we restrict to local $k$-algebras,
although the methods apply more generally.

\begin{lemma}
\label{lem:loccohss}
If $R_*$ is a  Noetherian $k$-algebra, with maximal ideal $\fm$ and residue field
$k$, then $k$-cellularization coincides with 
the derived $\fm$-power torsion functor.    Accordingly, if $R$ has Gorenstein duality, there is a local cohomology spectral sequence
$$H^*_{\fm}(R_*)\Rightarrow \Sigma^a R_*^{\vee}. $$
\end{lemma}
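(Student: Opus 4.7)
The plan is to prove the two assertions in turn: first identify $\cell_k$ with the derived $\fm$-power torsion functor $\Gamma_{\fm}$, then extract the spectral sequence from Gorenstein duality by passage to homotopy groups.

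For the first identification, I would construct a small proxy for $k$ out of a Koszul complex in $R$-modules. Since $R_*$ is Noetherian local, pick a finite generating sequence $\bx=(x_1,\ldots,x_n)$ for $\fm$, and lift each $x_i$ to a map $\Sigma^{d_i}R\lra R$ of $R$-modules; write $K(\bx)$ for the iterated mapping cone (the Koszul spectrum). Exactly as in the classical argument recalled in Appendix B, $K(\bx)$ is small, is finitely built from $k$, and builds $k$, so it is a small proxy for $k$. Then Lemmas \ref{proxycell} and \ref{cellsmashing} give
$$\cell_k M \simeq (\cell_k R)\tensor_R M.$$
Independently, the stable Koszul complex $\Kinfty(\bx):=\bigotimes_{i=1}^{n} \mathrm{fib}(R\lra R[x_i^{-1}])$ computes derived $\fm$-power torsion: its homotopy groups are $H^*_{\fm}(R_*)$, and it agrees (up to equivalence of $R$-modules) with $\cell_k R$ because $\Kinfty(\bx)$ is $k$-cellular (being built from $K(\bx)$) and $\Kinfty(\bx)\tensor_R M \lra M$ is a $K(\bx)$-equivalence. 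Smashing with $M$ therefore shows $\cell_k M \simeq \Kinfty(\bx)\tensor_R M \simeq \Gamma_{\fm}M$.

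Granted this, the spectral sequence is immediate. Gorenstein duality asserts an equivalence $\cell_k R \simeq \Sigma^a R^{\vee}$ of $R$-modules. By Part 1, the left-hand side is $\Gamma_{\fm}R$, and the hyperhomology spectral sequence of the stable Koszul complex $\Kinfty(\bx)\tensor_R R$ has $E_2$-page the classical local cohomology of the graded ring $R_*$:
$$E_2^{*,*}=H^*_{\fm}(R_*)\Longrightarrow \pi_*(\Gamma_{\fm}R).$$
On the right-hand side, since $R$ is a $k$-algebra, $\pi_*\Hom_k(R,k)=R_*^{\vee}$ and $R^{\vee}=\cell_k\Hom_k(R,k)$ has the same homotopy in the cellular range, so $\pi_*(\Sigma^a R^{\vee})=\Sigma^a R_*^{\vee}$. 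Combining gives the stated local cohomology spectral sequence $H^*_{\fm}(R_*)\Rightarrow \Sigma^a R_*^{\vee}$.

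The main obstacle is the first step: one must genuinely construct the Koszul spectrum $K(\bx)$ inside $R$-modules, verify the three proxy-small properties, and then match $\cell_k R$ with the stable Koszul complex $\Kinfty(\bx)$ so that its homotopy is identified with the classical local cohomology of $R_*$. Each ingredient is standard in the commutative algebra setting (Appendix B), but the lift to the spectrum setting requires care in choosing the maps $x_i$ and in checking that cellularization, torsion and the Koszul construction all commute with $\tensor_R M$; once these identifications are in place, the spectral sequence follows formally.
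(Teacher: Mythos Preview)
Your proposal is correct and follows essentially the same route the paper indicates. The paper does not give an explicit proof of this lemma; it simply points (just before the statement) to Remark~\ref{rk:cellular} in the appendix, which shows that $\Gamma_I M\lra M$ is the $K_1(I)$-cellularization, and then the spectral sequence is the one displayed as (\ref{algtopss}) in Appendix~\ref{sectoploccoh}. Your argument via the Koszul spectrum $K(\bx)$ as a small proxy for $k$, together with Lemma~\ref{cellsmashing} to reduce to $\cell_k R\simeq \Kinfty(\bx)$, is exactly this story spelled out in detail.
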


If $R$ has Gorenstein duality, Lemma \ref{lem:loccohss}  shows that the ring
$\pi_*(R)$ has very special properties (even if it falls short of
being Gorenstein), studied in \cite{GL}.  Some of these properties
were first observed by Benson and Carlson \cite{BC1,BC2} for group cohomology (corresponding to the special
case of the ring spectrum $R=C^*(BG)$, which we will see below has
Gorenstein duality).

To start with, we note that the spectral sequence collapses if $R_*$ is Cohen-Macaulay
to show $\lc^r(R_*)\cong \Sigma^{a+r} R_*^{\vee}$ (where $r$ is the Krull
dimension of $R_*$).  Thus the coefficient ring $R_*$ is also Gorenstein. 

The spectral sequence also collapses if $R_*$ is of Cohen-Macaulay defect 1, 
to give an exact sequence
$$
0 \lra \lc^r(R_*)\lra 
\Sigma^{a+r} R_*^{\vee}\lra \Sigma \lc^{r-1}(R_*)\lra 0.$$

In general,  local duality lets one deduce that  the cohomology ring $R_*$ is 
always generically Gorenstein. 

The collapse of the local cohomology theorem in the case of
Cohen-Macaulay defect $\leq 1$ has very  concrete consequences in that the 
 Hilbert series of $R_*$ satisfies a  suitable pair of  functional equations.

\begin{cor} \cite{ringlct}
\label{cor:functeqn}
Suppose $R$ has Gorenstein duality of shift $a$, that $\pi_*(R)$ is
Noetherian of Krull dimension $r$ and Hilbert series $p(s)=\sum_i\dim_k(R_i)s^i$.

If $\pi_*(R) $ is Cohen-Macaulay it is also Gorenstein, and the
Hilbert series satisfies 
$$p(1/s)=(-1)^rs^{r-a}p(s).$$

If $\pi_*(R) $ is almost Cohen-Macaulay it is also almost Gorenstein, and the
Hilbert series satisfies 
$$p(1/s)-(-1)^rs^{r-a}p(s)=(-1)^{r-1}(1+s)q(s) \mbox{ and } q(1/s)=(-1)^{r-1}s^{a-r+1}q(s).$$ 

In any case $\pi_*(R)$ is Gorenstein in codimension 0 and almost
Gorenstein in codimension 1. 
\end{cor}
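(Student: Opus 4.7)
The plan is to read both sides of the local cohomology spectral sequence of Lemma~\ref{lem:loccohss},
$$H^*_{\fm}(R_*) \;\Longrightarrow\; \Sigma^a R_*^{\vee},$$
as Hilbert series. Writing $h_i(s)$ for the Hilbert series of $H^i_{\fm}(R_*)$ and noting that $(R_*^{\vee})_n = (R_{-n})^{*}$ has Hilbert series $p(1/s)$, the alternating Euler characteristic of the spectral sequence yields a universal functional identity linking the $h_i(s)$ to an appropriate power of $s$ times $p(1/s)$. All three parts of the corollary then follow by combining this identity with a collapse of the spectral sequence or a localisation argument.

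In the Cohen-Macaulay case only $h_r$ is nonzero, the spectral sequence collapses at $E_2$, and one obtains an isomorphism between $H^r_{\fm}(R_*)$ and a shift of $R_*^{\vee}$; since a Cohen-Macaulay ring whose top local cohomology is (a shift of) the graded Matlis dual of the ring is Gorenstein, $R_*$ is Gorenstein, and reading Hilbert series from this isomorphism produces the equation $p(1/s) = (-1)^r s^{r-a} p(s)$. In the almost Cohen-Macaulay case only $h_{r-1}$ and $h_r$ are nonzero, a single differential $d_r$ suffices to collapse the spectral sequence, and the abutment $\Sigma^a R_*^{\vee}$ sits in a short exact sequence whose extreme terms are the cokernel and a shift of the kernel of $d_r$. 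Setting $q(s)$ to be the appropriate shift of the Hilbert series of $H^{r-1}_{\fm}(R_*)$, local duality on $R_*$ in its almost Cohen-Macaulay form supplies the symmetry $q(1/s) = (-1)^{r-1} s^{a-r+1} q(s)$; reading Hilbert series off the exact sequence, and noticing that the image of $d_r$ contributes to both the kernel and the cokernel, produces $p(1/s) - (-1)^r s^{r-a} p(s) = (-1)^{r-1}(1+s)q(s)$, with the factor $(1+s)$ tracking the contribution of the image of $d_r$ to both terms of the filtration.

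For the final assertion one localises at a prime $\pp \subset R_*$. A height-zero prime gives a zero-dimensional local ring, which is automatically Cohen-Macaulay; Gorenstein duality for $R$ implies that $\Ext^*_{R_*}(k,R_*)$ is concentrated in a single degree at $\fm$, and standard Bass number considerations transfer this through local duality on $R_*$ to force $(R_*)_\pp$ to be Gorenstein. A height-one prime gives a one-dimensional localisation of Cohen-Macaulay defect at most one, to which the almost Cohen-Macaulay analysis applies after localisation and yields almost Gorensteinness. The main obstacle is the careful bookkeeping of bidegrees and suspension shifts needed to pin down $q(s)$ precisely in the almost Cohen-Macaulay case and to match the factor $(1+s)$ to the shift in the filtration of $\Sigma^a R_*^{\vee}$; a secondary difficulty is transferring the $\fm$-local Gorenstein duality to non-maximal primes, which must proceed via a local duality argument on $R_*$ itself rather than through a naive localisation of the spectral sequence.
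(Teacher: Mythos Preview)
Your overall strategy matches the paper's: use the local cohomology spectral sequence of Lemma~\ref{lem:loccohss}, read off Hilbert series, and exploit the collapse in low Cohen--Macaulay defect. The paper's sketch immediately preceding the corollary does exactly this: in the Cohen--Macaulay case the collapse gives $H^r_{\fm}(R_*)\cong \Sigma^{a+r}R_*^{\vee}$ (hence Gorenstein and the first functional equation), and in defect~$1$ the collapse produces the short exact sequence
\[
0 \lra H^r_{\fm}(R_*) \lra \Sigma^{a+r} R_*^{\vee} \lra \Sigma H^{r-1}_{\fm}(R_*) \lra 0.
\]

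There is, however, a concrete error in your almost Cohen--Macaulay argument. You say ``a single differential $d_r$ suffices to collapse the spectral sequence'' and attribute the factor $(1+s)$ to the image of $d_r$ appearing twice. In fact there is \emph{no} differential: the only nonzero columns are at cohomological degrees $r-1$ and $r$, which are adjacent, while every $d_r$ for $r\geq 2$ moves at least two columns. The spectral sequence degenerates at $E_2$ for degree reasons, and the short exact sequence above comes purely from the two-step filtration on the abutment. The suspension $\Sigma$ on $H^{r-1}_{\fm}$ is the filtration shift, and \emph{that} is the source of the factor $(1+s)$ after you take Hilbert series and rearrange.

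A second, smaller gap: your derivation of the functional equation for $q$ appeals to ``local duality on $R_*$ in its almost Cohen--Macaulay form'', but this is close to assuming what you want to prove (the almost Gorenstein property is the conclusion, not an input). The argument in \cite{ringlct} instead extracts the symmetry of $q$ from the short exact sequence itself together with Matlis duality, using that $H^{r-1}_{\fm}(R_*)$ has finite length. Your final paragraph on codimension $0$ and $1$ is along the right lines, though the paper simply invokes local duality rather than the more elaborate Bass-number transfer you outline.
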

 
\section{Ascent, descent and arithmetic of shifts}
\label{sec:Gorascdesc}

Very commonly we have a a map $\theta: S\lra R$ of ring spectra, and
we wish to relate properties of the two rings. With language  from the
geometric counterpart, a theorem stating that
if $R$ is Gorenstein then $S$ is Gorenstein is called a {\em descent
theorem} and a  theorem stating that
if $S$ is Gorenstein then $R$ is Gorenstein is called an {\em ascent
  theorem}. Typically the hypotheses are either about relative
properties of $\theta$ or in terms of the cofibre of $\theta$.
 
\subsection{Relatively Gorenstein maps}

We say that $\theta$ is {\em relatively Gorenstein} of shift $a$ if 
$$\Hom_S(R,S)\simeq \Sigma^a R, $$
and then write $a=\shift(R|S)$. This is quite a strong version of the
condition, since we have asked for a single untwisted ineger
suspension. 

We make the  elementary observation that for any ring map $\theta : S\lra R$ 
$$\Hom_R(k,\Hom_S(R,S))\simeq \Hom_R(R\tensor_S k, S)\simeq 
\Hom_S(k,S).$$
Thus we conclude that if $S\lra R$ is relatively Gorenstein then $R$ is
Gorenstein  if and only if  $S$ is Gorenstein, and in that case 
$$\shift (S)=\shift (R)+\shift (R|S).$$

\subsection{Two chromatic examples}
There are a number of examples where known facts amount to proving
that a map of ring spectra is relatively Gorenstein, and the resulting
descent theorems prove things of great interest. In these examples one 
shows that $\theta$ is relatively Gorenstein and $R$ is
c-Gorenstein; we then reach  the interesting conclusion that $S$ is Gorenstein.

\begin{example}
\label{eg:relGor} {\em (Complexification in $K$-theory)}
(i)  Periodic complex $K$-theory is represented by the ring spectrum
$KU$ and periodic real $K$-theory by $KO$. The coefficient ring of complex
$K$-theory is $KU_*=\Z [v,v^{-1}]$ where $v$ is of degree
$2$. Complexification gives a ring map 
$$S=KO\lra KU=R$$
and we observe that it is relatively Gorenstein. Indeed, Wood's
theorem states $KO\sm \C P^2 \simeq \Sigma^2 KU$. Since 
$$\C P^2=S^2\cup_{\eta}e^4$$
we see that there is a cofibre sequence
$$\Sigma KO\stackrel{\eta} \lra KO \lra KU $$
of $KO$-modules.  It follows by applying $\Hom_{KO}(\cdot, KO)$ that
$$\Hom_{KO}(KU,KO)\simeq \Sigma^{-2}KU$$
so that $KO\lra KU$ is relatively Gorenstein of shift $-2$.

(ii) We may take connective covers to obtain a ring map
$$S=ko\lra ku=R. $$
Now $ku_*=\Z [v]$ and killing homotopy groups gives a ring map 
$$ku\lra \Z$$
which is evidently c-Gorenstein (and hence Gorenstein) of shift $-3$. The connective version of Wood's
theorem gives a cofibre sequence
$$\Sigma ko\stackrel{\eta} \lra ko \lra ku $$
of $ko$-modules.  It follows by applying $\Hom_{ko}(\cdot, ko)$ that
$$\Hom_{ko}(ku,ko)\simeq \Sigma^{-2}ku$$
so that $ko\lra ku$ is also relatively Gorenstein of shift $-2$. Hence
we deduce that $ko \lra \Z$ is Gorenstein of shift $-5$. We note that
the ring $ko_*=\Z [\eta_1, \alpha_4, \beta_8]/(\eta^3, \eta \alpha,
\alpha^2=4\beta, 2\eta)$ is fairly complicated, and it is easy to check that $ko\lra \Z$ is not
c-Gorenstein. 

In this discussion we have taken $k=\Z$, which is not a field. If the
reader prefers, we can instead work over the field $\Fp$ and  use the fact that for any prime $p$ the map
$ku\lra \Fp$ is c-Gorenstein of shift $-4$. Since $ko\lra ku$ is
relatively Gorenstein of shift $-2$ we conclude $ko\lra \Fp$ is
Gorenstein of shift $-6$. \qqed
\end{example}

\begin{example} {\em (Topological modular forms)} Precisely similar statements hold for the ring spectrum $tmf$ of
topological modular forms at various primes. This uses results of
Hopkins-Mahowald  as proved by Matthew \cite{AM} (see
\cite{THHGor} for a slightly expanded discussion). 

(3) Localized at the prime 3, there is a map $tmf \lra tmf_0(2)$ to the ring
spectrum of topological modular forms with the indicated level structure. The
counterpart of Wood's theorem is the fact that  $tmf_0(2)\simeq tmf \sm (S^0\cup_{\alpha_1} e^{4}
\cup_{\alpha_1} e^{8})$, so that 
$$\Hom_{tmf}(tmf_0(2),tmf )\simeq \Sigma^{-8} tmf_0(2), $$
and the map is relatively Gorenstein of shift $-8$.
Since $tmf_0(2)_*=\Z_{(3)}[c_2, c_4] $ (where $|c_i|=2i$) we see that
$tmf_0(2)\lra \Z_{(3)}$ is c-Gorenstein (and hence Gorenstein) of shift
$-14$.  Hence we deduce by Gorenstein descent  that $tmf\lra \Z_{(3)} $ is
Gorenstein of shift $-22$. 

(2) Localized at the prime 2, there is a map $tmf \lra tmf_1(3)$ to
the ring spectrum of topological modular forms with the indicated level structure. The
 Here $tmf_1(3)$ is
a form of $BP\langle 2\rangle$ and  $tmf_1(3)\simeq tmf \sm DA(1)$ so that 
$$\Hom_{tmf}(tmf_1(3),tmf )\simeq \Sigma^{-12} tmf_1(3), $$
and the map is relatively Gorenstein of shift $-12$.
Since $tmf_1(3)_*= \Z_{(2)}[\alpha_1, \alpha_3]$ (where
$|\alpha_i|=2i$) we see that $tmf_1(3)\lra \Z_{(2)}$ is c-Gorenstein
(and hence Gorenstein) of shift
$-10$.  Hence we deduce by Gorenstein descent  that $tmf \lra \Z_{(2)}$ is
Gorenstein of shift $-22$. \qqed
\end{example}

Abstracting this slightly, we have 
$$\bbS \lra S \stackrel{\theta}\lra R \lra k$$
and these examples were all very special in that there was an equivalence 
$$R\simeq S\tensor_{\bbS} L$$
for a self-dual finite complex $L$. This means in turn that the
cofibre of $\theta : S \lra R$ 
$$R\tensor_S k\simeq k\tensor_{\bbS}L, $$
with homotopy $H_*(L;k)$ a Poincar\'e duality algebra. In the next
subsection we show that this weaker condition is often sufficient to give an ascent
theorem. 

\subsection{Gorenstein Ascent}

We suppose that $S\lra R \lra Q$ is a cofibre sequence of commutative
algebras with a map to $k$, and we now consider the Gorenstein ascent
question.  When does the fact that $S$ is Gorenstein imply that $R$ is
Gorenstein? It is natural to assume that $Q$ is Gorenstein, but it is
known this is not generally sufficient.

In effect the Gorenstein Ascent theorem will state that under suitable
hypotheses  (see Section \ref{subsec:Gorascent}) there is an equivalence
$$\Hom_R(k,R)\simeq \Hom_{Q}(k, \Hom_S(k,S)\tensor_k Q). $$
When this holds, it follows that if $S$ and $Q$ are Gorenstein, so is
$R$ and 
$$\shift (R)=\shift (S)+\shift (Q).$$

\subsection{Arithmetic of shifts}
We summarize the behaviour of Gorenstein shifts  in the ideal
situation when ascent and descent both hold. If all rings and maps are Gorenstein of the indicated
shifts
$$\stackrel{s} S \stackrel{\mu}\lra 
\stackrel{r} R \stackrel{\lambda}\lra 
\stackrel{q} Q$$
then $r=s+q, \mu =-q$ and $\lambda=s$

\subsection{When does Gorenstein ascent hold?}
\label{subsec:Gorascent}

The core of our results about ascent come  from \cite{DGI}. Indeed, 
the proof of \cite[8.6]{DGI} gives a sufficient condition for Gorenstein
ascent in the commutative context. 

\begin{lemma}
\label{lem:Gorascent}
If $S$ and $R$ are commutative and the natural map $\nu:
\Hom_S(k,S)\tensor_S R\lra  \Hom_S(k,R)$ 
is an equivalence then 
$$\Hom_R(k,R)\simeq \Hom_{Q}(k, Hom_S(k,S)\tensor_k Q). $$
In this case, if  $S$ and $Q$
are Gorenstein, so is $R$, and the shifts add up: $\shift (R)=\shift
(S)+\shift (Q)$. \qqed
\end{lemma}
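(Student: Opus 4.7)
The plan is to compute $\Hom_R(k,R)$ by peeling off the map $R\to Q$ via a change-of-rings adjunction and then pushing the resulting inner Hom back across $S\to R$, exploiting the cofibre identification $Q=R\tensor_S k$ at the key step. The displayed formula will be a concatenation of four elementary equivalences, and the Gorenstein conclusion is then a formal substitution.

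Concretely I would proceed as follows. First, since $k$ is a $Q$-module and $Q$ is an $R$-algebra, the change-of-rings adjunction gives
$$\Hom_R(k,R)\simeq \Hom_Q(k,\Hom_R(Q,R)).$$
Second, using $Q=R\tensor_S k$ and the tensor--Hom adjunction for $S\to R$,
$$\Hom_R(Q,R)=\Hom_R(R\tensor_S k,R)\simeq \Hom_S(k,R).$$
Third, by hypothesis the natural map $\nu$ is an equivalence, so
$$\Hom_S(k,R)\simeq \Hom_S(k,S)\tensor_S R.$$
Fourth, since $k$ is a commutative $S$-algebra, $\Hom_S(k,S)$ carries a canonical $k$-module structure (through the source copy of $k$), and the standard associativity of tensor products yields
$$\Hom_S(k,S)\tensor_S R\simeq \Hom_S(k,S)\tensor_k(k\tensor_S R)=\Hom_S(k,S)\tensor_k Q.$$
Chaining these four equivalences produces the formula in the statement.

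For the Gorenstein part, if $\Hom_S(k,S)\simeq \Sigma^s k$ the formula collapses to $\Hom_R(k,R)\simeq \Sigma^s\Hom_Q(k,Q)$, and if further $\Hom_Q(k,Q)\simeq \Sigma^q k$ this becomes $\Sigma^{s+q}k$, so $R$ is Gorenstein of shift $\shift(S)+\shift(Q)$. The point needing the most care is the fourth step: in the ring spectrum setting one must verify that the $k$-module structure on $\Hom_S(k,S)$ is genuinely compatible with the derived tensor product over $S$, so that the associativity invoked is legitimate at the derived level. Commutativity of $S$ and $R$ is used throughout to ensure all tensor products and Homs live in a symmetric monoidal setting where these manipulations make sense and all module actions appear on the appropriate side.
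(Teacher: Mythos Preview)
Your argument is correct and is precisely the chain of adjunctions that underlies \cite[8.6]{DGI}, which is all the paper invokes here (the lemma is stated with a bare citation and no proof). The only point to watch, which you already flag, is that in step four the $S$-action on $\Hom_S(k,S)$ really does factor through $k$: since $S$ is commutative the action through the target $S$ agrees with the action through the source $k$, and the latter visibly factors through $S\to k$, so the associativity $\Hom_S(k,S)\tensor_S R\simeq \Hom_S(k,S)\tensor_k(k\tensor_S R)$ is legitimate.
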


Now that we have a sufficient condition for Gorenstein ascent, we want
to identify cases in which  it is satisfied. The map 
$$\nu: \Hom_S(k,S)\tensor_S M\lra  \Hom_S(k,M), $$
is clearly an equivalence when $M=R$ and hence for any module finitely
built from $R$. This shows that the hypotheses are satisfied 
when $R$ is small  over $S$ (or equivalently, when $Q$ is finitely
built from $k$) so that ascent holds in this case. This is already a very
useful result. 

\begin{example}
\label{eg:fibration}
If we have a fibration $F\lra E\lra B$ of
spaces to which the Eilenberg-Moore theorem applies in the sense that 
$C^*(F)=C^*(E)\tensor_{C^*(B)}k$, then if $B$ is Gorenstein and $F$ is
an orientable manifold, it follows from Gorenstein Ascent that $E$ is Gorenstein.
\end{example}

There are other important cases where $\nu$ is han equivalence. Indeed,
we can exploit the fact that the hypothesis on $\nu$ in Lemma \ref{lem:Gorascent}
only depends on $R$ as a {\em module} over $S$ to show that $\nu$ is
an equivalence  when  $R$ is suitably approximated as an inverse limit.
This is a central ingredient in proving Gorenstein duality for many
topological Hochschild homology spectra \cite{THHGor}.

\subsection{Local duality}

To start with, we clarify terminology.  The Gorenstein duality
property we have been discussing is a rare and special
thing. On  the other hand, local duality (as in the title of this
subsection) is a tool available very generally. Local duality is based on Noether
normalization,  which means that  every well behaved local ring $R$ is finite as a module over a
Gorenstein ring $S$. Local duality is the property inherited by $R$ as
a consequence of the existence of $S$.

Accordingly, we assume for the rest of this section that we are given a map 
 $S\lra R$ so that $R$ is a small $S$-module. Notice that this means
 that for an $R$-module $M$, its $k$-cellularization as an $R$-module 
is its $k$-cellularization as an $S$-module and similarly for  
$k$-completions. This is reflected in our notation.  We also note that 
$R^{\vee}=\Hom_S(R, S^{\vee})$ so that the Matlis lifts are related by
coextension of scalars.

Traditionally, local duality is thought of as saying that Matlis dual
of local cohomology (embodied in $\Hom_R(\cell_kR, R^{\vee})\simeq
(R^{\vee})_k^{\wedge}$) is isomorphic to a completed Ext group (embodied by
$\Hom_S(R,S)_k^{\wedge}$). Directly translating this into our context we reach
a more inscrutable definition. 

\begin{defn} 
\label{defn:localduality}
We say that $S\lra R$ has {\em local duality of shift $b$}  if there is an equivalence of $R$-modules
$$\Hom_S(R,S)_k^{\wedge}=\Sigma^b ( R^{\vee})_k^{\wedge}.$$
\end{defn}

We now have two properties the map $S\lra R$ may or may not have: the
relative Gorenstein property and local duality. We also have
Gorenstein duality for $S\lra k$ and for $R\lra k$. It is valuable to
disentangle the relationships
between them.

In fact we have three complete $R$-modules.

\begin{itemize}
\item $R_k^{\wedge}$
\item $(R^{\vee})_k^{\wedge}$
\item $\Hom_S(R,S)_k^{\wedge}$
\end{itemize}
The equivalence of each of the possible pairs has a name.
In the following diagram, the
label ``relGor'' means ``$k$-completion of relatively Gorenstein'', ``locD'' means
``local duality'',  and ``GorD'' means ``Gorenstein duality'';  
the superscripts indicate the suspension necessary to get from the
tail of the arrow to the head of the arrow.

$$\diagram
&\Hom_S(R,S)_k^{\wedge} \ar@{->}[dl]_{\mathrm{relGor}^c}
\ar@{->}[dr]^{\mathrm{locD}^b} & & S_k^{\wedge} \ar@{->}[dr]^{S \mathrm{GorD}^b} &\\
R_k^{\wedge}\ar@{->}[rr]_{R \mathrm{GorD}^a}&&(R^{\vee})_k^{\wedge}&&(S^{\vee})_k^{\wedge}
\enddiagram$$
This makes it clear that any two of relGor, locD and $R$GorD implies
the third, and that $a+c=b$. 

It remains to observe that locD follows from $S$GorD by coextension of
scalars from $S$-modules to $R$-modules.

\begin{lemma}
If $S$ has Gorenstein duality of shift $b$ then 
$S\lra R$ has local duality of shift $b$.
\end{lemma}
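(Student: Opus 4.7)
The plan is to apply $\Hom_S(R,-)$ to the Gorenstein duality equivalence $\cell_kS\simeq\Sigma^bS^\vee$ for $S$, then $k$-complete, and match the resulting two sides with the sides of Definition~\ref{defn:localduality}.

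The $S^\vee$ side is immediate: since $R^\vee=\Hom_S(R,S^\vee)$ by the definition recorded just before the statement, applying $\Hom_S(R,-)$ to $\Sigma^bS^\vee$ and $k$-completing produces $\Sigma^b(R^\vee)_k^\wedge$. On the other side I would need to establish
\[
\Hom_S(R,\cell_kS)_k^\wedge\simeq \Hom_S(R,S)_k^\wedge,
\]
which by applying $\Lk$ to $\Hom_S(R,-)$ of the cofibre sequence $F\to\cell_kS\to S$ reduces to showing $\Lk\Hom_S(R,F)\simeq 0$, where $F$ is the fibre of the cellularization map. The map $\cell_kS\to S$ is by construction a $\Hom_S(k,-)$-equivalence, so $\Hom_S(k,F)\simeq 0$, and the tensor-hom adjunction transfers this vanishing across $R$:
\[
\Hom_S(k,\Hom_S(R,F))\simeq \Hom_S(R,\Hom_S(k,F))\simeq 0.
\]
The standard cellular argument (the class of $S$-modules $T$ with $\Hom_S(T,\Hom_S(R,F))\simeq 0$ is triangulated, closed under coproducts and retracts, and contains $k$) then shows it contains $\cell_kS$, giving $\Lk\Hom_S(R,F)=\Hom_S(\cell_kS,\Hom_S(R,F))\simeq 0$ as required.

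I do not anticipate any substantial obstacle: the entire argument is a routine manipulation with the Morita adjunctions of Section~\ref{sec:Morita} combined with the cellular vanishing above, which is of course the standard expression in the proxy-small framework of the idempotency relation $\Lk\cell_k\simeq\Lk$.
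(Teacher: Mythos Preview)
Your proof is correct. The paper's route is slightly different in organization: it starts from the \emph{completed} form of Gorenstein duality, $S_k^{\wedge}\simeq\Sigma^b(S^{\vee})_k^{\wedge}$, applies $\Hom_S(R,\cdot)$, and then invokes a general lemma (Lemma~\ref{lem:compcores}) saying $\Hom_S(R,N_k^{\wedge})\simeq\Hom_S(R,N)_k^{\wedge}$ for any $S$-module $N$, whose proof is a one-line adjunction using $\cell_kR\simeq R\tensor_S\cell_kS$. You instead start from the cellular form $\cell_kS\simeq\Sigma^bS^{\vee}$, apply $\Hom_S(R,\cdot)$, complete, and then verify the special case $\Hom_S(R,\cell_kS)_k^{\wedge}\simeq\Hom_S(R,S)_k^{\wedge}$ by a direct cellular-vanishing argument. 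The two are equivalent once unwound: your vanishing $\Lk\Hom_S(R,F)\simeq 0$ is exactly the content of $\Lk\Gk\simeq\Lk$ pushed through $\Hom_S(R,\cdot)$, which is what the paper's lemma encodes. The paper's formulation has the advantage of isolating a reusable general statement; yours has the advantage of being self-contained.
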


\begin{proof}
We apply $\Hom_S(R,\cdot)$ to the equivalence
$S_k^{\wedge}\simeq \Sigma^b (S^{\vee})_k^{\wedge}$
and then use the following lemma to move the completions to the outside.
\end{proof}

\begin{lemma}
\label{lem:compcores}
For any $S$-module $N$, we have an equivalence
$$\Hom_S(R,  N_{k}^{\wedge})\simeq \Hom_S(R, N)_k^{\wedge}$$
\end{lemma}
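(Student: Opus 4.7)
The plan is to invoke the Morita-theoretic description of completion and reduce the statement to symmetry of the tensor product. Under the running proxy-smallness assumption, cellularization of $S$-modules is smashing by Lemma \ref{cellsmashing}, so for any $S$-module $M$ one has $\Gamma_k M \simeq \Gamma_k S \otimes_S M$; taking the right adjoint of this smashing functor then identifies the $k$-completion of an $S$-module $N$ as
\[
N_k^\wedge \;\simeq\; \Hom_S(\Gamma_k S,\, N),
\]
exactly as explained in Subsection \ref{sec:Rmodfunctors}.

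With this formula in hand, I would compute both sides and observe that each reduces to the same iterated Hom via the Hom-tensor adjunction. On the left,
\[
\Hom_S(R,\, N_k^\wedge) \;\simeq\; \Hom_S(R,\, \Hom_S(\Gamma_k S, N)) \;\simeq\; \Hom_S(R \otimes_S \Gamma_k S,\, N).
\]
On the right, the object $\Hom_S(R,N)$ is naturally an $R$-module, and since $R$ is small over $S$ its $k$-completion coincides whether computed over $R$ or over $S$ (as noted in the paragraph preceding Definition \ref{defn:localduality}), so
\[
\Hom_S(R,N)_k^\wedge \;\simeq\; \Hom_S(\Gamma_k S,\, \Hom_S(R,N)) \;\simeq\; \Hom_S(\Gamma_k S \otimes_S R,\, N).
\]
Commutativity of the derived tensor product $\otimes_S$ then matches the two expressions.

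There is essentially no obstacle: once completion has been recognized as $\Hom_S(\Gamma_k S, -)$, the statement becomes a formal consequence of the symmetry of $\otimes_S$ combined with Hom-tensor adjunction. The only point that demands a moment's care is the convention for the $k$-completion appearing on the right-hand side, and that is why I would explicitly invoke the coincidence of $R$-module and $S$-module completions for the small $S$-module $R$.
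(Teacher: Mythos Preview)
Your proof is correct and follows essentially the same route as the paper. The paper's one-line argument writes the single chain
\[
\Hom_S(R,\Hom_S(\cell_kS,N))\simeq \Hom_S(R\tensor_S\cell_kS,N)\simeq \Hom_R(\cell_kR,\Hom_S(R,N)),
\]
using Hom--tensor adjunction twice and the identification $R\tensor_S\cell_kS\simeq\cell_kR$; you compute the two sides separately and match them by symmetry of $\tensor_S$, which amounts to the same thing.
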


\begin{proof} 
This is a formality:
$$\Hom_S(R, \Hom_S(\cell_kS, N))
\simeq  \Hom_S(R \tensor_S\cell_kS, N)) \simeq
\Hom_R(\cell_kR, \Hom_S(R,N))$$
\end{proof}

This approach can be used to show that Gorenstein duality localizes
\cite{kappaI}.

\section{Morita invariance of the Gorenstein condition} 
\label{sec:MoritaGorenstein}

We show that the Gorenstein condition is Morita invariant 
in many useful cases, provided $R$ is a $k$-algebra. This allows us to 
deduce striking consequences from well-known examples of Gorenstein rings. 
For instance we can deduce the local cohomology theorem for finite $p$-groups 
from the fact that $kG$ is a Frobenius algebra.

\begin{thm}
\label{REGorenstein}
Suppose $R$ is a $k$-algebra, and that  $\cE$ and $R$ are Matlis reflexive.
Then
$$\HomE (k , \cE )\simeq \HomR (k,R),$$
and hence
$$\cE \mbox{ is Gorenstein } \Longleftrightarrow  R \mbox{ is Gorenstein. }$$ 
\end{thm}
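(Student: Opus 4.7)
My plan is to prove the identity
$$\Hom_{\cE}(k, \cE) \simeq \Hom_R(k, R)$$
by a Hom-swap argument built on the bimodule structure of $k$, and then deduce the Gorenstein equivalence by unpacking the definition on the two sides.

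First I observe that $k$ is naturally a left $R$-module (via the augmentation $R\lra k$) and a left $\cE$-module (via the evaluation map $\cE \tensor_k k = \Hom_R(k,k) \tensor_k k \lra k$), and that these two actions commute. Consequently $k$ is a module over the derived $k$-algebra $R \tensor_k \cE$. Iterating the tensor--Hom adjunction for the two module structures, I get a natural equivalence
$$\Hom_{\cE}(k, \Hom_R(k,k)) \simeq \Hom_R(k, \Hom_{\cE}(k,k)),$$
both sides computing the bimodule endomorphisms of $k$ in the two possible orders.

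The left-hand side is by definition $\Hom_{\cE}(k, \cE)$. The right-hand side is $\Hom_R(k, \hat R)$, where $\hat R = \Hom_{\cE}(k,k)$ is the double centralizer introduced in Section~\ref{sec:Morita}. Matlis reflexivity of $R$ implies that the double-centralizer map $\kappa\colon R \lra \hat R$ is an equivalence, so this rewrites as $\Hom_R(k, R)$, completing the main identification. The Gorenstein consequence is then immediate: $\Hom_R(k,R) \simeq \Sigma^a k$ iff $\Hom_{\cE}(k,\cE) \simeq \Sigma^a k$, so $R$ and $\cE$ are simultaneously Gorenstein, with the same shift.

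The main obstacle is making the Hom-swap rigorous in the stable derived setting. In plain algebra it is a straightforward consequence of the associativity of bimodule Homs, but for ring spectra one must realise $k$ as an honest $R \tensor_k \cE$-module rather than a pair of module structures agreeing up to homotopy, and then show that the derived Homs commute without introducing completion or cellularization discrepancies. This is precisely where the combined Matlis reflexivity hypothesis is needed: reflexivity of $R$ supplies the double-centralizer equivalence $R \simeq \hat R$ used above, while reflexivity of $\cE$ supplies the symmetric identification on the $\cE$-side needed to validate the bimodule adjunction machinery.
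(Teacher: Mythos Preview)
There is a genuine gap: you have conflated two distinct completeness conditions. In this paper ``Matlis reflexive'' means that the natural map to the double \emph{Matlis} dual is an equivalence, i.e.\ $R \simeq (R^{\vee})^{\vee}$ with $R^{\vee} = \cell_k \Hom_k(R,k)$, and likewise for $\cE$. What you actually invoke is dc-completeness, the condition $R \simeq \hat R = \Hom_{\cE}(k,k)$ from Section~\ref{sec:Morita}. These are not the same hypothesis. Your Hom-swap (granting the bimodule structure) delivers $\Hom_{\cE}(k,\cE)\simeq \Hom_R(k,\hat R)$, and closing the remaining gap to $\Hom_R(k,R)$ needs dc-completeness, which is not what is assumed. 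Your claim that reflexivity of $\cE$ ``validates the bimodule adjunction machinery'' is also not a correct use of that hypothesis: realising $k$ as an honest $R\tensor_k\cE$-module is a foundational rectification issue unrelated to Matlis reflexivity, and your argument never uses reflexivity of $\cE$ in any substantive way.

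The paper's proof avoids both the bimodule rectification problem and any appeal to $\hat R$ by working entirely through Matlis duals and the Morita functors. Using Lemma~\ref{proxycell} one identifies $R^{\vee}\simeq k^{\vee}\tensor_{\cE}k$ and $\cE^{\vee}\simeq k\tensor_R k^{\vee}$; associativity of the triple product $k\tensor_R k^{\vee}\tensor_{\cE}k$ then gives the key identity $\cE^{\vee}\tensor_{\cE}k \simeq k\tensor_R R^{\vee}$. The calculation is
\[
\Hom_{\cE}(k,\cE)\simeq \Hom_{\cE}(k,(\cE^{\vee})^{\vee})\simeq \Hom_k(\cE^{\vee}\tensor_{\cE}k,\,k)\simeq \Hom_k(k\tensor_R R^{\vee},\,k)\simeq \Hom_R(k,(R^{\vee})^{\vee})\simeq \Hom_R(k,R),
\]
with Matlis reflexivity of $\cE$ and of $R$ supplying exactly the first and last steps. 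That is where both halves of the stated hypothesis are genuinely consumed.
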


\begin{proof} We use the fact that (in the notation of Section \ref{sec:Morita})
$E(R^{\vee})=\kvee $, so that by Lemma \ref{proxycell} we have 
$$R^{\vee}=TE(R^{\vee})=T\kvee =\kvee \tensorE k . $$
We also note that 
$$\Hom_R(k\tensor_R k^{\vee}, k)\simeq \Hom_k(k,\Hom_R(k,k))\simeq
\cE$$ 
so that
$$\cE^{\vee}=k  \tensorR \kvee . $$ 

Next, note that   the expression $k \tensorR \kvee \tensorE k$
makes sense, where the right $\cE$-module structure on the first two factors comes from $\kvee$.  The key equality in the proof is simply the 
associativity isomorphism
$$\cE^{\vee} \tensorE k=
   k \tensorR \kvee \tensorE k=k \tensorR R^{\vee} .$$

Now we make the following calculation,

$$\begin{array}{rcl}
\HomE(k,\cE )   &\simeq&\HomE (k , (\cE^{\vee})^{\vee})\\
                &\simeq&\Homk ( \cE^{\vee}\tensorE k ,k)\\
                &\simeq&\Homk (k \tensorR \kvee \tensorE k,k)\\
                &\simeq&\Homk (k \tensorR R^{\vee}  ,k)\\
                &\simeq&\HomR (k ,(R^{\vee})^{\vee})\\
                &\simeq&\HomR (k ,R)
\end{array}$$
\end{proof}





\section{Gorenstein duality for group cohomology}
\label{sec:GorHBG}
This section describes the key example. The first sign of this duality
was in Benson-Carlson duality \cite{BC1, BC2}, which in particular
shows that the Hilbert series of the group cohomology ring $H^*(BG)$
satisfies a functional equation if it is Cohen-Macaulay or a pair of
functional equations if it is almost Cohen-Macaulay. An algebraic
construction of the local cohomology spectral sequence was given
in \cite{groupca};  this was inspired by the  topological construction using equivariant
topology in \cite{KEG}, and a proof using structured equivariant
spectra first appears in 
\cite{Lieca}. The method described here comes from \cite{DGI}. 

\subsection{$p$-groups}
If $G$ is a $p$-group and $k$ is of characteristic $p$ we note that
$BG$ is $p$-complete and therefore $\Omega (BG_p^{\wedge})\simeq G$
and the Morita pair is 
$$R=C^*(BG;k) \mbox{ and } \cE =C_*(\Omega BG)=kG.$$
Since $kG$ is a Frobenius algebra, it is Gorenstein of shift 0, and so by Morita
invariance of the Gorenstein condition (Theorem
\ref{REGorenstein}),  $C^*(BG)$ is also Gorenstein
of shift 0. 

\subsection{General finite groups}
Now if $G$ is an arbitrary finite group, we may choose a faithful
 representation  $\rho : G\lra SU(n)$ for some $n$ and consider the fibration 
$$BSU(n) \lla BG \lla SU(n)/G .$$
Since $BSU(n)$ is simply connected, the Eilenberg-Moore theorem gives a
cofibre sequence
$$C^*(BSU(n))\lra C^*(BG)\lra C^*(SU(n)/G). $$
Now note that $H^*(BSU(n))$ is polynomial on $c_2, \ldots , c_n$ and therefore $C^*(BSU(n))$
is c-Gorenstein and therefore Gorenstein (with shift $2(2+3+\cdots
+n)-(n-1)=\dim (SU(n))$).  On the other hand $SU(n)/G$
is an orientable manifold of the same dimension as $SU(n)$  and therefore $C^*(SU(n)/G)$ is
Gorenstein. By  Gorenstein Ascent (Example \ref{eg:fibration}), $C^*(BG)$ is Gorenstein of shift 0 as
required. 

\subsection{The local cohomology theorem}
As described in Lemma \ref{lem:loccohss}
 we thus obtain the local cohomology theorem 
$$H^*_{\fm}(H^*BG)\Rightarrow H_*(BG)  $$
for group cohomology.

As described in Corollary \ref{cor:functeqn}, we then obtain
functional equations in many cases. We warn that $t$ is of codegree
$1$ (unlike $s$ in the corollary, which was of degree 1). 
If $H^*(BG)$ is Cohen-Macaulay, it is Gorenstein and its 
Hilbert series satisfies 
$$p(1/t)=(-1)^rt^{r}p(t).$$

If $H^*(BG)$ is almost Cohen-Macaulay it is also almost Gorenstein, and the
Hilbert series satisfies 
$$p(1/t)-(-1)^rt^{r}p(t)=(-1)^{r-1}(1+t)q(t) \mbox{ and } q(1/t)=(-1)^{r-1}t^{-r+1}q(t).$$ 

In any case $H^*(BG)$ is Gorenstein in codimension 0 and almost
Gorenstein in codimension 1. 

We will describe a number of 2-group examples with $k$ of
characteristic $2$.

\subsection{The elementary abelian group of rank $r$}
The group of order 2 may be viewed as a subgroup of the non-zero real
numbers. It therefore acts diagonally on $n\R =\R \oplus \cdots \oplus
\R$ and freely on the unit
sphere $S(n\R)$. Hence 
$$BC_2 =S(\infty \R)/C_2=\R P^{\infty}. $$ 
 
Similarly, if $G\cong C_2\times \cdots \times C_2$  is of rank $r$ we see that 
$$BG=\R P^{\infty}\times \cdots \times \R P^{\infty}, $$
and 
$$H^*(BG)=k[x_1, \ldots , x_r].$$
This is visibly of dimension $r$ and depth
$r$ and  
$$p(t)=\frac{1}{(1-t)^r}. $$
The functional equation is easily checked, and the reader may wish to
check directly that the local cohomology of $H^*(BG)$ is $H_*(BG)$ (up
to a shift).

\subsection{The quaternion group of order 8}
If $G=Q_8$ is quaternion of order 8 we note that it acts freely on the
unit 3-sphere in the quaternion algebra $S(\bH) $, and in fact 
$H^*(S(\bH)/G)=k[x,y]/(x^3,x^2+xy+y^2,
y^3)$. It is then clear that $G$ acts freely on the contractible space
$S(\infty \bH)$ and that 
$$H^*(BG)=H^*(S(\bH )/G)[z]$$
where $z$ is of codegree 4. This is visibly of dimension 1 and depth
1 and  
$$p(t)=\frac{1+2t+2t^2+t^3}{1-t^4}. $$
The functional equation is easily checked.  It is also easy to check
directly that the local cohomology of $H^*(BG)$ is $H_*(BG)$ (up to a shift).

Although there are many other groups for which we could calculate the
cohomology it is convenient to simply refer to the invaluable Jena
database \cite{Jena}  for the cohomology of small  $p$-groups. 

\subsection{The dihedral group of order 8}
If $G=D_8$ is dihedral of order 8 
$$H^*(BG)=k[x_1,y_1,z_2]/(xy). $$
It is quick to check that this is of  dimension  2  and depth
2 and  
$$p(t)=\frac{1}{(1-t)^2}. $$
The functional equation is easily checked. It is an easy exercise to check
 directly that the local cohomology of $H^*(BG)$ is  $H_*(BG)$ (up to
 a shift).

\subsection{The semi-dihedral group of order 16}
If $G=SD_{16}$ is the semi-dihedral group of order 16 
$$H^*(BG)=k[x_1,y_1,z_3,t_4]/(xy,x^3, xz,z^2+ty^2). $$
One may check that this is of  dimension  2  and depth
1 and is included as the first example which is not
Cohen-Macaulay. Its Hilbert series is 
$$p(t)=\frac{1}{(1-t)^2(1+t^2)},  $$
and one may check that the almost Cohen-Macaulay functional equations are
satisfied with shift 0.  It is presumably a coincidence that it also
satisfies the functional equation of a Cohen-Macaulay graded ring of
dimension 2 with (homological) shift $2$.

\subsection{Group Number 7 of order 32}
The 7th group in the Small Groups library list of the 51 groups of
order 32 is the only one whose cohomology is neither Cohen-Macaulay nor 
almost Cohen-Macaulay. The ring has a minimal presentation with 8
generators and 18 relations, so it won't be recorded here. 
The important facts for us are that it is of dimension  3  and depth 1 and has Hilbert series 
$$p(t)=\frac{1-t+t^2}{(1-t)^3(1+t^2)}. $$
By coincidence this satisfies the functional
equation for a Gorenstein graded ring with shift 0. 

Taking the description of its cohomology given in \cite{Rusin} one may
easily calculate its local cohomology. In fact it has a polynomial
subgring $P=k[z_1,x_2,s_4]$ over which it is a direct sum of 5
submodules, namely $P\oplus \Sigma_3P\oplus \Sigma_4 P\oplus M\oplus N$
where 
$$M=\cok (\Sigma_4P\stackrel{\{ x, z\} }\lra \Sigma_2P\oplus \Sigma_3P)$$
and 
$$N=\Sigma_1 P/(x,z). $$
The free submodule is of depth 3, $M$ is of depth 2. Finally
$$H^1_{\fm}(H^*(BG))=H^1_{\fm}(N)$$ 
is of dimension 1 over $k$ in degrees $3, 7, 11, 15, \cdots$.
Knowing the dimensions of $H_n(BG)$ from $p(t)$ we see   that the differential
$$d_2:H^1_{\fm}(H^*(BG))\lra \Sigma^{-1}H^3_{\fm}(H^*BG))$$ 
is a monomorphism (and so non-zero in infinitely many degrees).

\subsection{Other classes of groups}
We have described the fact that $C^*(BG)$ is Gorenstein for finite
groups $G$. It is shown in \cite{DGI} that $C^*(BG)$ is Gorenstein of
shift equal to $\dim (G)$ whenever $G$ is a compact Lie group,
provided that either $k$ is of characteristic 2 or the group of components is
of odd order. In  general there is a twisting by the representation
$H^g(G)$, which  is trivial in the aforementioned cases.

If $G$ is a virtual duality group of dimension $n$ there is a form of
Gorenstein duality with shift $-n$ for $C^*(BG)$. Now there is a 
 twisting by $H^n(G;kG)$, which is usually of infinite
 dimension. The algebraic proof is given in \cite{vdca}, along with a
 topological proof for arithmetic groups using equivariant topology. 
A proof along the present lines can be given for these groups as
follows. We choose a normal subgroup $N$ of finite index in $G$ which is a
duality  group, and a contractible space $X$ on which $G$ acts with
finite isotropy. Now $N$ acts freely on $X$ so $X/N$ is a manifold
with boundary. Now let $Q=G/N$, and consider the fibration 
$$BQ\lla   BG\lla X/N,  $$
which we may obtain from the equivalence 
$$BG\simeq EQ\times_Q (X/N). $$ 
We attempt to apply Gorenstein Ascent. In the situation that $X/N$
has empty boundary (i.e., $G$ is a virtual {\em Poincar\'e} duality
group) we infer $C^*(BG)$ is Gorenstein as required.

\section{Gorenstein duality for rational spaces}
\label{sec:GorQ}
This section discusses Gorenstein duality for rational spaces.
F\'elix-Halperin-Thomas \cite{FHT} have considered  the Gorenstein
condition in depth, so the distinctive feature here is the
concentration on  Gorenstein duality as in \cite{DGI}. 

For spaces with finite dimensional cohomology $X$ is Gorenstein if and only if $H^*(X)$ is Gorenstein,
but in general there are Gorenstein spaces for which $H^*(X)$ is not
Gorenstein and we make explicit the local cohomology theorem and its
consequences for  $H^*(X)$. 

\subsection{Fundamentals}
We specialize some of the above results to $C^*(X;\Q)$, beginning with
the zero dimensional case. 

\begin{lemma}\cite[3.6]{FHT}
If $H^*(A)$ is finite dimensional then 
$A$ is  Gorenstein if and only if $H^*(A)$ is a Poincar\'e duality 
algebra. 
\end{lemma}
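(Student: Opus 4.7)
The plan is to exploit the fact that when $H^*(A)$ is finite-dimensional over $k$, the $A$-module $A$ is finitely built from $k$ (one can kill the finitely many nonzero cohomology classes in turn), so $\cellk A \simeq A$. The Matlis lift then simplifies: $A^\vee = \Hom_k(A,k)$ is itself $k$-cellular, and it satisfies the universal property $\Hom_A(T, A^\vee) \simeq \Hom_k(T, k)$ for every $T$ built from $k$; in particular $\Hom_A(k, A^\vee) \simeq k$. Both directions of the lemma then reduce to comparing an $A$-module equivalence of the form $A \simeq \Sigma^s A^\vee$ with the Poincar\'e duality pairing on $H^*(A)$.

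For the forward direction, suppose $A$ is Gorenstein of shift $s$. Because $A$ is a $k$-algebra and $k$ is an Eilenberg--MacLane spectrum, any action of $\cE = \Hom_A(k,k)$ on $k$ factors through $\pi_0(\cE) = k$ and is therefore unique; hence the orientability argument used in Proposition \ref{prop:autoGor} applies, and $A$ has Gorenstein duality: $A \simeq \cellk A \simeq \Sigma^s A^\vee$. Passing to graded homotopy and using $\pi_n(A^\vee) \cong \pi_{-n}(A)^*$ (which holds because $A$ is finite-dimensional over $k$) yields an isomorphism $H^*(A) \cong \Sigma_{-s} H^*(A)^\vee$ of graded $H^*(A)$-modules, which is exactly the Poincar\'e duality condition on $H^*(A)$ with formal dimension $-s$.

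For the reverse direction, assume $H^*(A)$ is a Poincar\'e duality algebra of formal dimension $d$ with top class $\omega \in H^d(A)$. The linear functional dual to $\omega$ lifts to a $k$-module map $\tau \colon A \to \Sigma^{-d} k$, and composing the multiplication $A \otimes_k A \to A$ with $\tau$ and taking the adjoint produces an $A$-linear map
$$\phi \colon A \lra \Sigma^{-d} A^\vee, \qquad a \longmapsto \bigl(b \mapsto \tau(ab)\bigr).$$
On homotopy $\phi$ induces the PD pairing $H^{-n}(A) \to H^{n+d}(A)^*$, which is an isomorphism by hypothesis; hence $\phi$ is a weak equivalence of $A$-modules. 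Applying $\Hom_A(k,-)$ and invoking the Matlis property then gives
$$\Hom_A(k, A) \;\simeq\; \Sigma^{-d} \Hom_A(k, A^\vee) \;\simeq\; \Sigma^{-d} k,$$
so $A$ is Gorenstein of shift $-d$.

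The harder step is the reverse direction, where the algebraic Poincar\'e duality on $H^*(A)$ must be promoted to a chain-level $A$-module equivalence $A \simeq \Sigma^{-d} A^\vee$. The key trick is that a single cohomology class --- the fundamental class --- together with the ring structure of $A$ and the $A$-linearity built into $A^\vee$ suffice to construct the candidate map $\phi$; once constructed, verifying that $\phi$ is a weak equivalence reduces to checking its effect on $H^*$, which the PD hypothesis provides.
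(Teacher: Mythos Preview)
Your proof is correct, and for the direction ``Gorenstein $\Rightarrow$ Poincar\'e duality'' it matches the paper's argument: both invoke automatic orientability to pass from the Gorenstein condition to Gorenstein duality, and then read off $H^*(A)\cong \Sigma^a H^*(A)^{\vee}$ on homotopy. The paper phrases this last step via the local cohomology spectral sequence, noting that since $H^*(A)$ is finite dimensional it is entirely $\fm$-torsion, so $\lc^*(H^*(A))=H^*(A)$ and the spectral sequence degenerates; your version bypasses the spectral sequence language by observing $\cell_kA\simeq A$ directly.

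For the direction ``Poincar\'e duality $\Rightarrow$ Gorenstein'' you take a genuinely different route. The paper simply observes that a Poincar\'e duality algebra of formal dimension $n$ is a zero-dimensional Gorenstein local ring with $a$-invariant $-n$; thus $A$ is c-Gorenstein, and the standard spectral sequence $\Ext^{*,*}_{H^*(A)}(k,H^*(A))\Rightarrow \pi_*(\Hom_A(k,A))$ collapses to give the Gorenstein condition. This is a one-line appeal to an implication already recorded earlier. Your argument instead promotes the fundamental class to a $k$-linear functional $\tau:A\to\Sigma^{-d}k$, adjoints the multiplication to produce an explicit $A$-linear map $\phi:A\to\Sigma^{-d}A^{\vee}$, and checks it is an equivalence because its effect on homotopy is the Poincar\'e pairing. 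This is more work, but it buys you something: you establish the Gorenstein \emph{duality} equivalence $A\simeq\Sigma^{-d}A^{\vee}$ directly and constructively, whereas the paper's argument only gives the Gorenstein condition $\Hom_A(k,A)\simeq\Sigma^{-d}k$ and would need a separate orientability check to upgrade to duality.
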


\begin{proof}
If $H^*(A)$ is a Poincar\'e duality algebra of formal dimension 
$n$ then it is a zero dimensional Gorenstein ring with $a$-invariant
$-n$, so $A$ is Gorenstein with shift $-n$ by the previous corollary.

Conversely, if $A$ is Gorenstein of shift $a$, we have a Gorenstein 
duality spectral sequence. Since $H^*(A)$ is finite dimensional,  it 
is all torsion. Accordingly, $\lc^*(H^*(A))=H^*(A)$, 
and the spectral sequence reads
$$H^*(A)=\Sigma^a H^*(A)^{\vee}$$
and $H^*(A)$ is a Poincar\'e duality algebra of formal dimension $-a$.
\end{proof}

By applying Gorenstein Ascent  (Example \ref{eg:fibration})
one may use these to construct other examples which are Gorenstein
but not c-Gorenstein. 

\begin{prop} \cite[4.3]{FHT}
Suppose  we have a fibration $F\lra E \lra B$ with 
$F$ finite. If $F$ and $B$ are Gorenstein with 
shifts $f$ and $b$ then $E $ is Gorenstein with 
shift $e=f+b$.\qqed 
\end{prop}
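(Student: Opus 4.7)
The plan is to realize this as a direct application of Gorenstein Ascent (Lemma \ref{lem:Gorascent} and Example \ref{eg:fibration}). First, since we are in the rational (or more generally Eilenberg--Moore) setting, the fibration $F \lra E \lra B$ produces a cofibre sequence of commutative ring spectra
$$C^*(B;k) \lra C^*(E;k) \lra C^*(F;k),$$
by the Eilenberg--Moore theorem: the natural map $C^*(E)\tensor_{C^*(B)} k \lra C^*(F)$ is an equivalence under the hypotheses in force (simple connectivity of $B$ in Case $0$, or the appropriate $p$-completeness hypotheses in Case $p$). Set $S = C^*(B)$, $R = C^*(E)$ and $Q = C^*(F)$.

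Next I would verify the hypothesis of Gorenstein Ascent, namely that the natural map
$$\nu : \Hom_S(k,S)\tensor_S R \lra \Hom_S(k,R)$$
is an equivalence. As noted after Lemma \ref{lem:Gorascent}, it suffices to check that $R$ is small over $S$, or equivalently that $Q$ is finitely built from $k$. But $F$ is finite, so $H^*(F;k) = \pi_{-*}(Q)$ is a finite dimensional $k$-vector space; since $Q$ is a $k$-module spectrum with bounded finite-dimensional homotopy, it is finitely built from $k$. This verifies the hypothesis.

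Now apply Gorenstein Ascent: since $S = C^*(B)$ is Gorenstein of shift $b$ and $Q = C^*(F)$ is Gorenstein of shift $f$, it follows that $R = C^*(E)$ is Gorenstein of shift $f + b$, as required. The main point to check carefully is the passage through Eilenberg--Moore and the ``$Q$ is finitely built from $k$'' condition; once these are in place the arithmetic of shifts in Subsection on arithmetic of shifts does the rest, and there is no further obstacle.
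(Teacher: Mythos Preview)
Your proposal is correct and matches the paper's approach exactly: the paper does not give a detailed proof but simply indicates (in the sentence preceding the proposition) that it follows ``by applying Gorenstein Ascent (Example \ref{eg:fibration})'', which is precisely what you do. Your explicit verification that finiteness of $F$ makes $Q=C^*(F)$ finitely built from $k$, hence $R$ small over $S$, is the one point the paper leaves implicit.
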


\begin{lemma}
If $X$ is a simply connected rational space with $H^*(X)$ Noetherian
then if $C^*(X;\Q)$ is Gorenstein of shift $a$, then it is automatically orientable and so 
has Gorenstein duality of shift $a$ 
\end{lemma}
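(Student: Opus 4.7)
The plan is to deduce the statement directly from Proposition \ref{prop:autoGor} applied to $R=C^*(X;\Q)$ with $k=\Q$. Recall that proposition requires $R$ to be a proxy-regular connected $k$-algebra with $\pi_*(R)$ Noetherian, $\pi_0(R)=k$, and maximal ideal $\fm$ concentrated in strictly positive (co)degrees; its conclusion is that any Gorenstein shift is automatically orientable, hence yields Gorenstein duality. So the task reduces to checking hypotheses.

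First, I would dispatch the coefficient-level conditions. Evidently $R=C^*(X;\Q)$ is a commutative $\Q$-algebra spectrum. By hypothesis $\pi_*(R)=H^*(X;\Q)$ is Noetherian, and since $X$ is simply connected we have $H^0(X;\Q)=\Q$ and $H^1(X;\Q)=0$, so the coefficient ring is connected in the relevant sense, with augmentation ideal $\fm=H^{\geq 2}(X;\Q)$ sitting in strictly positive codegrees. This is exactly the input the proof of Proposition \ref{prop:autoGor} exploits to force $\cE=\Hom_R(\Q,\Q)$ to be coconnective with $\pi_0(\cE)=\Q$, hence to act trivially on $\Q$ and so present a unique right action of $\cE$ on $k$.

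Second, I would supply proxy-regularity. This is precisely Case 0 of Corollary \ref{cor:proxysmallegs}: for a simply connected rational space with Noetherian cohomology, $\Q$ is proxy-small over $C^*(X;\Q)$, the small proxy being built from a map $X\to K(V)$ to a finite product of even Eilenberg-MacLane spaces chosen so that the induced map on cohomology is a finite Noether normalization. With all four hypotheses (proxy-regularity, connectedness, Noetherianity of $\pi_*$, and positive-degree augmentation ideal) verified, Proposition \ref{prop:autoGor} applies and gives both automatic orientability and Gorenstein duality of shift $a$.

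There is no genuine obstacle: this is a straight specialisation of the general automatic-orientability criterion to the rational cochain setting. The only point worth a moment's care is the grading check — that the hypotheses of Proposition \ref{prop:autoGor} really do line up with the coconnective nature of cochain spectra on a simply connected space — but this is immediate from $H^0(X;\Q)=\Q$ together with $H^1(X;\Q)=0$, and the orientability conclusion then follows formally because there is a unique action of $\pi_0(\cE)=\Q$ on $\Q$.
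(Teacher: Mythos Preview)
Your approach is essentially the paper's: invoke Corollary~\ref{cor:proxysmallegs} for proxy-smallness, then use the argument of Proposition~\ref{prop:autoGor} to get a unique $\cE$-action on $\Q$ and hence automatic orientability.

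One small wobble worth flagging: Proposition~\ref{prop:autoGor} is stated for a \emph{connected} $R$ with maximal ideal in \emph{positive} degrees, whereas $C^*(X;\Q)$ is coconnective with $\fm$ in negative degrees (positive codegrees). So the hypotheses do not literally line up, and the paper is careful to say ``apply the same argument as for Proposition~\ref{prop:autoGor}'' rather than cite it directly. Relatedly, in this dual situation $\cE\simeq C_*(\Omega X;\Q)$ is \emph{connective} (not coconnective as you wrote), with $\pi_0(\cE)=H_0(\Omega X;\Q)=\Q$ since $X$ is simply connected. Either way the conclusion is the same --- the action factors through $\pi_0(\cE)=\Q$, which acts uniquely on $\Q$ --- so your argument goes through once this grading reversal is acknowledged.
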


\begin{proof}
We see that $\Q$ is proxy-small by Corollary
\ref{cor:proxysmallegs}. We may now apply  the same argument as for Proposition
\ref{prop:autoGor} to see there is a unique action of $C_*(\Omega X;
\Q)$ on $\Q$ and deduce automatic Gorenstein duality. 
\end{proof}

\subsection{Examples}
The basic results of the previous subsection  allow us to construct innumerable examples. For example
any finite Postnikov system is Gorenstein \cite[3.4]{FHT}, 
so that in particular any sci space is Gorenstein.
A simple example will illustrate the duality.

\begin{example} 
\label{eg:hGornotGor}
We construct a rational space $X$ in a fibration 
$$S^3\times S^3 \lra X \lra \CP^{\infty} \times \CP^{\infty},  $$
so that $X$ is Gorenstein. We will calculate $H^*(X)$ and
observe that it is not Gorenstein.

Let $V$ be a graded vector space with two generators
$u,v$ in degree 2, and let $W$ be a graded vector space with two 
generators in degree 4. The two 4-dimensional cohomology classes
 $u^2, uv$ in $H^*(KV)=\Q[u,v]$ define a map $KV\lra KW$, 
and we let $X$ be the fibre, so we have a fibration
$$S^3\times S^3 \lra X \lra KV$$
as required.  By \cite{DGI}, this is Gorenstein
with shift $-4$ (being the sum of the shift (viz $-6$) of 
$S^3\times S^3$ and the shift (viz 2) of $KV$). 

It is amusing to calculate the cohomology ring of
$X$. It is $\Q[u,v,p]/(u^2,uv,up, p^2)$ where $u,v$ and $p$ 
have degrees $2,2$ and $5$. The dimensions of its graded components
are $1,0,2,0,1,1,1,1,1,\ldots$  (i.e., its Hilbert series 
is $p_X(t)=(1+t^5)/(1-t^2)  +t^2$, where $t$ is of codegree 1). 

In calculating local cohomology it is useful to note that 
${\frak{m}}=\sqrt{(v)}$.
The local cohomology is $\lc^0(H^*(X))=\Sigma_2\Q$ in degree 0
(so that $H^*(X)$ is not Cohen-Macaulay)
and as a $\Q[v]$-module $\lc^1(H^*(X))$ is 
$\Q[v]^{\vee}\tensor (\Sigma^{-3} \Q \oplus \Sigma^2 \Q)$.
Since there is no higher local cohomology the local cohomology spectral 
sequence necessarily collapses, and the resulting
exact sequence
$$0\lra \lc^1(H^*(X))\lra \Sigma^{-4}H^*(A)^{\vee} \lra \Sigma^{-2}\Q\lra 0$$
is consistent. 

Since the Cohen-Macaulay defect here is 1, we have a pair of functional 
equations
$$p_X(1/t)-(-t)t^{-4}p_X(t)=(1+t)\delta(t)$$
and 
$$\delta (1/t)=t^4\delta (t).$$
Indeed, the first equation  gives $\delta (t)=t^{-2}$, which is indeed
the Hilbert series of $\lc^0(H^*(X))^{\vee}$, and it obviously satisfies
the second equation.
\end{example}

\section{The ubiquity of Gorenstein ring spectra}
\label{sec:Goregs}
We have  considered  a number of examples, in some cases giving rather
complete proofs. This short section points out that the
ideas can be extended rather easily. In fact there is a sense in which
all the examples come from the first three by using Gorenstein Ascent
and Morita invariance of the Gorenstein condition. 

\begin{itemize}
\item Gorenstein commutative local rings $R$ (shift
  $-\mathrm{Krulldim}(R)$).  
\item $C^*(M)$ for closed manifolds $M$ (orientable if $M$ is orientable; shift $-\dim (M)$).
\item $C_*(G)$ for compact Lie groups $G$  (orientable if $G$ acts
  trivially on $H_g(G)$; shift $\dim (G)$).
\item $C^*(BG)$ for compact Lie groups $G$ (orientable if $G$ acts
 trivially on $H_g(G)$; shift $\dim (G)$).
\item $C^*(BG)$ for virtual duality groups $G$ (dualizing module $H_*(G;kG)$; shift $-\mbox{formal-dim} (G)$).
\item $C^*(EG\times_GM)$ Borel construction on a $G$-manifold $M^m$ for $G^g$
  compact Lie (orientable if $G$, $M$ and the action are; shift
  $g-m$). An amusing instance of this arises from toric geometry; if
  $K$ is a simplicial complex one may construct the so-called  moment
  angle  complex $Z_K$ which has an action of a torus $G$ whose Borel cohomology
  is the Stanley-Reisner ring $k[K]$. If $K$ is a simplicial sphere
  $Z_K$ is a manifold and $k[K]$ is Gorenstein.  See \cite{BP} for
  definitions and proofs.  
\item Chromatic examples. The coefficient ring $R$ (or ring spectrum) is contravariant in the
  geometric object making it like cochains, but the ring spectrum $R$
  is often  connective (unlike cochains). With this caveat, the examples are  parallel
  to the $C^*(BG)$ example with the compact Lie group $G$ replaced by
  an algebraic group $\G$, so that $R$ plays the role of the ring spectrum of functions on
  $\G$. The Morita counterpart of $R=\cO_{\G}$ would be the
   ring of operations $C_*(\G; k)=\Hom_R(k,k)$.   
\item Hochschild homology. In  \cite{THHGor} several classes of
  examples are identified where the  Hochschild homology with field
  coefficients inherits Gorenstein properties.  The context is that
we are given maps $S\lra R\lra k$ of commutative rings, so that we can define the
Hochschild homology 
$$HH_{\bullet}(R|S;k)=R\tensor_{R\tensor_SR}k, $$
and it is a ring spectrum with a map to $k$. The results then say that 
(under substantial additional hypotheses)
 if $R$ is Gorenstein of shift $a$ and $HH_{\bullet}(k|S;k)$  
is  Gorenstein of shift $b$ then $HH_{\bullet}(R|S;k)$ is Gorenstein of shift
$b-a$. The proof comes  out of Gorenstein Ascent, with the hypotheses designed to
allow the application of Lemma \ref{lem:Gorascent}. 
\end{itemize}

\appendix
\renewcommand{\thesubsection}{\Roman{subsection}}

\section{Algebraic definitions: Local and \v{C}ech cohomology and homology} \label{sec:loccoh}
Related surveys are given in  \cite{Handbook1, Handbook2}.
The material in this section is based on \cite{G, GML,Tateca}. Background in commutative
algebra can be found in \cite{Matsumura,BrunsHerzog}.

\subsection{The functors}
Suppose to begin with that $\AR $ is a commutative Noetherian ring and that 
$I=(\alpha_1, \dots , \alpha_n)$ is an ideal in $\AR $. 
We shall be concerned especially with two naturally occurring functors on 
$\AR $-modules: the $I$-power torsion functor and the $I$-adic completion functor.

The $I$-power torsion functor $\Gamma_I$ is defined by 
$$M \longmapsto \Gamma_I (M) = \{ x \in M \mid I^kx=0
\mbox{ for  } k >> 0  \}.$$
We say that $M$ is an {\em $I$-power torsion module} if $M=\Gamma_I M$. 
It is easy to check that the functor $\Gamma_I$ is left exact.

The $I$-adic completion functor is defined by 
$$M \longmapsto M_I^{\sm } = \ilim _k M/I^kM.$$
The Artin-Rees lemma implies that $I$-adic completion is exact on finitely generated modules, but it is 
neither right nor left exact in general.

\subsection{The stable Koszul complex.}
We begin with a sequence $\sset{\alpha_1,\ldots,\alpha_n}$ of elements of $\AR $
and define various chain complexes. In Subsection \ref{subsec:invariance}
we explain why the chain complexes only depend on the radical of the ideal
$I=(\alpha_1, \ldots, \alpha_n)$ generated by the sequence, in Subsection 
\ref{subsec:lochomcoh} we define associated homology groups,  
and in Subsection \ref{subsec:derived} we give conceptual 
interpretations of this homology under Noetherian hypotheses. 

We begin with a single element $\alpha \in \AR $, and an integer $s \geq 0$, and
define the $s$th unstable Koszul complex by 
$$K_s^{\bullet}(\alpha )=(\alpha^s :\AR  \darrow  \AR  )$$
where the non-zero modules are in cohomological degrees 0 and 1. 
These complexes form a direct system as $s$ varies,
$$\begin{array}{rcccc}
K_1^{\bullet}(\alpha)&=&
\left( \right. \;\; \AR & \stackrel{\alpha}\lra &\AR \left. \; \; \right)\\
\downarrow \hspace*{4ex}&&=\downarrow && \downarrow \alpha\\
K_2^{\bullet}(\alpha)&=&
\left( \right.\;\;\AR & \stackrel{\alpha^2}\lra &\AR \left. \; \; \right)\\
\downarrow \hspace*{4ex}&&=\downarrow && \downarrow \alpha\\
K_3^{\bullet}(\alpha)&=&
\left( \right. \; \; \AR & \stackrel{\alpha^3}\lra &\AR \left. \; \;\right)\\
\downarrow \hspace*{4ex} &&=\downarrow && \downarrow \alpha
\end{array}$$
 and the direct limit is  the {\em flat stable} Koszul complex 
$$\FK{\alpha} = \left( \AR  \darrow  \AR [1/\alpha ] \right). $$ 

When defining local cohomology, it is usual to use the complex $\FK{\alpha}$ 
of flat modules. However, we shall need a complex of projective $\AR$-modules 
to define the dual local homology modules. Accordingly, we take a 
particularly convenient projective approximation $\PPK{\alpha}$ to $\FK{\alpha}$. 
Instead of taking the direct limit of the $K^{\bullet}_s(\alpha )$, we take their 
homotopy direct limit. This makes the translation to the  topological context
straightforward. More concretely, our model for $\PPK{\alpha}$ is
displayed as the upper row in the homology isomorphism 
$$\diagram PK^{\bullet}(\alpha)  \dto&=& \left( \right. 
\AR  \oplus \AR [x] \dto_{\langle 1,0 \rangle} 
\rrto^<(0.35){\langle 1,\alpha x-1\rangle} & & 
\AR [x] \dto^{g} \left.\right)\\ 
K^{\bullet}_{\infty}(\alpha) &=& \left( \right. \hspace*{3ex}\AR \hspace*{3ex} \rrto & &\AR [1/\alpha] \left. \right), \\ 
\enddiagram$$ 
where $g(x^i)=1/\alpha^i$. Like $\FK{\alpha}$, this choice of $\PPK{\alpha}$ is 
non-zero only in cohomological degrees $0$ and $1$.

The stable Koszul cochain complex for a sequence $\AL = (\alpha_1, \dots , \alpha_n ) $ 
is obtained by tensoring together the complexes for the  elements, so that 
$$ \FK{\AL} =  \FK{\alpha_1} \otimes_R \dots \otimes_R \FK{\alpha_n},$$ 
and similarly for the projective complex $\PPK{\AL}$.

\subsection{Invariance statements.}
\label{subsec:invariance}

We list some basic properties of the stable Koszul complex, leaving proofs to the reader. 

\begin{lemma}\label{Ksupp}
If $\beta$ is in the ideal $I=(\alpha_1 , \alpha_2, \ldots , \alpha_n )$, then 
$\FK{\AL }[1/\beta ]$ is exact.
\end{lemma}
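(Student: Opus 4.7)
The plan is to reduce to showing that the cohomology $H^*(\FK{\AL})$ is killed by some power of each generator $\alpha_i$, and then use a multinomial argument to conclude that it is also killed by powers of any $\beta \in I$.

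First I would handle the single-variable case. For a single $\alpha$, the complex $\FK{\alpha} = (R \to R[1/\alpha])$ becomes, after inverting $\alpha$, the complex $(R[1/\alpha] \to R[1/\alpha])$ whose differential is the canonical map of $R$-algebras $R[1/\alpha] \to R[1/\alpha][1/\alpha]$. Since $\alpha$ is already a unit on both sides, this map is the identity, so $\FK{\alpha}[1/\alpha]$ is exact.

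Next I would pass to the sequence $\AL = (\alpha_1,\ldots,\alpha_n)$. Since localization commutes with tensor products and each $\FK{\alpha_j}$ is a complex of flat $R$-modules, we get
\[
\FK{\AL}[1/\alpha_i] \;=\; \FK{\alpha_1}[1/\alpha_i] \otimes_R \cdots \otimes_R \FK{\alpha_i}[1/\alpha_i] \otimes_R \cdots \otimes_R \FK{\alpha_n}[1/\alpha_i].
\]
The $i$th factor is exact by Step 1, and tensoring an exact complex of flats against any complex of flats preserves exactness. Hence $\FK{\AL}[1/\alpha_i]$ is exact for every $i$, so $H^k(\FK{\AL})[1/\alpha_i] = 0$ for all $k$ and all $i$: every cohomology class is annihilated by some power of every $\alpha_i$.

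The main step (and the place where one must be slightly careful) is converting this individual $\alpha_i$-power torsion into $\beta$-power torsion. Fix a class $x \in H^k(\FK{\AL})$ and pick $m_i$ with $\alpha_i^{m_i} x = 0$. Write $\beta = \sum_i r_i \alpha_i$ and set $M = (m_1-1) + \cdots + (m_n-1) + 1$. Expanding $\beta^M$ by the multinomial theorem, each monomial $\alpha_1^{e_1}\cdots \alpha_n^{e_n}$ appearing has $\sum e_i = M$, and by pigeonhole $e_i \geq m_i$ for at least one $i$; such a monomial kills $x$. Thus $\beta^M x = 0$, so $H^k(\FK{\AL})[1/\beta] = 0$.

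Finally, localization is exact, so $H^k(\FK{\AL}[1/\beta]) = H^k(\FK{\AL})[1/\beta] = 0$ for all $k$, proving that $\FK{\AL}[1/\beta]$ is acyclic. There is no serious obstacle; the one point worth stating carefully is the multinomial/pigeonhole argument, which is precisely what makes the conclusion depend only on the radical $\sqrt{I}$ rather than on the chosen generating sequence.
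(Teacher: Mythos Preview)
The paper does not actually supply a proof of this lemma: it explicitly says ``We list some basic properties of the stable Koszul complex, leaving proofs to the reader.'' Your argument is a correct and standard way to fill in the details. One small streamlining: in Step~2 you can observe that $\FK{\alpha_i}[1/\alpha_i]$ is not merely exact but \emph{contractible} (it is the identity map on $R[1/\alpha_i]$), and the tensor product of a contractible complex with any complex is contractible---this avoids having to invoke flatness at that stage.
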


Note that, by construction, we have an augmentation map
$$\epz : \FK{\AL } \darrow  \AR .$$
Using this to compare different stable Koszul complexes and Lemma
\ref{Ksupp} to see when they are equivalences we deduce an important
invariance statement. 

\begin{cor} \label{indepgens}
Up to quasi-isomorphism, the complex $\FK{\AL}$ depends only on the radical of the ideal $I$. 
\end{cor}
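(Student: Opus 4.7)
The plan is to establish the corollary by passing through the concatenated sequence. Given two sequences $\AL = (\alpha_1,\ldots,\alpha_n)$ and $\AL' = (\alpha'_1,\ldots,\alpha'_m)$ whose generated ideals $I$ and $I'$ satisfy $\sqrt{I} = \sqrt{I'}$, I will show that both $\FK{\AL}$ and $\FK{\AL'}$ are quasi-isomorphic to the stable Koszul complex
$$\FK{(\AL,\AL')} \;=\; \FK{\AL} \otimes_R \FK{\AL'}$$
of the concatenated sequence. The central ingredient is a one-generator reduction using the augmentation.

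First, I record the key piece of homological algebra. For a single element $\alpha$, the augmentation $\epsilon_\alpha\colon \FK{\alpha} \to R$ fits into a short exact sequence of complexes
$$0 \to R[1/\alpha][-1] \to \FK{\alpha} \to R \to 0,$$
where $R[1/\alpha][-1]$ denotes $R[1/\alpha]$ placed in cohomological degree $1$. Since every module in $\FK{\AL}$ is flat over $R$ (being either $R$ itself or a localization of $R$), tensoring with $\FK{\AL}$ preserves exactness and yields
$$0 \to \FK{\AL}[1/\alpha][-1] \to \FK{\AL} \otimes_R \FK{\alpha} \to \FK{\AL} \to 0.$$
If $\alpha \in \sqrt{I}$, then $\alpha^k \in I$ for some $k \ge 1$; since $R[1/\alpha] = R[1/\alpha^k]$ Lemma \ref{Ksupp} gives that $\FK{\AL}[1/\alpha]$ is acyclic, so $\FK{\AL} \otimes_R \FK{\alpha} \to \FK{\AL}$ is a quasi-isomorphism.

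Now I iterate this one generator at a time along $\AL'$. At the $j$-th stage I tensor the accumulated complex $\FK{(\AL,\alpha'_1,\ldots,\alpha'_{j-1})}$ with $\FK{\alpha'_j}$, and I need $\alpha'_j$ to lie in the radical of the ideal generated by $\alpha_1, \ldots, \alpha_n, \alpha'_1, \ldots, \alpha'_{j-1}$; this holds because $\alpha'_j \in \sqrt{I}$ is already contained in the radical of any ideal containing $I$. The result is a quasi-isomorphism $\FK{(\AL,\AL')} \stackrel{\simeq}{\lra} \FK{\AL}$. By the symmetric argument (each $\alpha_i$ lies in $\sqrt{I'}$) we also obtain $\FK{(\AL,\AL')} \stackrel{\simeq}{\lra} \FK{\AL'}$, and composition yields the required quasi-isomorphism $\FK{\AL} \simeq \FK{\AL'}$.

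No step looks genuinely hard: the only care required is in justifying that the tensor products here compute the derived tensor product, which is automatic from the flatness of every module in the stable Koszul complex, and in verifying that Lemma \ref{Ksupp} applies with $\alpha \in \sqrt{I}$ rather than $\alpha \in I$ (handled by the equality of localizations $R[1/\alpha] = R[1/\alpha^k]$). The rest is bookkeeping over the iteration.
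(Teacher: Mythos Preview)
Your proof is correct and follows essentially the same approach the paper has in mind: using the augmentation $\epsilon$ to compare stable Koszul complexes and invoking Lemma~\ref{Ksupp} to see that the extra factors contribute nothing. The paper only sketches this (``using this to compare different stable Koszul complexes and Lemma~\ref{Ksupp} to see when they are equivalences''); your concatenation argument and one-generator reduction are exactly the standard way to make that sketch precise.
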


In view of Corollary \ref{indepgens} it is reasonable to  write $\FK{I}$ for $\FK{\AL}$. 
 Since $\PPK{\AL}$ is a projective approximation to $\FK{\AL}$, 
it too depends only on the radical of $I$. We also write 
$K_s^{\bullet}(I) = K_s^{\bullet}(\alpha_1) \otimes \cdots \otimes K_s^{\bullet}(\alpha_n )$, but 
this is an abuse of notation since even its homology groups do depend on the choice of generators. 

\subsection{Local homology and cohomology.}
\label{subsec:lochomcoh}
The local cohomology and homology of an $\AR $-module $M$ are then defined by 
$$ H^*_I(\AR ;M) = H^*(\PPK{I} \otimes M) $$ and
$$   H_*^I(\AR ;M) = H_*(\Hom (\PPK{I} , M). $$
Note that we could equally well use the flat stable Koszul complex in the definition of local 
cohomology, as is more usual. Lemma \ref{Ksupp}
 shows that $H^*_I(M)[1/\beta]=0$ if $\beta \in I$, so $H^*_I(M)$ 
is an $I$-power torsion module and  supported over $I$.

It is immediate from the definitions that local cohomology and local homology are related by 
 a third quadrant universal coefficient spectral sequence 
\begin{equation} 
E_2^{s,t}= \mbox{Ext}^s_\AR (H_I^{-t}(\AR ),M) \Longrightarrow  H_{-t-s}^I(\AR ;M), 
\end{equation} 
with differentials $d_r:E_r^{s,t} \darrow  E_r^{s+r,t-r+1}$. 

\subsection{Derived functors}
\label{subsec:derived}

We gave our definitions in terms of specific chain complexes. 
The meaning of the definitions appears in the following two 
theorems.

\begin{thm}[Grothendieck \cite{G}]
\label{loccohR}
If $\AR $ is Noetherian, then the local cohomology
groups calculate the right derived functors of the left exact functor 
$M \longmapsto \Gamma_I (M)$. In symbols, 
$$\hspace{6mm} H_I^n(\AR ;M) = (R^n\Gamma_I)(M). \qqed$$ 
\end{thm}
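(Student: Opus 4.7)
The plan is to verify the three conditions that characterise right derived functors of a left exact additive functor: the assignment $M \mapsto H^*_I(R;M)$ should be a cohomological $\delta$-functor, should agree with $\Gamma_I$ in degree zero, and should be effaceable, i.e., vanish in positive degrees on injective modules. Once these are in place, the universal property of right derived functors supplies a canonical natural isomorphism $R^n\Gamma_I(M) \cong H^n_I(R;M)$.

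The first two conditions are essentially formal. For the identification in degree zero, $H^0$ of $\PPK{\AL} \otimes M$ is the kernel of the map $M \to \bigoplus_i M[1/\alpha_i]$, which consists of those $x \in M$ annihilated by some power of each $\alpha_i$; since $R$ is Noetherian and $I$ is generated by $\alpha_1,\ldots,\alpha_n$, a multinomial expansion shows this coincides with $\Gamma_I(M)$. The $\delta$-functor property then follows because $\PPK{\AL}$ is a complex of projective (hence flat) $R$-modules, so tensoring with it converts a short exact sequence of modules into a short exact sequence of cochain complexes, and the snake lemma delivers the long exact sequences in $H^*_I$.

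The substantive step, and the main obstacle, is to show $H^n_I(R;E) = 0$ for $n > 0$ whenever $E$ is an injective $R$-module. The key input is the Matlis structure theorem: over a Noetherian ring every injective module decomposes as $E = \bigoplus_{\mathfrak{p}} E(R/\mathfrak{p})^{(\lambda_{\mathfrak{p}})}$. Since $\PPK{\AL}\otimes(-)$ and the passage to homology both commute with arbitrary direct sums, I may reduce to the case $E = E(R/\mathfrak{p})$ for a single prime $\mathfrak{p}$, and then split into two cases. If $I \not\subseteq \mathfrak{p}$, pick $\alpha \in I \setminus \mathfrak{p}$; since $E(R/\mathfrak{p})$ is naturally an $R_{\mathfrak{p}}$-module, $\alpha$ acts invertibly on $E$, hence on each $H^n_I(R;E)$. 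On the other hand, Lemma \ref{Ksupp} gives $H^*_I(R;E)[1/\alpha] = 0$ (since localisation is exact and $\FK{\AL}[1/\alpha]$ is already exact), and combining these two statements forces $H^*_I(R;E) = 0$. If instead $I \subseteq \mathfrak{p}$, every element of $E(R/\mathfrak{p})$ is annihilated by some power of $\mathfrak{p}$, hence by a power of $I$, and in particular by a power of each $\alpha_i$; consequently each iterated localisation $E[1/\alpha_{i_1}\cdots\alpha_{i_s}]$ with $s \geq 1$ vanishes, so the flat stable Koszul complex $\FK{\AL}\otimes E$ collapses to $E$ concentrated in degree zero. Since $\PPK{\AL} \to \FK{\AL}$ is a quasi-isomorphism between complexes of flat modules, tensoring with $E$ preserves this, yielding $H^0_I(R;E) = \Gamma_I(E) = E$ and $H^n_I(R;E) = 0$ for $n > 0$.

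With these three ingredients in hand, the canonical comparison transformation $R^n\Gamma_I(-) \to H^n_I(R;-)$ produced by the $\delta$-functor structure is an isomorphism on injective modules, and hence an isomorphism on all modules by dimension-shifting along an injective resolution.
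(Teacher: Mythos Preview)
The paper does not actually prove this theorem: the statement carries a \qqed\ and is simply attributed to Grothendieck \cite{G}, with the subsequent paragraph only recording the corollary $(R^n\Gamma_I)(M)\cong \colim_r \Ext^n_R(R/I^r,M)$. So there is no ``paper's own proof'' to compare against.

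Your argument is a correct and standard proof. The three-step verification (identifying $H^0_I$ with $\Gamma_I$, exhibiting the $\delta$-functor structure via flatness of $\PPK{\AL}$, and checking effaceability on injectives) is exactly the textbook route. The case split on indecomposable injectives is clean: for $I\not\subseteq\mathfrak p$ you are invoking precisely the consequence of Lemma~\ref{Ksupp} that the paper itself records just after the definition, namely that $H^*_I(M)$ is $I$-power torsion; for $I\subseteq\mathfrak p$ the collapse of the flat Koszul complex is immediate. The one step you pass over quickly---that the quasi-isomorphism $\PPK{\AL}\to\FK{\AL}$ remains a quasi-isomorphism after tensoring with an arbitrary module---is justified because both are bounded complexes of flat $R$-modules (so both compute the derived tensor product), and in any case the paper explicitly remarks that either complex may be used to define local cohomology.
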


This result may be used to give an explicit expression for 
local cohomology in familiar terms.  Indeed, 
since $\Gamma_I (M)=\colim_r\, \Hom(\AR /I^r,M)$, and the right
derived functors of the right-hand side are obvious, we
have
$$(R^n\Gamma_I)(M)\iso \colim_r\,\Ext_\AR^n(\AR /I^r,M).$$
The description in terms of the stable Koszul complex is usually 
more practical.

\begin{thm}[Greenlees-May \cite{GML}]
\label{lochomL}
If $\AR $ is Noetherian, then the local homology
groups calculate the left derived functors of the (not usually right exact) 
$I$-adic completion functor $M \longmapsto M_I^{\sm }$. Writing $L_n^I$ for
the left derived functors of $I$-adic completion, this gives 
$$ \hspace{6mm} H_n^I(\AR ;M) = L_n^I(M). \qqed$$ 
\end{thm}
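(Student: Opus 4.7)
The plan is to identify both $H_*^I(R;-)$ and $L_*^I(-)$ as homological $\delta$-functors on $R$-modules and then match them. Since $\PPK{I}$ is a bounded complex of projectives, $\Hom(\PPK{I},-)$ carries short exact sequences of modules to short exact sequences of complexes, so $H_*^I(R;-)$ is a $\delta$-functor, matching $L_*^I$ formally. I would proceed in three steps: treat the single-element case directly, bootstrap to a general sequence via the tensor factorization of $\PPK{\AL}$, and finally identify iterated derived single-element completion with derived $I$-adic completion using the Noetherian hypothesis.

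First, for a single element $\alpha$, I would exploit that $\PPK{\alpha}$ is by construction the mapping telescope (homotopy colimit) of the unstable Koszul complexes $K_s^{\bullet}(\alpha)=(R\xrightarrow{\alpha^s} R)$. Applying $\Hom(-,M)$ converts this to a homotopy inverse limit, so
$$\Hom(\PPK{\alpha},M)\simeq \holim_s \Hom(K_s^{\bullet}(\alpha),M),$$
with each term the two-stage complex $M\xleftarrow{\alpha^s}M$, whose (co)homology is $M/\alpha^s M$ and the $\alpha^s$-torsion of $M$. The resulting Milnor short exact sequences identify $H_0^{\alpha}(R;M)$ as an extension of $\lim_s M/\alpha^s M=M_{\alpha}^{\wedge}$ by a $\lim^1$ of the torsion tower, and $H_1^{\alpha}(R;M)$ by the companion expression. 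A parallel computation of the left derived functors of $\alpha$-adic completion (via a flat resolution and the snake lemma applied to the tower of quotient complexes) yields exactly the same $\lim/\lim^1$ presentations, giving $H_n^{\alpha}(R;M)=L_n^{\alpha}(M)$.

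Second, for a sequence $\AL=(\alpha_1,\dots,\alpha_n)$ I would use the factorization $\PPK{\AL}=\PPK{\alpha_1}\otimes_R\cdots\otimes_R\PPK{\alpha_n}$. Since each $\PPK{\alpha_i}$ is a complex of projectives, the Hom adjunction gives
$$\Hom(\PPK{\AL},M)\simeq \Hom(\PPK{\alpha_1},\Hom(\PPK{\alpha_2},\cdots\Hom(\PPK{\alpha_n},M)\cdots)),$$
and induction using Step 1 shows the right-hand side computes the $n$-fold iterated total left derived $\alpha_i$-adic completion applied to $M$. Independence of the choice of generators (Corollary \ref{indepgens}) together with the equality $L^I_*=L^{\sqrt{I}}_*$ (which holds because, $R$ being Noetherian, $I$ and $\sqrt{I}$ define the same linear topology by Artin-Rees) makes this intrinsic.

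The main obstacle is the final identification: showing that the iterated derived single-element completion equals the total derived functor $L_*^I$ of $I$-adic completion. This is where the Noetherian hypothesis is essential. The naive approach via a Grothendieck spectral sequence fails unless one can control acyclicity, because $\alpha_i$-adic completion is not exact and need not carry projectives to objects on which the next completion is computed correctly. The way around this, which I expect to be the technical heart of the argument, is to verify on the generating class of flat (or, better, suitably large free) modules $F$ that both functors vanish in positive degrees and give $F_I^{\wedge}$ in degree zero, using Artin-Rees to compare the $I$-adic filtration with the iterated $\alpha_i$-filtrations on $F$; then the agreement of $\delta$-functors in degree $0$ together with effaceability propagates the identification $H_n^I(R;M)=L_n^I(M)$ to all $M$.
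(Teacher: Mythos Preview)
The paper does not prove this theorem; it is stated with a citation to \cite{GML} and closed immediately with a \qed. So there is no in-paper proof to compare against.

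Your outline is essentially the strategy of the original Greenlees--May paper, and it is sound: reduce to showing agreement on free modules via the double-complex/hypercohomology comparison, then use that both sides are $\delta$-functors. You also correctly locate the technical heart. One sharpening is worth making. The key vanishing $H_n^I(R;F)=0$ for $n>0$ and $H_0^I(R;F)=F_I^{\wedge}$ for free $F$ is not really an Artin--Rees statement about filtrations; rather, since $\PPK{I}\simeq\hocolim_s K_s^{\bullet}(\AL)$ with each $K_s^{\bullet}(\AL)$ perfect, one has $\Hom(\PPK{I},F)\simeq\holim_s\Hom(K_s^{\bullet}(\AL),F)$, and the Milnor sequences reduce the question to showing that the towers $\{H_i(K_s^{\bullet}(\AL);F)\}_s$ are pro-zero for $i>0$. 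For $F$ free this follows from the case $F=R$, and for $R$ it is exactly the \emph{proregularity} condition (in Schenzel's sense \cite{Schenzel}): in a Noetherian ring the ascending chains of annihilator ideals $\mathrm{ann}(\alpha_j^s)$ stabilize, which forces the transition maps in these towers to eventually be zero. With that in hand, your effaceability argument goes through; the iterated-versus-direct completion comparison you worry about is then automatic, since both sides have been identified with $F_I^{\wedge}$ directly rather than via an intermediate iterated completion.
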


The conclusions of Theorem \ref{loccohR} and  \ref{lochomL} 
are true under much weaker hypotheses \cite{GML,Schenzel}.

\subsection{The shape of local cohomology.}
One is used to the idea that $I$-adic completion is often exact, so that $L^I_0$ is the 
most significant of the left derived functors. However, it is the top non-vanishing 
right derived functor of $\Gamma_I$ that is the 
most significant. Some idea of the shape of these derived functors can be obtained 
from the following result. Observe that the complex  $\PPK{\AL }$ is non-zero only in 
cohomological degrees between $0$ and $n$, so that local homology and 
cohomology are zero above dimension $n$. A result of Grothendieck's usually gives 
a much better bound. We write $\dim (R)$ for the Krull dimension of $R$ and
 $\depth_I(M)$ for the $I$-depth of a module $M$
(the length of the longest $M$-regular sequence from $I$).

\begin{thm}[Grothendieck \cite{Go}]
\label{loccohvan}
If $\AR $ is Noetherian of Krull dimension $d$, then 
$$H^i_I(M)=0 \ \  \tand \ \ H_i^I(M)=0 \ \ \text{if} \ i>d.$$ 
If $e=\depth_I(M)$ then 
$$H^i_I(M)=0 \ \text{if} \ i< e.$$
If $\AR $ is Noetherian, $M$ is finitely generated, and $IM\neq M$, then 
$$H^e_I(M)\neq 0. \qqed$$ 
\end{thm}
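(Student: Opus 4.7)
The three claims are proved by distinct techniques: the upper vanishing rests on a Koszul-length count after replacing the generators, the lower vanishing is an induction on depth using the long exact sequence, and the non-vanishing is a parallel induction that eventually matches the two ends.

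\emph{Vanishing in degrees $i>d$.} Both $H^i_I(\AR;M)$ and $H_i^I(\AR;M)$ are defined from $\PPK{I}=\PPK{\alpha_1,\ldots,\alpha_n}$, and by Corollary~\ref{indepgens} they depend on $I$ only through $\rad(I)$. Thus it suffices to exhibit, for any ideal $I$ in a Noetherian ring of Krull dimension $d$, a sequence $\beta_1,\ldots,\beta_d\in \AR$ with $\rad(\beta_1,\ldots,\beta_d)=\rad(I)$; then $\PPK{\beta_1,\ldots,\beta_d}$ is concentrated in cohomological degrees $0,\ldots,d$, and the vanishing of $H^i_I$ and $H_i^I$ for $i>d$ is immediate from the definitions. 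The construction of such a sequence proceeds by iterated prime avoidance: choose $\beta_1\in I$ not lying in any minimal prime of $\AR$, then $\beta_2\in I$ not lying in any minimal prime of $\AR/(\beta_1)$, and so on; after $d$ steps $V(\beta_1,\ldots,\beta_d)\cap V(I)$ coincides with $V(I)$ (since $\dim\AR/(\beta_1,\ldots,\beta_d) \le 0$ forces containment of radicals). This reduction is the one subtle ingredient; the rest of the upper-vanishing argument is formal.

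\emph{Vanishing in degrees $i<e$ where $e=\depth_I(M)$.} Induct on $e$. The case $e=0$ is vacuous. For $e\ge 1$ choose an $M$-regular $x\in I$ and feed the short exact sequence
$$0\to M\stackrel{x}\to M\to M/xM\to 0$$
into the long exact local cohomology sequence. Since $\depth_I(M/xM)=e-1$, the inductive hypothesis gives $H^i_I(M/xM)=0$ for $i<e-1$, and hence multiplication by $x$ is injective on $H^i_I(M)$ for $i<e$. But by Lemma~\ref{Ksupp} every element of $H^i_I(M)$ is $I$-power torsion, so in particular $x$-power torsion; the only $x$-torsion module on which multiplication by $x$ is injective is the zero module. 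Hence $H^i_I(M)=0$ for $i<e$.

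\emph{Non-vanishing of $H^e_I(M)$ when $M$ is finitely generated and $IM\neq M$.} Induct on $e$ again. If $e=0$, then $I$ consists of zero-divisors on $M$, so $I$ lies in the union of the (finitely many) associated primes of $M$; by prime avoidance $I\subseteq\mathfrak p$ for some $\mathfrak p=\mathrm{ann}(m)$ with $0\neq m\in M$, and then $m\in\Gamma_I(M)=H^0_I(M)$. For the inductive step with $e\ge 1$, pick $x\in I$ which is $M$-regular; then $\depth_I(M/xM)=e-1$ and $I(M/xM)\ne M/xM$ (because $xM\subseteq IM$, so $I(M/xM)=M/xM$ would give $IM=M$). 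The inductive hypothesis yields $H^{e-1}_I(M/xM)\ne 0$. Combined with the depth vanishing just proved, the long exact sequence
$$H^{e-1}_I(M)=0\to H^{e-1}_I(M/xM)\to H^e_I(M)\stackrel{x}\to H^e_I(M)$$
produces an injection $H^{e-1}_I(M/xM)\hookrightarrow H^e_I(M)$, so $H^e_I(M)\ne 0$.

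\emph{Main obstacle.} The only step that is not a bare application of the Koszul complex and a long exact sequence is the radical-reduction lemma used for the upper vanishing; everything else is an induction driven by a single regular element. One may alternatively deduce the local homology half of the upper vanishing from the universal coefficient spectral sequence already recorded in the paper, which avoids redoing any chain-level work.
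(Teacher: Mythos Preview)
Your arguments for the depth vanishing and the non-vanishing of $H^e_I(M)$ are correct and match exactly what the paper indicates: an elementary induction on the length of an $M$-regular sequence in $I$, using the long exact sequence together with the fact that local cohomology is $I$-power torsion. This is the standard proof (the paper points to \cite[16.8]{Matsumura}).

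The upper vanishing argument, however, has a real gap. Your strategy is to replace $I$ up to radical by an ideal generated by $d=\dim R$ elements and then read off vanishing from the length of the Koszul complex. The trouble is that the radical-reduction lemma you invoke is \emph{false} for general Noetherian rings. A clean counterexample: let $R$ be a Dedekind domain (so $d=1$) whose class group contains an element of infinite order, for instance the affine coordinate ring of an elliptic curve over an algebraically closed field, and let $\fp$ be a maximal ideal representing such a class. Then $\rad(a)=\fp$ would force $(a)=\fp^e$ for some $e$, which is impossible since no power of $\fp$ is principal. Thus $\fp$ cannot be generated up to radical by a single element, and your reduction fails already for $d=1$. (The general bound is $d+1$ elements, not $d$; this is the Kronecker/Eisenbud--Evans circle of ideas, and $d+1$ only yields vanishing above degree $d+1$.) Your prime-avoidance sketch also breaks down before this: the choice of $\beta_i\in I$ outside all minimal primes of $R/(\beta_1,\ldots,\beta_{i-1})$ can simply be impossible, and the sentence ``$V(\beta_1,\ldots,\beta_d)\cap V(I)=V(I)$'' is a tautology that does not give the needed equality of radicals.

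The paper's route is genuinely different: it appeals to Grothendieck's original argument, which identifies local cohomology with sheaf cohomology with support and then proceeds by induction on the Krull dimension with a reduction to the irreducible case. That argument avoids any claim about the number of set-theoretic generators of $I$. Your observation that the local \emph{homology} vanishing then follows from the universal coefficient spectral sequence is fine once the local cohomology vanishing is established.
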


Grothendieck's proof of  vanishing begins by noting that
 local cohomology is sheaf cohomology with support. It
then proceeds by induction on the Krull dimension and 
reduction to the irreducible case. The statement about depth is 
elementary, and proved  by induction on the length of the $I$-sequence
(see \cite[16.8]{Matsumura}).

The Universal Coefficient Theorem gives a useful consequence for local 
homology.

\begin{cor}\label{cmcase} 
If $\AR $ is Noetherian and $\mathrm{depth}_I(\AR )=\mathrm{dim}(\AR )=d$, then 
$$\hspace{6mm} L^I_sM=\mathrm{Ext}^{d-s}_\AR (H^d_I(\AR ), M). \qqed $$ 
\end{cor}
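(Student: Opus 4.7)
The plan is to use the universal coefficient spectral sequence
$$E_2^{s,t} = \Ext^s_R(H_I^{-t}(R), M) \Longrightarrow H^I_{-t-s}(R;M)$$
recorded earlier, together with Theorem \ref{loccohvan} to see that it collapses.

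First I would invoke Theorem \ref{loccohvan}: since $\dim(R) = d$ we have $H_I^i(R) = 0$ for $i > d$, and since $\depth_I(R) = d$ we have $H_I^i(R) = 0$ for $i < d$ as well. Hence $H_I^i(R)$ is concentrated in the single cohomological degree $i = d$. This means the $E_2$-page of the spectral sequence is concentrated in the single row $t = -d$:
$$E_2^{s,-d} = \Ext^s_R(H_I^d(R), M), \qquad E_2^{s,t} = 0 \text{ for } t \neq -d.$$

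Second, because all differentials on every page must travel between distinct rows (or more precisely, the differential $d_r$ shifts $t$ by $-r+1$, which is nonzero for $r \geq 2$, so its source and target cannot both lie in the row $t = -d$), all higher differentials vanish and the spectral sequence collapses at $E_2 = E_\infty$. There are no extension problems since only one filtration quotient is nonzero in each total degree. Applying Theorem \ref{lochomL} to identify $H^I_n(R;M) = L^I_n M$, we obtain
$$L^I_{d-s} M \;=\; H^I_{d-s}(R;M) \;\cong\; \Ext^s_R(H_I^d(R), M).$$
Re-indexing via $s \mapsto d-s$ yields the claimed formula. There is no real obstacle here: the content is entirely the vanishing input from Theorem \ref{loccohvan}, after which the spectral sequence degenerates for free.
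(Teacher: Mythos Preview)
Your argument is correct and is exactly the one the paper intends: the corollary is stated immediately after the universal coefficient spectral sequence and Grothendieck's vanishing theorem, with the proof omitted because it is just the collapse you wrote out. There is nothing to add.
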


For example if $\AR =\Z$ and $I=(p)$, then $H^*_{(p)}(\Z )=H^1_{(p)}(\Z )=\Z /p^{\infty}$. 
Therefore the corollary states that $$L_0^{(p)}M=\mathrm{Ext}_{\Z}(\Z /p^{\infty} , M) \ \ \ \mbox{ and } \ \ \ 
L_1^{(p)}M=\Hom_{\Z}(\Z /p^{\infty} , M), $$
as was observed in Bousfield-Kan \cite[VI.2.1]{BK}.

\subsection{\v{C}ech homology and cohomology.}
\label{subsec:Cech}
We have motivated local cohomology in terms of $I$-power torsion, 
and it is natural to consider the difference between the torsion 
and the original module.  
In geometry this difference would be more fundamental than the torsion 
itself, and local cohomology would then arise by considering functions with support. 

To construct a good model for this difference, 
observe that $\epz: \FK{\AL } \darrow  \AR  $ is an 
isomorphism in degree zero and define the flat \v{C}ech complex $\FC{I}$ to be the complex 
$\SI(\ker\epz)$. Thus, if $i\geq 0$, then $\check{C}^i(I)=K^{i+1}(I)$. For example, if 
$I=(\alpha , \beta )$, then
$$ \FC{I}=\left( \: \AR [1/\alpha ] \oplus \AR [1/\beta ] \darrow  
\AR [1/( \alpha \beta )]\;\right) .$$ 
The differential $K^0(I)\darrow K^1(I)$ specifies a chain map $\AR \darrow \FC{I}$ whose 
fibre is exactly $\FK{I}$. Thus we have a fibre sequence 
$$ \fbox{$\FK{I}  \darrow  \AR  \darrow  \FC{I} .$}$$ 
We define the projective version $\PC{I}$ similarly, using the kernel of the composite of 
$\epz$ and the quasi-isomorphism $\PPK{I}\darrow\FK{I}$; note that $\PC{I}$ is non-zero in 
cohomological degree $-1$.

The \v{C}ech cohomology of an $\AR $-module $M$ is then defined by 
$$ \check{C}H^*_I(\AR ;M) = H^*(\FC{I} \otimes M) .$$

\subsection{\v{Cech} cohomology and \v{C}ech covers.}
To explain why $\FC{I}$ is called the \v{C}ech complex, we describe how it 
arises by using the \v{C}ech construction to calculate cohomology from a suitable open cover. 
More precisely, let $Y$ be the closed subscheme of $X=Spec(\AR )$ determined by $I$. The space 
$V(I)=\{ \wp | \wp\supset I\}$ decomposes as 
$V(I)=V(\alpha_1 ) \cap \ldots \cap V( \alpha_n )$, and there results an open cover of the 
open subscheme $X-Y$ as the union of the complements 
$X-Y_i$ of the closed subschemes $Y_i$ determined by the principal ideals $(\alpha_i)$. 
However, $X-Y_i$ is isomorphic to the affine scheme $Spec(\AR [1/\alpha_i ])$. Since affine 
schemes have no higher cohomology, 
$$H^*(Spec(\AR [1/\alpha_i ]);\tilde{M})=H^0(Spec(\AR [1/\alpha_i ]);\tilde{M})=M [1/\alpha_i ],$$
where $\tilde{M}$ is the sheaf associated to the $\AR$-module $M$. 
Thus the $E_1$ term of the Mayer-Vietoris spectral sequence for this cover 
collapses to the chain  complex $\FC{I}$, and
$$H^*(X-Y;\tilde{M})\iso \check{C}H^*_I(M).$$

\section{Spectral analogues of the algebraic definitions} 
\label{sectoploccoh}

We now transpose the algebra from Appendix \ref{sec:loccoh} into the
context of spectra.
It is convenient to note that it is routine to extend the algebra to graded rings, 
and we will use this without further comment below. 
We assume the reader is already comfortable working with ring spectra, but there is
an introduction in Sections \ref{sec:ringspectra1} and \ref{sec:constructspectra}, and one at greater length in \cite{spectra}. 

We replace the standing assumption that $\AR$ is a
 commutative $\Z$-algebra  by the assumption that it is 
a commutative $\bbS$-algebra, where $\bbS$ is the sphere 
spectrum. The category of $\AR $-modules is now   the category of 
$R$-module {\em spectra}. Since the derived category of a ring $R$ is equivalent to 
the derived category of the associated Eilenberg-MacLane spectrum \cite{Shipley}, 
the work of Appendix \ref{sec:loccoh}  can be reinterpreted in the new context. 
To emphasize the algebraic analogy, 
 we write $\tensor_R$ and $\Hom_R$ for the smash product over $R$ and function spectrum 
of $R$-maps  and $0$ for the trivial module. In particular $X\tensor_{\bbS}Y=X\sm Y$ 
and $\Hom_{\bbS}(X,Y)=F(X,Y)$.

This section is based on \cite{KEG,Tateca}.

\subsection{Koszul spectra.}
For $\alpha \in \pi_{*} R$, we define the stable 
Koszul spectrum $K_{\infty}(\alpha)$ by the fibre sequence 
$$K_{\infty}(\alpha)\longrightarrow R \longrightarrow R [1/\alpha], $$ 
where 
$R[1/ \alpha] = \hocolim (R \stackrel{\alpha}{\longrightarrow} 
R \stackrel{\alpha}{\longrightarrow} \ldots)$. Analogous to the filtration 
by degree in chain complexes, we obtain a filtration of the 
$R$-module $K_{\infty}(\alpha)$ by viewing it as 
$$\Sigma^{-1}(R[1/\alpha] \cup CR).$$

Next we define the stable Koszul spectrum for the sequence $\alpha_1, \ldots , \alpha_n$ by 
$$K_{\infty}(\alpha_1, \ldots , \alpha_n) = K_{\infty}(\alpha_1) \tensorR \cdots \tensorR K_{\infty}(\alpha_n),$$
and give it the tensor product filtration.

The topological analogue of Lemma \ref{Ksupp} states that if $\beta \in I$ then 
$$K_{\infty}(\alpha_1, \ldots , \alpha_n)[1/\beta ] \simeq 0;$$ 
this follows from Lemma \ref{Ksupp} and the spectral sequence (\ref{algtopss}) below.
We may now use precisely the same proof as in the algebraic case to conclude that the homotopy type of $K_{\infty}(\alpha_1, \ldots , \alpha_n)$ depends only on the radical of the ideal $I = (\alpha_1, \cdots , \alpha_n)$. We therefore write $K_{\infty}(I)$ for 
$K_{\infty}(\alpha_1, \ldots , \alpha_n)$.

\subsection{Localization and completion.}
\label{subsec:loccomp}
With motivation from Theorems \ref{loccohR} and \ref{lochomL}, we define the homotopical 
$I$-power torsion 
(or local cohomology) and homotopical completion (or local homology) modules associated 
to an $R$-module $M$ by
\begin{equation}
\Gamma_I(M)= K_{\infty}(I)\tensorR M \ \ \ \tand \ \ \ \Lambda_I(M)=M^{\wedge}_I=\HomR(K_{\infty}(I), M) . \end{equation} 
In particular, $\Gamma_I(R)=K_{\infty}(I)$.

Because the construction follows the algebra so precisely, it is easy give methods of calculation for the homotopy groups of these $R$-modules. We use the product of the filtrations of the $K_{\infty}(\alpha_i)$ given 
above and obtain spectral sequences
\begin{equation} \label{algtopss}
E^2_{s,t} = H^{-s,-t}_{I} (R_{* }; M_{* }) \Rightarrow \pi_{s+t} (\Gamma_IM) \end{equation} with differentials $d^r: E^r_{s,t} \rightarrow E^r_{s-r,t+r-1}$ and 
\begin{equation}
E_2^{s,t} = H_{-s,-t}^{I} (R^{* }; M^{* }) \Rightarrow \pi_{-(s+t)} (M^{\wedge}_I) \end{equation} with differentials $d_r: E_r^{s,t} \rightarrow E_r^{s+r,t-r+1}.$

\subsection{The \v{C}ech spectra.}
Similarly, we define the \v{C}ech spectrum by the cofibre sequence 
\begin{equation}\label{topKAC}
\fbox{$K_{\infty}(I) \darrow  R \darrow  \check{C}(I). $}
\end{equation}

\noindent We define the homotopical localization (or \v{C}ech 
cohomology) and \v{C}ech homology modules 
associated to an $R$-module $M$ by
\begin{equation}
M[I^{-1}]= \check{C}(I) \tensorR M \ \ \ \mbox{ and }\ \ \ 
 \Delta^I(M)=\HomR(\check{C}(I), M).
\end{equation}
In particular, $R[I^{-1}]= \check{C}(I)$. Once again, we have spectral sequences for 
calculating their homotopy groups from the analogous algebraic constructions.

\subsection{Basic properties.}
We can now give topological analogues of some basic pieces of algebra that we used in 
Section \ref{sec:loccoh}.  Recall that the algebraic Koszul complex $\FK{I}$ is a direct limit of 
unstable complexes $K_s^{\bullet}(I)$ that are finite complexes of free modules with 
homology annihilated by a power of $I$. We say 
that an $R$-module $M$ is a $I$-power torsion module if its $R_*$-module $M_*$ of homotopy groups 
is a $I$-power torsion module; equivalently, $M_*$ must have support over $I$. 

\begin{lemma} \label{HJcolim}
The $R$-module $\HFK{I}$ is a homotopy direct limit of finite $R$-modules 
$K_s(I)$, each of which has homotopy groups annihilated by some power of $I$. Therefore 
$\HFK{I}$ is a $I$-power torsion module.
\end{lemma}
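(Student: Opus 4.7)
The plan is to give explicit finite models $K_s(I)$ approximating $K_\infty(I)$, show the approximation is a homotopy colimit, and then control the homotopy of each $K_s(I)$.

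First I would set up the single-generator case. For $\alpha \in \pi_*R$, define the finite $R$-module $K_s(\alpha)$ as the fibre of the multiplication map $\alpha^s: R \lra \Sigma^{s|\alpha|}R$; this is a two-cell $R$-module and is the topological analogue of the unstable Koszul complex $K^\bullet_s(\alpha) = (R \stackrel{\alpha^s}\lra R)$. The telescope identification $R[1/\alpha] = \hocolim_s(R \stackrel{\alpha}\lra R \stackrel{\alpha}\lra \cdots)$ combined with the definition of $K_\infty(\alpha)$ as the fibre of $R \lra R[1/\alpha]$ lets one identify $K_\infty(\alpha) \simeq \hocolim_s K_s(\alpha)$, by commuting the homotopy colimit with the fibre sequence.

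Next I would pass to several generators. Define $K_s(I) := K_s(\alpha_1) \tensorR \cdots \tensorR K_s(\alpha_n)$. Since $\tensorR$ commutes with homotopy colimits in each variable, the identifications from the first step assemble to
$$K_\infty(I) \simeq \hocolim_s K_s(I),$$
with each $K_s(I)$ finite as an iterated smash of finite $R$-modules. This handles the ``homotopy direct limit of finite $R$-modules'' part of the statement.

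Then I would bound the annihilator of $\pi_*(K_s(I))$. From the cofibre sequence $K_s(\alpha) \lra R \stackrel{\alpha^s}\lra \Sigma^{s|\alpha|}R$, the long exact homotopy sequence gives a short exact sequence exhibiting $\pi_*(K_s(\alpha))$ as an extension of $\ker(\alpha^s)$ by a shift of $\mathrm{coker}(\alpha^s)$, both annihilated by $\alpha^s$; hence $\pi_*(K_s(\alpha))$ is annihilated by $\alpha^{2s}$. Applying this inductively across the $n$ smash factors, using the long exact sequence for $(\cdot) \tensorR K_s(\alpha_i)$ at each stage, shows that each $\alpha_i$ acts nilpotently on $\pi_*(K_s(I))$, so some power of $I$ annihilates $\pi_*(K_s(I))$.

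Finally, because $\pi_*$ commutes with filtered homotopy colimits, every element of $\pi_*(K_\infty(I))$ is in the image of $\pi_*(K_s(I))$ for some $s$ and is therefore killed by a power of $I$, proving $K_\infty(I)$ is an $I$-power torsion module. The main technical obstacle is the inductive annihilation statement in the last displayed step — one has to keep track of the powers of the $\alpha_i$ contributed at each smash stage — but this is a routine argument via the homotopy long exact sequence and does not require anything beyond the structure already set up.
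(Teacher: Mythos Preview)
Your proposal is correct and follows precisely the approach the paper intends: the paper states this lemma without a written proof, but the construction of the unstable Koszul complexes $K_s^\bullet(\alpha)$ in Appendix~A and the later remark ``As observed in the proof of Lemma~\ref{HJcolim}, we have $K_\infty(\alpha)=\hocolim_s \Sigma^{-1}R/\alpha^s$'' show that exactly your argument---identifying $K_\infty(\alpha)$ as the homotopy colimit of the fibres of $\alpha^s$, tensoring over $R$ for several generators, and reading off the annihilation from the long exact sequence---is what is meant. Your treatment of the annihilation step (getting $\alpha^{2s}$ rather than $\alpha^s$ on $\pi_*K_s(\alpha)$, then inducting across the smash factors) is the standard way to fill in the details the paper omits.
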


The following lemma is an analogue of the fact that $\FC{I}$ is a chain complex which 
is a finite sum of modules $R[1/\alpha ]$ for $\alpha \in I$. 

\begin{lemma} \label{CJfilt}
The $R$-module $\CJI$ has a finite filtration by $R$-submodules with subquotients that are suspensions of modules of the form $R[1/\alpha ]$ 
with $\alpha \in I$. 
\end{lemma}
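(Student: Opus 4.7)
I would prove this by induction on the number $n$ of generators of $I$. The base case $n=1$ is immediate: the defining cofibre sequence $K_\infty(\alpha_1)\to R\to R[1/\alpha_1]$ identifies $\check{C}(\alpha_1)$ with $R[1/\alpha_1]$, so the trivial one-step filtration does the job.

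For the inductive step, set $I'=(\alpha_1,\ldots,\alpha_{n-1})$, so that $K_\infty(I)=K_\infty(I')\tensorR K_\infty(\alpha_n)$. The natural augmentation $K_\infty(\alpha_n)\to R$ induces a map $K_\infty(I)\to K_\infty(I')$ which fits into a morphism from the defining cofibre sequence $K_\infty(I)\to R\to \check{C}(I)$ to $K_\infty(I')\to R\to \check{C}(I')$ that is the identity on the middle term. The cofibre of the left-hand vertical is $K_\infty(I')\tensorR R[1/\alpha_n]=K_\infty(I')[1/\alpha_n]$, so the octahedral axiom forces the cofibre of $\check{C}(I)\to \check{C}(I')$ to be $\Sigma K_\infty(I')[1/\alpha_n]$. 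Rotating gives the key cofibre sequence
$$K_\infty(I')[1/\alpha_n]\to \check{C}(I)\to \check{C}(I'),$$
which presents $\check{C}(I)$ as a two-step filtered $R$-module with top $\check{C}(I')$ and bottom $K_\infty(I')[1/\alpha_n]$.

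The top piece already carries, by induction, a finite filtration with subquotients suspensions of $R[1/\gamma]$ for $\gamma\in I'\subseteq I$. For the bottom piece, smash the defining cofibre sequence for $I'$ over $R$ with $R[1/\alpha_n]$ and rotate to obtain
$$\Sigma^{-1}\check{C}(I')[1/\alpha_n]\to K_\infty(I')[1/\alpha_n]\to R[1/\alpha_n],$$
a two-step filtration whose top $R[1/\alpha_n]$ is of the required form and whose bottom $\Sigma^{-1}\check{C}(I')[1/\alpha_n]$ inherits, by localizing the inductive filtration on $\check{C}(I')$ at $\alpha_n$ and shifting, a finite filtration with subquotients suspensions of $R[1/(\gamma\alpha_n)]$ for $\gamma\in I'$. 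Since each $\gamma\alpha_n$ lies in $I$, splicing the three filtrations together yields the desired finite filtration of $\check{C}(I)$.

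The only thing requiring care is the validity of the octahedral/$3\times 3$ manipulations and of the ``localize and rotate'' step in the triangulated homotopy category of $R$-module spectra; the latter relies on the standard fact that smashing with $R[1/\alpha_n]$ over $R$ is exact, since $R\to R[1/\alpha_n]$ is a smashing localization. No serious obstacle arises.
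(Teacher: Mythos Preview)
Your proof is correct. The octahedral step producing the cofibre sequence
\[
K_\infty(I')[1/\alpha_n]\lra \check{C}(I)\lra \check{C}(I')
\]
is valid, and the subsequent localisation and splicing go through as you say.

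The paper does not spell out a proof of this lemma; it is presented as the spectral transcription of the evident algebraic fact that the flat \v{C}ech complex $\check{C}^\bullet(I)$ is in each cohomological degree a finite direct sum of modules $R[1/\alpha_S]$, where $\alpha_S=\prod_{i\in S}\alpha_i$ for a nonempty subset $S\subseteq\{1,\ldots,n\}$. The intended argument is the one flagged a few lines earlier, where $K_\infty(I)$ is given the \emph{tensor product filtration}: each factor $K_\infty(\alpha_i)$ carries the obvious two-step filtration with subquotients $R$ and $\Sigma^{-1}R[1/\alpha_i]$, and the tensor product then has a finite filtration whose subquotients are $\Sigma^{-|S|}R[1/\alpha_S]$ indexed by subsets $S$. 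Passing to the cofibre $\check{C}(I)$ of the augmentation $K_\infty(I)\to R$ removes the $S=\emptyset$ piece and shifts once, leaving a finite filtration of $\check{C}(I)$ with subquotients $\Sigma^{1-|S|}R[1/\alpha_S]$ for $S\neq\emptyset$.

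Your induction unwinds exactly this structure, but recursively via the octahedral axiom rather than all at once. The paper's route is shorter and makes the subquotients explicit immediately (one sees precisely which elements $\alpha\in I$ arise, namely the products $\alpha_S$), whereas your approach has the virtue of making the triangulated bookkeeping completely explicit and would adapt readily to situations where one only has the recursive description of $K_\infty(I)$ to hand. Either way the content is the same.
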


These lemmas are useful in  combination.

\begin{cor} \label{comp}
If $M$ is a $I$-power torsion module
then $M \tensorR \CJI \simeq 0$; in particular $K_{\infty}(I) \tensorR \CJI \simeq 0$. 
\end{cor}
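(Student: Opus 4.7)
The plan is to reduce the statement to a computation on each filtration quotient of $\CJI$ using Lemma \ref{CJfilt}, and then to check that localizing an $I$-power torsion module at an element of $I$ yields zero.

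\textbf{Step 1 (Reduction via the \v{C}ech filtration).} By Lemma \ref{CJfilt}, $\CJI$ admits a finite filtration by $R$-submodules whose subquotients are suspensions $\Sigma^{n_i} R[1/\alpha_i]$ with $\alpha_i \in I$. Smashing with $M$ preserves cofibre sequences, so $M \tensorR \CJI$ acquires a finite filtration whose subquotients are $\Sigma^{n_i}(M \tensorR R[1/\alpha_i]) \simeq \Sigma^{n_i} M[1/\alpha_i]$. By induction on the filtration length, it suffices to show that $M[1/\alpha] \simeq 0$ for every $\alpha \in I$.

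\textbf{Step 2 (Killing the localizations).} Since $R[1/\alpha]$ is a homotopy colimit $\hocolim(R \xrightarrow{\alpha} R \xrightarrow{\alpha} \cdots)$, we have
$$\pi_*(M[1/\alpha]) \iso M_*[1/\alpha],$$
the algebraic localization of $M_*$ at $\alpha$. By hypothesis $M_*$ is an $I$-power torsion module, so every homogeneous element $m \in M_*$ is annihilated by some power $I^k$; in particular $\alpha^k m = 0$. Hence every element of $M_*$ is sent to zero in $M_*[1/\alpha]$, so $M_*[1/\alpha] = 0$ and $M[1/\alpha] \simeq 0$.

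\textbf{Step 3 (The particular case).} To deduce $\HFK{I} \tensorR \CJI \simeq 0$, we verify that $\HFK{I}$ is itself an $I$-power torsion $R$-module. By Lemma \ref{HJcolim}, $\HFK{I}$ is a homotopy direct limit of the finite $R$-modules $K_s(I)$, each of which has homotopy annihilated by some power of $I$. Passing to homotopy groups,
$$\pi_*(\HFK{I}) \iso \colim_s \pi_*(K_s(I))$$
is a filtered colimit of $I$-power torsion modules and is therefore itself $I$-power torsion. Applying the first part of the corollary with $M = \HFK{I}$ gives the conclusion.

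The argument is essentially a bookkeeping exercise; the only place requiring genuine attention is the identification of homotopy groups of the localization $M[1/\alpha]$ with the algebraic localization $M_*[1/\alpha]$, which follows because filtered homotopy colimits of $R$-modules commute with homotopy groups. No step presents a serious obstacle.
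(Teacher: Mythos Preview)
Your proof is correct and follows exactly the same approach as the paper: reduce via the filtration of Lemma~\ref{CJfilt} to showing $M[1/\alpha]\simeq 0$ for $\alpha\in I$, which holds since $M_*$ is $I$-power torsion. The paper's proof is a single sentence to this effect; you have simply unpacked it with more care, and your Step~3 makes explicit the appeal to Lemma~\ref{HJcolim} that the paper leaves implicit in the phrase ``in particular''.
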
 
\begin{proof} Since $M[1/\alpha ] \simeq 0$ for $\alpha \in I$, Lemma \ref{CJfilt} gives the 
conclusion for $M$. 
\end{proof}

\section{Completion at ideals and Bousfield localization} \label{seccompletionssht}

Bousfield localizations include both completions at ideals and localizations at 
multiplicatively 
closed sets, but one may view these Bousfield localizations as falling into the types 
typified by completion at $p$ and localization away from $p$. Thinking in terms of 
$Spec(R_*)$, 
this is best viewed as the distinction between localization at a closed set 
and localization at the complementary open subset. In this section 
we deal  with the closed sets and  with the open sets in Section \ref{sec:locaway}. 
This appendix  is based on \cite{GM, GML,Tateca}.

\subsection{Homotopical completion.}
\label{htpicalcompletion}
As observed in the proof of Lemma \ref{HJcolim}, we have 
$K_{\infty}(\alpha )=\hocolim_s\Sigma^{-1} R/\alpha^s$ and therefore 
$$M^{\wedge}_{(\alpha)} = \HomR(\hocolim_s \Sigma^{-1} R/\alpha^s , M) 
\simeq \holim_s M/\alpha^s. $$ 
If $I=( \alpha, \beta )$, then 
$$\begin{array}{rcl}
 M^{\wedge}_I &=& \HomR(K_{\infty}(\alpha) \tensorR  K_{\infty}(\beta), M)\\
              &=&\HomR(K_{\infty}(\alpha ), \HomR( K_{\infty}(\beta ), M))\\
              &=&(M_{(\beta) }^{\wedge})_{(\alpha)}^{\wedge},
\end{array}$$
and so on inductively. This should help justify the notation $M_I^{\wedge}=\HomR(K_{\infty}(I),M).$

When $R=\bbS$ is the sphere spectrum and $p \in \Z \cong \pi_0(\bbS)$, $K_{\infty}(p)$ is a Moore 
spectrum for $\Z / p^{\infty}$ in degree $-1$ and we recover the usual definition 
$$X^{\wedge}_p=F(\Sigma^{-1}\bbS /p^{\infty}, X)$$ 
of $p$-completions of spectra as a special case, where $F(A,B)=\Hom_{\bbS}(A,B)$ is the 
function spectrum. The standard short exact sequence 
for the calculation of the homotopy groups of $X^{\wedge}_p$ in terms of `Ext completion' 
and `Hom completion' follows directly from Corollary \ref{cmcase}.

Since $p$-completion has long been understood to be an example of a Bousfield localization, 
our next task is to show that completion at $I$ is a Bousfield localization in general.

\subsection{Bousfield's terminology.}
 Fix an $R$-module  $E$. A spectrum  $A$ is {\em $E$-acyclic} if $A  \tensorR  E \simeq 0$; 
a map $f:X \darrow  Y$ is an { \em $E$-equivalence} if its cofibre is $E$-acyclic.  
An $R$-module $M$  is {\em $E$-local} if $E \tensorR T \simeq 0$ 
implies  $\HomR (T, M) \simeq 0$. A map $Y \darrow  L_EY$ is a 
{\em Bousfield $E$-localization} of  $Y$ if it is an $E$-equivalence 
and $L_EY$   is  $E$-local. This means that $Y \darrow  L_EY$ is  terminal
among $E$-equivalences with domain $Y$, and the Bousfield localization is therefore unique 
if it exists. Similarly, we may replace the single spectrum $E$ by a class $\sE$ 
of objects $E$, and require the conditions hold for all such $E$ 

The following is a specialization of a change  of rings result to the
ring map $\bbS \lra R$.

\begin{lemma} \label{loclem}
Let ${\sE}$ be a class of $R$-modules. If an $R$-module $N$ is 
${\sE}$-local as an $R$-module, then it is ${\sE}$-local as an $\bbS$-module. 
\end{lemma}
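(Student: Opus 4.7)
The plan is to exploit the extension--restriction of scalars adjunction along the unit map $\bbS \lra R$. Concretely, for any $\bbS$-module $T$ there is a natural equivalence
$$\Hom_{\bbS}(T,N) \simeq \Hom_R(R \tensor_{\bbS} T, N),$$
valid because $N$ is an $R$-module and restriction of scalars is right adjoint to $R \tensor_{\bbS} (-)$.

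So suppose $N$ is $\sE$-local as an $R$-module, and let $T$ be an $\bbS$-module with $E \tensor_{\bbS} T \simeq 0$ for every $E \in \sE$. To conclude $\sE$-locality of $N$ as an $\bbS$-module, I need to show $\Hom_{\bbS}(T,N) \simeq 0$. By the adjunction above, it suffices to show $\Hom_R(R \tensor_{\bbS} T, N) \simeq 0$, and by the $\sE$-locality of $N$ as an $R$-module this reduces to checking that $R \tensor_{\bbS} T$ is $\sE$-acyclic as an $R$-module, i.e.\ that $E \tensor_R (R \tensor_{\bbS} T) \simeq 0$ for all $E \in \sE$.

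The last point is the only substantive step, but it is immediate from the associativity of the smash product:
$$E \tensor_R (R \tensor_{\bbS} T) \simeq (E \tensor_R R) \tensor_{\bbS} T \simeq E \tensor_{\bbS} T \simeq 0,$$
where the first equivalence uses that $R$ is the unit of $\tensor_R$ and the second is the hypothesis on $T$. There is no real obstacle here; the lemma is essentially a formal consequence of the change-of-rings adjunction combined with the fact that smashing over $R$ with a free module $R \tensor_{\bbS} T$ recovers the underlying $\bbS$-smash. The only thing to be slightly careful about is that the computation takes place in the derived sense, but since all the monoidal and adjunction structures in play are already derived by our standing conventions, no further cofibrant replacements are needed. \qqed
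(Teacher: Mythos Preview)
Your proof is correct and follows essentially the same approach as the paper: both use the change-of-rings adjunction $\Hom_{\bbS}(T,N)\simeq\Hom_R(R\tensor_{\bbS}T,N)$ together with the associativity identification $E\tensor_R(R\tensor_{\bbS}T)\simeq E\tensor_{\bbS}T$. The paper's version is simply more terse.
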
 
\begin{proof} If $E\sm T =E\tensor_{\bbS} T \htp *$ for all $E$, 
then $E\tensorR (R \tensor_{\bbS} T)\htp 0$ for all $E$ and therefore 
$F(T,N)=\Hom_{\bbS}(T,N) \htp \HomR(R \tensor_{\bbS} T,N) \htp 0$. 
\end{proof}

\subsection{Homotopical completion is a Bousfield localization.}
The  class that will concern us most is the class $I$-{\bf Tors} of finite $I$-power torsion $R$-modules $M$. Thus $M$ must be a finite cell $R$-module, and its $R_*$-module $M_*$ of homotopy groups must be a $I$-power 
torsion module. 

\begin{thm} \label{Bousclosed}
For any finitely generated ideal $I$ of $R_*$ the map $M \darrow  M_I^{\wedge}$ is 
Bousfield localization in the category of $R$-modules in each of the following 
equivalent senses:
\begin{enumerate}
\item[(i)] with respect to the $R$-module $\Gamma_I(R)=K_{\infty}(I)$. \item[(ii)] with respect to the class $I$-{\bf Tors} of finite $I$-power torsion $R$-modules. 
\item[(iii)] with respect to the $R$-module $K_s(I)$ for any $s \geq 1$. 
\end{enumerate} Furthermore, the homotopy groups of the completion are related to local homology groups 
by a spectral sequence 
$$E^2_{s,t}=H^I_{s,t}(M_*)\Longrightarrow \pi_{s+t}(M_I^{\wedge}).$$ 
If $R_*$ is Noetherian, the $E^2$ term consists of the left derived functors of $I$-adic completion: $H^I_s(M_*)=L_s^I(M_*)$. \end{thm}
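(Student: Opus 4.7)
The proof splits into three parts: identifying the three classes in (i)--(iii) as giving the same Bousfield localization, verifying the two defining conditions of a Bousfield localization for the completion map $M\to M_I^\wedge$, and then filtering $K_\infty(I)$ to obtain the spectral sequence. For the first part, each $K_s(I)$ is a finite cell $R$-module whose homotopy groups are annihilated by a power of $I$, so $K_s(I)\in I\text{-}\mathbf{Tors}$; conversely any $T\in I\text{-}\mathbf{Tors}$ with $T_*$ killed by $I^N$ is built by a short cellular induction from suspensions of $K_N(I)$. Since $K_\infty(I)$ is (up to suspension) a homotopy colimit of the $K_s(I)$, an $R$-module is acyclic against $K_\infty(I)$ iff it is acyclic against every $K_s(I)$ iff it is acyclic against every member of $I\text{-}\mathbf{Tors}$. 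The three candidate Bousfield localizations therefore coincide.

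For the two defining conditions, locality of $M_I^\wedge = \HomR(K_\infty(I), M)$ follows by adjunction: whenever $T \tensorR K_\infty(I) \simeq 0$,
$$\HomR(T, M_I^\wedge) \simeq \HomR(T\tensorR K_\infty(I), M) \simeq 0.$$
Applying $\HomR(-,M)$ to the cofibre sequence $K_\infty(I)\to R\to\check{C}(I)$ identifies the fibre of $M\to M_I^\wedge$ with $\Delta^I(M)=\HomR(\check{C}(I), M)$, and we must show it is $K_\infty(I)$-acyclic. Each $K_s(\alpha_i)=\mathrm{fib}(\alpha_i^s\colon R\to R)$ is Spanier--Whitehead self-dual up to a single suspension, so the same holds for $K_s(I)=K_s(\alpha_1)\tensorR\cdots\tensorR K_s(\alpha_n)$. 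Since $K_s(I)$ is small,
$$K_s(I)\tensorR\Delta^I(M) \simeq \HomR\bigl(D K_s(I)\tensorR\check{C}(I),\, M\bigr),$$
and $D K_s(I)\tensorR\check{C}(I)\simeq 0$ by Corollary~\ref{comp}, since $D K_s(I)$ is again a finite $I$-power torsion $R$-module. Taking the homotopy colimit in $s$ gives $K_\infty(I)\tensorR\Delta^I(M)\simeq 0$, so $M\to M_I^\wedge$ is a $K_\infty(I)$-equivalence, completing the Bousfield localization claim.

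For the spectral sequence, each factor $K_\infty(\alpha_i)$ sits in a two-step filtration coming from its defining fibre sequence $K_\infty(\alpha_i)\to R\to R[1/\alpha_i]$. The tensor product of these length-two filtrations is a finite filtration on $K_\infty(I)$ whose associated graded pieces are smash products of $R$'s and localizations $R[1/\alpha_{i_1}\cdots\alpha_{i_k}]$, exactly matching the terms of the projective \v{C}ech model $PK^\bullet_\infty(I)$. The induced filtration on $\HomR(K_\infty(I), M)$ yields a spectral sequence converging to $\pi_*(M_I^\wedge)$ whose $E^2$-term is the cohomology of $\HomR(PK^\bullet_\infty(I), M)$, which is by definition the local homology $H^I_{*,*}(M_*)$. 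Under the Noetherian hypothesis, Theorem~\ref{lochomL} identifies $H^I_s(M_*)=L^I_s(M_*)$.

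The main technical obstacle is the $K_\infty(I)$-acyclicity of $\Delta^I(M)$: this genuinely relies on combining the Spanier--Whitehead self-duality of the finite Koszul modules $K_s(I)$ with the annihilation result Corollary~\ref{comp}, and on care in transferring the conclusion from the small approximations $K_s(I)$ to the non-small homotopy colimit $K_\infty(I)$ via the smallness of each $K_s(I)$ against $\Hom$-into-$M$.
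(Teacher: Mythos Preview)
Your argument for part (i)---locality of $M_I^\wedge$ via adjunction, and $K_\infty(I)$-acyclicity of the fibre $\Delta^I(M)$ by combining the (self-)duality of the small modules $K_s(I)$ with Corollary~\ref{comp}---is precisely the paper's proof, and the spectral sequence construction is the standard one the paper has already set up.

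The one weak point is your treatment of the equivalence of (i)--(iii). You try to show the three classes have identical acyclics, but two steps are not justified. First, the assertion that every $T\in I\text{-}\mathbf{Tors}$ with $I^N T_*=0$ is finitely built from suspensions of $K_N(I)$ is not argued and need not hold: $\alpha^N$ can act as zero on $T_*$ without the map $\alpha^N\colon T\to T$ being nullhomotopic, so the obvious cellular induction does not start. Second, your chain of iffs only compares $K_\infty(I)$-acyclicity with acyclicity against \emph{every} $K_s(I)$; this does not yet address (iii), which fixes a \emph{single} $s$. The paper handles (ii) and (iii) differently: it proves each directly by the same template as (i), noting for (iii) the cofibre sequence $R/\alpha^s\to R/\alpha^{2s}\to R/\alpha^s$, so that $K_s(I)$ finitely builds every $K_{js}(I)$ and hence builds $K_\infty(I)$ in the limit. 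If you prefer to compare acyclics, the clean route is: any $T\in I\text{-}\mathbf{Tors}$ satisfies $T\simeq T\tensorR K_\infty(I)$ by Corollary~\ref{comp}, so $K_\infty(I)$-acyclicity immediately gives $T$-acyclicity; combining this with the cofibre sequence above and the fact that $K_s(I)\in I\text{-}\mathbf{Tors}$ closes the loop without the unproven building claim.
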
 
\begin{proof}  We begin with (i). Since 
$$\HomR(T, M_I^{\wedge}) \simeq \HomR(T \tensorR K_{\infty}(I), M),$$
it is immediate that $M_I^{\wedge}$  is $K_{\infty}(I)$-local. We must prove that the map $M \darrow  M_I^{\wedge}$ is a $K_{\infty}(I)$-equivalence. The fibre of this map is $\Hom_R (\CJI , M)$, so we must show that  
$$\Hom_R (\CJI , M) \tensorR K_{\infty}(I) \simeq 0.$$
By Lemma \ref{HJcolim}, $K_{\infty}(I)$ is a homotopy direct limit of terms $K_s(I)$. Each $K_s(I)$ is in $I$-{\bf Tors}, and we see by their definition in terms of cofibre sequences and smash products that their duals 
$DK_s(I)$ are also in $I$-{\bf Tors}, where $DM=\HomR(M,R)$. 
Since $K_s(I)$ is a finite cell $R$-module,
$$\HomR(\CJI , M) \tensor_R K_s(I)= \HomR( \CJI \tensorR DK_s(I),M),$$ and $\CJI \tensorR DK_s(I) \simeq 0 $ by Corollary \ref{comp}. Parts (ii) and 
(iii) are similar but simpler. For (iii), observe that we have a cofibre sequence 
$R/\alpha^s \darrow  R/\alpha^{2s} \darrow  R/\alpha^s$, so that all of the $K_{js}(I)$ may be constructed from $K_s(I)$ using a finite number of cofibre sequences. 
\end{proof}

\section{Localization away from ideals and Bousfield localization}
\label{sec:locaway}

In this section we turn to localization away from the closed set defined
by an ideal $I$. First, observe that, when $I=(\alpha)$, 
$M[I^{-1}]$ is just $R[\alpha^{-1}]\tensor_R M = M[\alpha^{-1}]$. However, the higher 
\v{C}ech cohomology groups give the construction for general finitely generated ideals 
a quite different algebraic flavour, and $M[I^{-1}]$ is rarely a localization 
of $M$ at a multiplicatively closed subset of $R_*$. 
This appendix is based on \cite{Tateca}.

\subsection{The \v{C}ech complex as a Bousfield localization.}
To characterize this construction as a Bousfield 
localization, we consider the class $I$-{\bf Inv} of $R$-modules $M$ for which there is 
an element $\alpha \in I$ such that $\alpha :M \darrow M$ is an equivalence. 

\begin{thm}
For any finitely generated ideal $I=(\alpha_1,\ldots,\alpha_n)$ of $R_*$, the map $M \darrow M[I^{-1}]$ is Bousfield localization in the category of $R$-modules in each of the following equivalent senses: \begin{enumerate} \item[(i)] with respect to the $R$-module $R[I^{-1}]=\CJI$. \item[(ii)] with respect to the class  $I$-{\bf Inv}. \item[(iii)] with respect to the set $\{ R[1/\alpha_1 ], \ldots , R[1/\alpha_n] \}$. \end{enumerate} Furthermore, the homotopy groups of the localization are related to \v{C}ech cohomology 
groups by a spectral sequence
$$E_{s,t}^2 = \check{C}H_I^{-s,-t}(M_*) \Longrightarrow \pi_{s+t}(\CIM ).$$ If $R_*$ is Noetherian, the $E^2$ term can be viewed as the cohomology of
$Spec(R_*) \setminus V(I)$ with coefficients in the sheaf associated to $M_*$. 
\end{thm}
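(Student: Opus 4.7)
The plan is to mirror the argument for the closed-set version (Theorem \ref{Bousclosed}), with the roles of $\Gamma_I$ and the localization $\check{C}(I)\otimes_R M$ interchanged. First I would verify that the three proposed classes pick out the same class of acyclic $R$-modules, so that a map is an equivalence in sense (i), (ii) or (iii) precisely when it is an equivalence in each of the others; it then follows formally that the classes of local objects coincide as well, and hence the three candidate Bousfield localizations are the same. For the acyclics: each $R[1/\alpha_i]$ lies in $I$-\textbf{Inv}, and Lemma \ref{CJfilt} exhibits $\check{C}(I)$ as finitely built from modules $R[1/\beta]$ with $\beta\in I$, giving two of the three inclusions. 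Conversely, any $\check{C}(I)$-acyclic $T$ satisfies $T\simeq T\otimes_R K_\infty(I)$, so the topological form of Lemma \ref{Ksupp} ($K_\infty(I)[1/\alpha]\simeq 0$ for $\alpha\in I$) forces $T[1/\alpha]\simeq 0$ for every such $\alpha$, and hence $T\otimes_R N\simeq 0$ for every $N\in I\text{-}\mathbf{Inv}$.

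Next, I would check that $M\to M[I^{-1}]$ is a $\check{C}(I)$-equivalence: its fibre is $\Gamma_I(M)=K_\infty(I)\otimes_R M$, and Corollary \ref{comp} gives $K_\infty(I)\otimes_R\check{C}(I)\simeq 0$. To see that $M[I^{-1}]$ is $\check{C}(I)$-local, the key observation is that $\check{C}(I)\otimes_R(-)$ is smashing and idempotent: smashing the defining cofibre sequence $K_\infty(I)\to R\to\check{C}(I)$ with $\check{C}(I)$ and invoking Corollary \ref{comp} yields $\check{C}(I)\otimes_R\check{C}(I)\simeq\check{C}(I)$. Locality then follows by applying Lemma \ref{CJfilt} to $M[I^{-1}]$ and observing that on each filtration subquotient $\Sigma^j N[1/\beta]$ with $\beta\in I$, the element $\beta$ acts invertibly, so
$$\Hom_R(T, N[1/\beta])\simeq \Hom_R(T[1/\beta], N[1/\beta])\simeq 0$$
whenever $T$ is $\check{C}(I)$-acyclic (since then $T[1/\beta]\simeq 0$ by the first paragraph).

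Finally, the spectral sequence is obtained from the filtration on $M[I^{-1}]=\check{C}(I)\otimes_R M$ inherited from the tensor product of the stable Koszul filtrations on each $K_\infty(\alpha_i)$; identifying the $E^2$-page with the cohomology of the algebraic \v{C}ech complex $\FC{I}\otimes M_*$ is then formal, since applying homotopy groups to the filtered topological \v{C}ech complex reproduces the algebraic one termwise. Under the Noetherian hypothesis, the sheaf-cohomology interpretation of $E^2$ is the statement already recalled in Subsection \ref{subsec:Cech}.

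The trickiest point, in my view, is the locality of $M[I^{-1}]$. Because the modules $R[1/\beta]$ are not small over $R$, one cannot directly dualise small approximations as in the completion case; the argument has to exploit the fact that inverting an already-invertible $\beta$ is trivial (i.e.\ $\Hom_R(R[1/\beta],Y)\simeq Y$ when $\beta$ acts invertibly on $Y$) in order to translate $\check{C}(I)$-acyclicity of $T$ into vanishing of Hom-spectra into the filtration subquotients of $M[I^{-1}]$.
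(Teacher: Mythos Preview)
The paper omits a proof of this theorem, presumably leaving it to the reader as the open-set analogue of Theorem \ref{Bousclosed}. Your argument is the natural one and is essentially correct: you verify that the three acyclic classes agree, check that the map is an equivalence in the appropriate sense via Corollary \ref{comp}, and prove locality by filtering $M[I^{-1}]$ using Lemma \ref{CJfilt}.

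Two small remarks. First, in the locality argument your displayed isomorphism $\Hom_R(T,N[1/\beta])\simeq\Hom_R(T[1/\beta],N[1/\beta])$ is justified by writing $T[1/\beta]$ as a sequential homotopy colimit along $\beta$ and using that $\beta$ acts invertibly on the target; alternatively, one can pass through the extension--restriction adjunction for $R\to R[1/\beta]$. Either way the conclusion stands. Second, in showing that (iii)-acyclic implies (i)-acyclic you should note that the filtration subquotients of $\check C(I)$ have the form $R[1/(\alpha_{i_1}\cdots\alpha_{i_k})]$, and that $T[1/\alpha_i]\simeq 0$ for all $i$ forces $T[1/(\alpha_{i_1}\cdots\alpha_{i_k})]\simeq 0$ since inverting a product inverts each factor. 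You gesture at this but it is worth making explicit. With these points noted, the proof is complete and is exactly what the paper would expect.
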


\begin{remark}
\label{rk:cellular}
One may also characterize the map $\Gamma_I (M) \lra M$ by a universal property
analogous to that of the cellular approximation in spaces: it is the
{\em $K_1(I)$-cellularization} of $M$.

Indeed, on the one hand, $\Gamma_I(M)$ is constructed from $K_1(I)$ by \ref{HJcolim},  
and on the other hand, the map induces an equivalence of $\HomR (K_1(I), \cdot )$
since, by Lemma \ref{CJfilt}, 
$\HomR(K_1(I), \CJI )\simeq 0.$
\end{remark}

\end{document}